\theoremstyle{plain}
\newtheorem{theorem}{Theorem}[section]
\newtheorem{lemma}[theorem]{Lemma}
\newtheorem{corollary}[theorem]{Corollary}
\newtheorem{proposition}[theorem]{Proposition}
\theoremstyle{definition}
\newtheorem{definition}[theorem]{Definition}
\newtheorem{example}[theorem]{Example}
\theoremstyle{remark}
\newtheorem{remark}[theorem]{Remark}
\newcommand{\cE}{\mathcal{E}}
\newcommand{\cF}{\mathcal{F}}
\newcommand{\cO}{\mathcal{O}}
\newcommand{\cV}{\mathcal{V}}
\newcommand{\cW}{\mathcal{W}}
\newcommand{\bV}{\mathbf{V}}
\newcommand{\bL}{{\bf L}}
\newcommand{\hA}{\hat{\mathbf{A}}}
\newcommand{\reals}{\mathbb{R}}
\newcommand{\complexs}{\mathbb{C}}
\newcommand{\integers}{\mathbb{Z}}
\newcommand{\rationals}{\mathbb{Q}}
\DeclareMathOperator{\id}{id}
\newcommand{\boundary}[1]{\partial#1}
\newcommand{\abs}[1]{\left\lvert#1\right\rvert} 
\newcommand{\tensor}{\otimes}
\newcommand{\iso}{\cong}
\newcommand{\disjointunion}{\amalg}
\DeclareMathOperator{\Twist}{\mathfrak{Twist}}
\DeclareMathOperator{\Curv}{Curv}
\DeclareMathOperator{\Fred}{Fred}
\DeclareMathOperator{\im}{im}      
\DeclareMathOperator{\End}{End}    
\DeclareMathOperator{\Hom}{Hom}    
\DeclareMathOperator{\tr}{tr}
\DeclareMathOperator{\pr}{pr}
\DeclareMathOperator{\ind}{ind}
\DeclareMathOperator{\ch}{ch}
\DeclareMathOperator{\spinc}{spin^c}
\DeclareMathOperator{\Rham}{Rham}
\DeclareMathOperator{\fla}{flat}
\newcommand{\forget}[1]{}
\global\let\c@equation=\c@theorem}
\begin{document}
\pagestyle{myheadings}
\markboth{Ulrich Bunke and Thomas Schick}{Differential K-theory. A survey.}


\title{Differential K-theory. A survey.}

\author{
 Ulrich Bunke\thanks{\protect\href{ulrich.bunke@mathematik.uni-regensburg.de}{ulrich.bunke@mathematik.uni-regensburg.de}} \\
Johannes-Kepler-Universit{\"a}t Regensburg\\
Deutschland
\and 
Thomas Schick\thanks{
\protect\href{mailto:schick@uni-math.gwdg.de}{e-mail:
  schick@uni-math.gwdg.de}
\protect\\
\protect\href{http://www.uni-math.gwdg.de/schick}{www:~http://www.uni-math.gwdg.de/schick}
\protect\\
Fax: ++49 -551/39 2985
}\\
Georg-August-Universit\"at G{\"o}ttingen\\
Deutschland}
\maketitle

\begin{abstract}
  Generalized differential cohomology theories, in particular differential
  K-theory (often called ``smooth K-theory''), are becoming an important
  tool in differential geometry and in mathematical physics.

  In this survey, we describe the developments of the recent decades in this
  area. In particular, we discuss axiomatic characterizations of
  differential K-theory (and that these uniquely characterize differential
  K-theory). We describe several explicit constructions, 
  based on vector bundles, on families of differential operators, or using
  homotopy theory and classifying spaces. We explain the most important
  properties, in particular about the multiplicative structure and
  push-forward maps and will state versions of the Riemann-Roch theorem and of
  Atiyah-Singer family index theorem for differential K-theory.

\end{abstract}

\tableofcontents
\section{Introduction}

  The most classical differential cohomology theory is ordinary differential
  cohomology \textcolor{black}{
   with integer coefficients. It has various realizations, 
  e.g.~as
  smooth Deligne cohomology 
 (compare \cite{MR1197353}) or as Cheeger-Simons differential characters
 \cite{MR827262}.} In the last decade, \textcolor{black}{differential extensions of}
  generalized cohomology theories, in particular of K-theory, have been
  studied {intensively}. In part, this is motivated by its
  application in mathematical physics, for the description of fields with
  quantization anomalies in abelian gauge theories, suggested by Freed
  in \cite{MR1919425}, compare also \cite{MR2192936}.

  The basic idea is that a differential cohomology theory should combine
  cohomological information with differential form information. More
  precisely, given a generalized cohomology theory $E$ 
  \textcolor{black}{together with a natural transformation}
  $\ch\colon E(X)\to H(X;N)$,
   to cohomology with {coefficients in a graded real vector
     space $N$}, \textcolor{black}{and using an appropriate setup} one can define the 
  differential refinement $\hat E$ of $E$ as a \emph{homotopy} pullback
  \begin{equation*}
    \begin{CD}
      \hat E(X) @>I>> E(X)\\
      @VV{R}V @VV{\ch}V\\
      \Omega_{d=0}(X;N) @>{\Rham}>> H(X;N).
    \end{CD}
  \end{equation*}
  The natural transformations $I$ (the underlying cohomology class) and $R$
  (the characteristic closed differential form) are essential parts of the
  picture. With slight abuse of notation, we call $R$ the \emph{curvature}
  homomorphism. This is a bit of a misnomer, as in a geometric situation $R$
  will be determined by the honest curvature, but not vice versa. $\hat E$ is
  \emph{not} a generalized cohomology theory and not 
  meant to be one: it contains differential form information and as a
  consequence is not homotopy invariant.

  If $E$ is ordinary integral cohomology, $\ch$ is just induced by the
  inclusion of coefficients $\integers\to\reals$. For K-theory, the situation
  we are mainly \textcolor{black}{discussing} in this article, $\ch$ is the ordinary Chern character. 

  The \emph{flat part} $\hat E_{\fla}(X)$ of $\hat E(X)$ is \textcolor{black}{defined as} the kernel of the
  curvature morphism: 
$$\hat E_{\fla}(X):=\ker\left(R\colon \hat E(X)\to
    \Omega(X;{N})\right)\ .$$
 It turns out that $\hat E_{\fla}(X)$ is a cohomology
    theory, usually just $E\reals/\integers[-1]$, the generalized cohomology with
    $\reals/\integers$-coefficients with a degree shift: $\hat
    E^k_{\fla}(X)=E\reals/\integers^{k-1}(X)$. An original interest in
    differential K-theory (before it even was introduced as such) was its
    role as a 
    geometric model for $K\reals/\integers$. Karoubi in \cite[Section
    7.5]{MR913964} defined $K^{-1}\complexs/\integers$ using
    essentially the flat part of a cycle model for $\hat K^0$, compare also
 Lott \cite[Definition 5, Definition 7]{MR1312690} where also
 $K\reals/\integers^{-1}$ is introduced. Homotopy theory provides a
 universal construction of $E\reals/\integers$ for a generalized cohomology
 theory $E$. However, this is in general hard to combine with geometry.

  K-theory is the home for index theory. The differential K-theory (in
  particular its different cycle models) and also its flat part naturally are
  the home for index problems, taking more of the geometry into
  account. Indeed, in suitable models it is built into the definitions that
  geometric families of Dirac operators, parameterized by $X$, give rise to
  classes in $\hat K^*(X)$, where $*$ is the parity of the dimension of the
  fiber. A submersion $p\colon X\to Z$ with closed fibers with fiberwise
  geometric $\spinc$-structure (the precise meaning of geometry will be
  discussed below) is oriented for differential K-theory and one has an
  associated push-forward 
$$\hat p_!\colon \hat K^*(X)\to \hat K^{*-d}(Y)$$ 
(with
  $d=(\dim(X)-\dim(Z))$). The same data also gives rise to a push-forward in
  Deligne cohomology 
$$\hat p_!\colon \hat H^*(X)\to \hat H^{*-d}(Y)\ .$$
 There is
  a unique lift of the Chern character to a natural transformation
  $$\hat\ch\colon \hat K^*(X)\to \hat H_\rationals^{*+2\integers}(X)\ .$$
 Here, the right hand side is the differential extension of
$H^*(X;\rationals)$ and one of the main results of \cite{bunke-2007} 
\textcolor{black}{
is a refinement of the classical  Riemann-Roch theorem to a 
differential Riemann-Roch theorem which identifies the correction for the
compatibility of $\hat \ch$ with the push-forwards.} In \cite{freed10:_index_theor_in_differ_k_theor}, Bismut
 superconnection techniques are used to define the analytic index of a
 geometric family: it can be understood as a particular representative of the
 differential K-theory class of a geometric family as above, determined by the
 analytic solution of the index problem. Moreover, they develop a geometric
 refinement of the topological index construction of Atiyah-Singer
 \cite{MR0236950} based on geometrically controlled embeddings into Euclidean
 space (which does not require deep spectral analysis) and prove that
 topological and analytic index in differential K-theory coincide.

Finally, we observe that, in suitable special situations, we can easily
construct classes in differential K-theory which turn out not to depend on the
special geometry, but only on the underlying differential-topological
data. Typically, these live in the flat part of differential K-theory and are
certain (generalizations of) secondary index invariants. Examples are
rho-invariants of the Dirac operator twisted with two flat vector bundles (and
family versions hereof), or the $\integers/k\integers$-index of Lott
\cite{MR1312690} for a manifold $W$ whose boundary is identified with the
disjoint union of $k$ copies of a given manifold $M$.

\bigskip
  Similar to smooth Deligne cohomology, there is a counterpart of differential
  K-theory in the holomorphic setting \cite{MR1043268} and there is an
  arithmetic Riemann-Roch for these groups. This, however, will not be
  discussed in this survey.

\subsection{Differential cohomology and physics}
\label{sec:diff_cohom_and_physcis}

 A motivation for the introduction of differential K-theory comes from quantum
 physics. The fields \textcolor{black}{of abelian gauge theories are
 described by objects which carry the local field strength  information of a 
 closed differential form (assuming that there are no sources). Dirac
 quantization, however, requires that their de 
 Rham classes lie in an integral lattice in de Rham 
 cohomology. }For Maxwell theory \textcolor{black}{the field strength} simply is a $2$-form which is the
 curvature of a complex line bundle and therefore lies in the image of
 ordinary integral cohomology. For Ramond-Ramond fields in type II string
 theories it is a differential form of higher degree which lies in the image
 under the Chern character of K-theory, as suggested by
 \cite{Moore-Witten,freed_hopkins}. Indeed, Witten suggests that D-brane
 charges in the low energy limit of type IIA/B superstring theory are
 classified by K-theory. In this case,
 even if the field strength differential form is
 zero, the fields or D-brane charges can contain some \emph{global}
 information, corresponding to torsion in K-theory.

 It is suggested by
 Freed \cite{MR1919425} that these Ramond-Ramond fields are described by
 classes in 
\emph{ differential} K-theory (or other generalized differential cohomology
theories,
 depending on the particular physical model). Given a space-time background
 $X$ and a field represented by a
 class $F\in \hat E^*(X)$, this field contains the differential form
 information $R(F)$ (as expected for an abelian gauge
 field). The field equations (generalizing Maxwell's equations) require
 that $dR(F)=0$ if there are no {sources} (which we assume here). However,
 there is a quantization condition: the de Rham class represented by $R(F)$ is
 not arbitrary, but lies in an integral lattice, namely in $\im(ch)$. Indeed,
 $F$ also contains the integral (and possibly torsion) information of the
 class $I(F)\in E^*(X)$. Finally, even $I(F)$ and $R(F)$ together don't
 determine $F$ entirely, there is extra information,
 corresponding to a physically significant potential or holonomy. More
 precisely, $\hat E^*(X)$ is the configuration space with a gauge group
 action. 
 Details of such a gauge field theory are studied e.g.~in \cite{MR2536859},
 where 
 it is shown that the free part of $E^*(X)$ is an obstruction to a global
 gauge fixing. Nonetheless, \cite{MR2536859} proposes a partition function
 and among others computes the vacuum expectation value. 

All discussed so far describes the situation without any background
field or flux. However, such background fields are an important ingredient of
the 
theory. Depending on the chosen model and the precise situation, a
background field can be defined in many different ways. In the classical
situation where fields are just given by differential forms, a background
field is a closed $3$-form $\Omega$. It creates an extra term in the field
equations. Correspondingly, the relevant charges are even or odd forms
(depending on the type of the theory) which are closed for the differential
$d^\Omega$ with $d^\Omega\omega:= d\omega+\omega\wedge\Omega$. And they are
classified up to equivalence by the \emph{$\Omega$-twisted de Rham cohomology}
\begin{equation*}
H_{dR}^{*+\Omega}(X):=\ker(d^\Omega)/\im(d^\Omega).
\end{equation*}

When looking at charges in the presence of a background B-field (producing an
H-flux) which are
classified topologically by K-theory, we need to work with
\emph{twisted K-theory}, compare in particular
\cite{MR1674715,MR1756434}. We will give a short introduction to twisted
K-theory in Section 
\ref{sec:twisted_ordinary_K_theory}. The role of twisted K-theory is discussed
a lot in the case of T-duality. T-duality predicts an isomorphism of string
theories on \textcolor{black}{different} background manifolds {which are T-dual to each other}, and in particular an isomorphism of
the K-theory groups which classify the D-brane charges.

It turns out, however, that the topology of one of the partners in duality
dictates a background B-field on the other, and the required isomorphism can
only hold in twisted K-theory,
compare e.g.~\cite{MR2080959,MR2130624}. In those papers, mainly the
\emph{topological} classification of D-brane charges is considered. A new
picture now arises when one wants to move to T-duality for Ramond-Ramond
fields described by \emph{differential K-theory} as explained above. One has
to construct and study \emph{twisted} differential K-theory. A first step
toward this is carried out in \cite{MR2518992}. Now, physicists
try to understand T-duality at the level of Ramond-Ramond fields, compare
e.g.~\cite{MR2628876} where the ideas are discussed explicitly without
mathematical rigor. With mathematical rigor, the T-duality isomorphism in
(twisted) differential K-theory has
been worked out by Kahle and Valentino in \cite{kahle-2009}. We will
describe these results in more detail in Section \ref{sec:diff_T_duality}.

\section{Axioms for differential cohomology}
\label{sec:axioms}

A fruitful approach to generalized cohomology theories is based on the
Eilenberg-Steenrod axioms. It turns out that many of the basic properties of
smooth Deligne cohomology and differential K-theory also are captured by a
rather small set of axioms, proposed in \cite[Section
1.2.2]{bunke-2007} (and motivated by \cite{MR1919425}). Therefore, we want to
base our treatment of differential 
K-theory on those axioms, as well.

The starting point is a generalized cohomology theory $E$, together with a
natural transformation $\ch\colon E(X)\to H(X;N)$, where $N$ is a graded
coefficient $\reals$-vector space. The two basic examples are
\begin{itemize}\item 
  $E(X)=H(X;\integers)$, ordinary cohomology with integer coefficients, where
  $N=\reals$ and $\ch$ is induced by the inclusion of coefficients
  $\integers\to\reals$. More generally, $\integers$ can be replaced by any
  subring of $\reals$, e.g.~by $\rationals$.
\item $E(X)=K(X)$, K-theory, where $\ch$ is the usual Chern character, and
  $N=\reals[u,u^{-1}]$ with $u$ of degree {$2$}. Multiplication with $u$
  corresponds to Bott periodicity. 
\end{itemize}

\begin{definition}\label{def:axioms}
  A differential extension of the pair $(E,\ch)$ is a functor $X\to\hat E(X)$ from the
  category of compact smooth manifolds (possibly with boundary) to
  $\integers$-graded groups together with natural transformations
  \begin{enumerate}
  \item $R\colon \hat E(X)\to \Omega_{d=0}(E;N)$ (curvature)
  \item $I\colon \hat E(X)\to E(X)$ (underlying cohomology class)
  \item $a\colon \Omega(X;N)/\im(d)\to \hat E(X)$ (action of forms)\ .
  \end{enumerate}
   Here $\Omega(E;N):=\Omega(E)\tensor_{\reals}N$\footnote{{This definition has to be modified in a generalization to non-compact manifolds!}} denote the smooth
  differential forms with values in $N$, $d\colon
  \Omega(E;N)\to\Omega(E;N)[1]$ the usual de Rham differential and
  $\Omega_{d=0}(E;N)$ the space of closed differential forms ($[1]$ stands for
  degree-shift by $1$).

  The transformations $I,a,R$ are required to satisfy the following axioms:
  \begin{enumerate}
  \item The following diagram commutes
\begin{equation}\label{eq:main_diagr}
\begin{CD}
  \hat E(X) @>I>> E(X)\\
  @VV{R}V    @VV{\ch}V\\
      \Omega_{d=0}(X,N) @>{\Rham}>> H(X;N)\; .
\end{CD} 
\end{equation}
\item\begin{equation}\label{drgl} R\circ a=d\ .
  \end{equation}
\item $a$ is of degree $1$.
\item The following sequence is exact:
  \begin{equation}\label{exax}
    E^{*-1}(X)\xrightarrow{ch} \Omega^{*-1}(X,N)/\im(d)\xrightarrow{a} \hat
    E^*(X)\xrightarrow{I} E^*(X)\to 0\ . 
  \end{equation}
\end{enumerate}
\end{definition}

Alternatively, when dealing with K-theory one can
  and often (e.g.~in \cite{bunke-2007}) does consider the whole theory as
  $\integers/2\integers$-graded with the obvious adjustments. Note that with
  $N=K^*(pt)\tensor\reals=\reals[u,u^{-1}]$ we have natural and canonical
  isomorphism $\Omega^*(X;N)=\oplus_{k\in \integers} \Omega^{*+2k}(X)$ and
  $H^*(X;N)=\oplus_{k\in\integers} H^{*+2k}(X;\reals)$. The associated
  $\integers/2\integers$-graded ordinary cohomology is therefore given by the
  direct sum of even or odd degree forms.

  \begin{corollary}
    If $\hat E$ is a differential extension of $(E,\ch)$, then we have a
    second exact sequence
    \begin{equation}\label{eq:les_R_surj}
  E^{*-1}(X)\xrightarrow{\ch}  H^{*-1}(X;N)  \xrightarrow{a}\hat E^*(X) \xrightarrow{R} \Omega^*_{d=0}(X;N)\times_{\ch} E^*(X)\to 0,
    \end{equation}
    where $\Omega_{d=0}(X;N)\times_{\ch} E(X)=\{(\omega,x)\mid
    \Rham(\omega)=\ch(x)\}$ is the pullback of abelian groups.
  \end{corollary}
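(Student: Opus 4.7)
The plan is to deduce the new exact sequence directly from Definition~\ref{def:axioms}, using the four axioms as the sole input. Note that the map labelled $R$ in the corollary actually denotes the pair $\hat x\mapsto (R(\hat x), I(\hat x))$, which lands in the fibre product by the commutativity of diagram~\eqref{eq:main_diagr}; and the map labelled $a$ from $H^{*-1}(X;N)$ is the restriction of $a$ along the inclusion of closed forms mod exact $H^{*-1}(X;N)\hookrightarrow \Omega^{*-1}(X;N)/\im(d)$. Likewise, in~\eqref{exax} the image of $\ch$ a priori lies in $H^{*-1}(X;N)\subset \Omega^{*-1}(X;N)/\im(d)$, which allows us to reinterpret $\ch$ here as landing in $H^{*-1}(X;N)$.

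I would prove surjectivity of $(R,I)$ first. Given $(\omega,x)$ in the pullback, by~\eqref{exax} lift $x$ to some $\hat x'\in\hat E^*(X)$; the difference $\omega-R(\hat x')$ is closed and by~\eqref{eq:main_diagr} has vanishing de Rham class, hence equals $d\alpha$ for some $\alpha\in\Omega^{*-1}(X;N)$. Then $\hat x:=\hat x'+a(\alpha)$ satisfies $R(\hat x)=\omega$ by axiom~\eqref{drgl} and $I(\hat x)=x$ by the exactness in~\eqref{exax}.

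For exactness at $\hat E^*(X)$: if $(R,I)(\hat x)=(0,0)$, then $I(\hat x)=0$ gives $\hat x=a(\beta)$ for some $\beta\in\Omega^{*-1}(X;N)/\im(d)$ by~\eqref{exax}, and $R(\hat x)=d\beta=0$ forces $\beta$ to be closed, so $\beta$ represents a class in $H^{*-1}(X;N)$ mapping to $\hat x$. The reverse inclusion is immediate: for closed $\beta$ we have $R(a(\beta))=d\beta=0$ and $I(a(\beta))=0$ from~\eqref{exax}.

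For exactness at $H^{*-1}(X;N)$: if a closed form $\alpha$ satisfies $a(\alpha)=0$ in $\hat E^*(X)$, then~\eqref{exax} provides $y\in E^{*-1}(X)$ with $\ch(y)=\alpha$ in $\Omega^{*-1}(X;N)/\im(d)$; since both sides are closed, this equality descends to $H^{*-1}(X;N)$, so $[\alpha]=\ch(y)$. The converse composition $a\circ\ch=0$ is a direct consequence of~\eqref{exax}. No step is genuinely hard; the only delicate point is the bookkeeping between $H^{*-1}(X;N)$ and $\Omega^{*-1}(X;N)/\im(d)$ and the implicit re-packaging of $(R,I)$ into one arrow, both of which need to be made explicit to avoid circularity.
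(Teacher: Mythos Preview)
Your proof is correct and follows exactly the route the paper indicates: the paper's proof consists of the single sentence ``This is a direct consequence of \eqref{exax}, \eqref{eq:main_diagr} and \eqref{drgl},'' and you have simply unpacked that sentence, verifying exactness at each spot using precisely those three ingredients. Your clarification that the arrow labelled $R$ is really $(R,I)$ and your care with the inclusion $H^{*-1}(X;N)\hookrightarrow \Omega^{*-1}(X;N)/\im(d)$ are appropriate and make explicit what the paper leaves implicit.
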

  \begin{proof}
    This is a direct consequence of \eqref{exax}, \eqref{eq:main_diagr} and \eqref{drgl}.
  \end{proof}

\begin{definition}\label{def:flat_part}
    Given a differential extension $\hat E$ of a cohomology theory $(E,\ch)$,
    we define the associated \emph{flat functor}
    \begin{equation*}
      \hat E_{flat}(X):=\ker(R\colon \hat E(X)\to \Omega_{d=0}(X;N)).
    \end{equation*}
  \end{definition}

\begin{remark}
  The naturality of $R$ indeed implies that $X\mapsto \hat E_{flat}(X)$ is a
  {contravariant} functor on the category of smooth manifolds. Actually, this
  functor by Corollary \ref{corol:flat_is_homotopy_inv} is always homotopy
  invariant and  {extends to} a cohomology theory {in many examples}, as we 
  will discuss in Section \ref{sec:flat_part}. Typically, there is a natural
  isomorphism $\hat 
  E^*_{flat}(X)\iso E\reals/\integers^{*-1}(X) $, but we still don't
  know whether this is necessarily always the case (compare the discussion in
  \cite[Section 5 and Section 7]{BSuniqueness}). 
  \end{remark}

The most interesting cases are not just group valued cohomology functors, but
multiplicative cohomology theories, for example K-theory and ordinary
cohomology. We therefore want typically a differential extension which carries
a compatible product structure. 

\begin{definition}\label{multdef1}
Assume that $E$ is a multiplicative cohomology theory, {that $N$ is a $\integers$-graded algebra over $\reals$},  {and that $\ch$ is
compatible with the ring structures}. A differential extension $\hat E$ of
$(E,\ch)$ is called multiplicative if $\hat E$ together with the
transformations $R,I,a$ is a differential extension of $(E,\ch)$, and in
addition  
\begin{enumerate}
\item $\hat E$ is a functor to $\integers$-graded rings,
\item $R$ and $I$ are multiplicative, 
\item $a(\omega)\cup x=a(\omega\wedge R(x))$ for all $x\in \hat E(B)$ and
  $\omega\in\Omega(B;N)/\im(d)$.  
\end{enumerate}
\end{definition}
Deligne cohomology is multiplicative \cite[Chapter 1]{MR1197353},
\cite[Section 6]{MR2179587}, \cite[Section
4]{bunke10:_geomet_descr_of_smoot_cohom}. In this paper, we will
 consider multiplicative extensions $\hat K$ of
$K$-theory.

\subsection{Variations of the axiomatic approach}\label{sec:variations_of_axioms}

Our list of axioms for differential cohomology theories seems
particularly natural: it allows for efficient constructions and to derive the
conclusions we are interested in. However, for the differential refinements of
integral cohomology, a slightly
different system of axioms has been proposed in \cite{MR2365651}. The
main point there is that the requirement of a given natural isomorphism $\hat
E^*_{flat}(X)\to E\reals/\integers^{*-1}(X)$ between the flat part of
Definition \ref{def:flat_part} and $E$ with coefficients in
$\reals/\integers$. It turns out that, for differential extensions of ordinary
cohomology, both sets of axioms imply that there is a unique natural
isomorphism to Deligne cohomology (compare \cite{MR2365651} and
\cite[Section 7]{BSuniqueness}). In particular, for ordinary cohomology they are
equivalent. The corresponding result holds in general under extra
assumptions, which are satisfied for K-theory, compare Section
\ref{sec:flat_part}.

\subsection{Homotopy formula}

A simple, but important consequence of the axioms is the homotopy formula. If
one differential cohomology class can be deformed to another, this formula
allows to compute the difference of the two classes entirely in terms of
differential form information. In a typical application, one will deform an
unknown class to one which is better understood and that way get one's hands
on the complicated class one started with.

\begin{theorem}[Homotopy formula]\label{theo:homotopy_formula}
  Let $\hat E$ be a differential extension of $(E,\ch)$. If $x\in \hat
  E([0,1]\times B)$ and $i_k\colon B\to [0,1]\times B; b\mapsto (k,b)$ are the 
  inclusions then
  \begin{equation*}
    i_1^*x- i_0^*x = a\left(\int_{[0,1]\times B/B} R(x)  \right),
  \end{equation*}
  where $\int_{[0,1]\times B/B}$ denotes integration of differential forms over
    the fiber of the projection $p\colon [0,1]\times B/B$ with the canonical
    orientation of the fiber $[0,1]$.  
\end{theorem}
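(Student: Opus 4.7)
The plan is to reduce to two special cases by writing $x$ as a sum. Let $p \colon [0,1] \times B \to B$ denote the projection. Since $p \circ i_0 = \id_B$ and $i_0 \circ p$ is homotopic to $\id_{[0,1]\times B}$ (via linear contraction of the interval onto $\{0\}$), the homotopy invariance of $E$ yields $p^* i_0^* = \id$ on $E([0,1]\times B)$. Applying $I$ to $w := x - p^* i_0^* x$ and using naturality of $I$ then gives $I(w) = 0$. By the exactness axiom \eqref{exax}, there exists $\omega \in \Omega^{*-1}([0,1]\times B; N)/\im(d)$ with $w = a(\omega)$, so that $x = p^* i_0^* x + a(\omega)$.

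I then verify the formula on each summand separately, using additivity of both sides in $x$. For the summand $p^* i_0^* x$: both $i_0$ and $i_1$ are sections of $p$, hence $i_1^* p^* i_0^* x = i_0^* x = i_0^* p^* i_0^* x$, so the left-hand side vanishes; on the other hand, by naturality $R(p^* i_0^* x) = p^* R(i_0^* x)$ contains no $dt$-component, so its fiber integral over $[0,1]$ is zero, and the right-hand side also vanishes.

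For the summand $a(\omega)$: naturality of $a$ gives $i_1^* a(\omega) - i_0^* a(\omega) = a(i_1^* \omega - i_0^* \omega)$, and axiom \eqref{drgl} gives $R(a(\omega)) = d\omega$. Stokes' theorem for fiber integration over the interval reads
\begin{equation*}
\int_{[0,1]\times B / B} d\omega \;=\; i_1^* \omega - i_0^* \omega \;-\; d\!\int_{[0,1]\times B / B} \omega,
\end{equation*}
so that $\int_{[0,1]\times B/B} R(a(\omega)) \equiv i_1^* \omega - i_0^* \omega$ in $\Omega^{*-1}(B;N)/\im(d)$. Applying $a$ reproduces the left-hand side on this summand, and summing the two cases proves the theorem.

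The main obstacle I anticipate is bookkeeping the sign conventions in the fiber-integration Stokes formula so that the identity is consistent with the canonical orientation of the fiber $[0,1]$ used in the theorem; modulo this, the argument is a direct unwinding of the axioms of Definition \ref{def:axioms}.
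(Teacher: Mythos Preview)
Your proof is correct and follows essentially the same route as the paper's: decompose $x$ as $p^*(\text{something}) + a(\omega)$ using exactness, then verify the formula on each summand via naturality and Stokes' theorem for fiber integration. Your choice $y = i_0^*x$ is in fact slightly cleaner than the paper's, which obtains $y$ by lifting a class from $E(B)$ via surjectivity of $I$; and your explicit handling of the boundary term $d\!\int\omega$ in the Stokes identity (absorbed into $\im(d)$ before applying $a$) is more careful than the paper's presentation.
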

\begin{proof}
   Note that, if $x=p^*y$ for some $y\in \hat E(B)$ then by naturality the
   left hand side of the equation is zero. Moreover, in this case
   $R(x)=p^*R(x)$ so that by the properties of integration over the fiber the
   right hand side vanishes, as well.

   In general, observe that $p$ is a homotopy equivalence, so that we always
   find $\bar y\in E({B})$ with $I(x)=p^*\bar y$. Using surjectivity of $I$, we
   find $y\in \hat E({B} )$ with $I(y)=\bar y$, and then $I(p^*y-x)=p^*\bar y -
   \bar y =0$. Since the sequence \eqref{exax} is exact, there is $\omega\in
   \Omega(B;N)$ with $a(\omega)=x-p^*y$. Stokes' theorem applied to $\omega$
   yields
   \begin{equation*}
     i^*_1\omega -i^*_0\omega = \int_{[0,1]\times B/B} d\omega.
   \end{equation*}
   On the other hand, because of \eqref{drgl}, $d\omega =
   R(a(\omega))$. Substituting, we get
   \begin{equation*}
     \int_{[0,1]\times B/B} d\omega =\int_{[0,1]\times B/B} R(a(\omega))
     =\int_{[0,1]\times B/B} R(x-p^*y) = \int_{[0,1]\times B/B} R(x)
   \end{equation*}
   and (using again vanishing of our expressions for $p^*y$)
   \begin{equation*}
     i^*_1x-i^*_0 = i^*_1a(\omega)=i^*_0a(\omega) = a(i^*_1\omega
     -i^*_0\omega) = a\left(\int_{[0,1]\times B/B} R(x)\right). 
   \end{equation*}

\end{proof}

\begin{corollary}\label{corol:flat_is_homotopy_inv}
  Given a differential extension $\hat E$ of $(E,\ch)$, the associated flat
  functor $\hat E_{flat}$ of Definition \ref{def:flat_part} is homotopy
  invariant. 
\end{corollary}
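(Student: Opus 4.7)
The plan is to deduce the corollary directly from the homotopy formula (Theorem \ref{theo:homotopy_formula}) by feeding in a class that is flat.

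First I would spell out what homotopy invariance means here. Given two smooth maps $f_0,f_1\colon X\to Y$ that are smoothly homotopic via $H\colon [0,1]\times X\to Y$ with $H\circ i_k=f_k$, I need to show that $f_0^*=f_1^*$ as maps $\hat E_{flat}(Y)\to \hat E_{flat}(X)$. Pick $y\in \hat E_{flat}(Y)$ and set $x:=H^*y\in \hat E([0,1]\times X)$. By naturality of $R$, we have $R(x)=H^*R(y)=0$, since $y$ is flat.

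Now I apply the homotopy formula to $x$. It gives
\begin{equation*}
  i_1^*x-i_0^*x=a\left(\int_{[0,1]\times X/X} R(x)\right)=a(0)=0,
\end{equation*}
and since $i_k^*x=i_k^*H^*y=f_k^*y$, this reads $f_1^*y=f_0^*y$, as required. The fact that $f_k^*y$ is again flat is automatic from the naturality of $R$, so $\hat E_{flat}$ is indeed a functor on the homotopy category of smooth manifolds.

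There is essentially no obstacle: the entire content of the argument is already packaged in Theorem \ref{theo:homotopy_formula}, and the only thing that has to be observed is that the right-hand side of the homotopy formula vanishes when the class in question has zero curvature. The one minor point worth noting is that we use that a smooth homotopy between smooth maps of compact manifolds can be chosen to be smooth on $[0,1]\times X$, which is standard and harmless in this setting.
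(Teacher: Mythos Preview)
Your proof is correct and takes essentially the same approach as the paper: both arguments reduce the claim to the homotopy formula (Theorem~\ref{theo:homotopy_formula}) and observe that its right-hand side vanishes when $R(x)=0$. The paper factors through the intermediate statement that $i_0^*=i_1^*$ on $\hat E_{flat}([0,1]\times X)$, whereas you pull back along $H$ first and then apply the homotopy formula, but this is only a cosmetic difference.
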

\begin{proof}
  Let $H\colon [0,1]\times X\to Y$ be a homotopy between $f=H_0$ and $g=H_1$. 
  We have to show that $f^*=g^*\colon \hat E_{flat}(Y)\to \hat
  E_{flat}(X)$. By functoriality, it suffices to show that $i_0^*=i_1^*\colon
  \hat 
  E_{flat}([0,1]\times X)\to \hat E_{flat}(X)$, as $f^*=i_0^*\circ H^*$ and
  $g^*=i_1^*\circ H^*$. This, however, follows immediately from Theorem
  \ref{theo:homotopy_formula} once $R(x)=0$.
\end{proof}

We have seen above that $E_{flat}^*$ is a homotopy invariant functor. Ideally,
it should {extend to} a generalized cohomology theory (compare the discussion of
Section \ref{sec:variations_of_axioms}). For this, we need a bit of extra
structure which corresponds to the suspension isomorphism, and which is
typically easily available. We formulate this in terms of integration over the
fiber for $X\times S^1$, originally defined in \cite[Definition
1.3]{BSuniqueness}. Note that the projection $X\times S^1\to X$ is canonically
oriented  
for an arbitrary cohomology theory because the tangent bundle of $S^1$ is
canonically trivialized. For a push-down in differential cohomology, in
general we expect that one has to choose geometric data of the fibers, which
again we can assume to be canonically given for the fiber $S^1$. Orientations
and push-forward homomorphisms for differential cohomology theories, in
particular for differential K-theory are discussed in Section
\ref{sec:integration}. 
 
\begin{definition}\label{def:integration_over_S1}
  We say that a differential extension $\hat E$ of a cohomology theory
  $(E,\ch)$ has \emph{$S^1$-integration} if there is a natural transformation
  $\int_{X\times S^1/X} \colon \hat E^*(X\times S^1)\to \hat E^{*-1}(X)$ which
  is compatible 
  with the transformations $R,I$ and the ``integration over the fiber''
  $\Omega^*(X\times S^1;{N})\to \Omega^{*-1}(X;{N})$ as well as $E^*(X\times S^1)\to
  E^{*-1}(X)$. In addition, we require that $\int_{X\times S^1/X} p^*x=0$ for
  each $x\in \hat 
  E^*(X)$ and $\int_{X\times S^1/X}(\id\times t)^*x=-x$ for each $x\in \hat
  E^*(X\times S^1)$,
  where $t\colon S^1\to S^1$ is complex conjugation. 
\end{definition}

In \cite[Corollary 4.3]{BSuniqueness} we prove that in many situations, e.g.~for
ordinary cohomology or for K-theory, there is a canonical choice of
integration transformation. 
\begin{theorem} \label{theo:integration_exists}
  If $\hat E$ is a multiplicative differential extension of $(E,\ch)$ and if
  $E^{-1}(pt)$ is a torsion group, then $\hat E$ has a canonical
  $S^1$-integration
  as in Definition \ref{def:integration_over_S1}.
\end{theorem}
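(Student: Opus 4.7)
My plan is to construct a canonical class $\hat\sigma\in\hat E^1(S^1)$ refining the suspension generator of $\tilde E^1(S^1)$, and then to define the $S^1$-integration by a slant-product formula using $\hat\sigma$ and the multiplicative structure.

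For the class $\hat\sigma$: let $\sigma\in\tilde E^1(S^1)\iso E^0(\mathrm{pt})$ be the suspension generator, and $\tau\in\Omega^1(S^1;N^0)$ the normalized canonical volume form, so that $\int_{S^1}\tau=1$ in $N^0$. Since $\Rham(\tau)=\ch(\sigma)$ in $H^1(S^1;N)$, the surjectivity in \eqref{eq:les_R_surj} produces a lift $\hat\sigma\in\hat E^1(S^1)$ with $(R(\hat\sigma),I(\hat\sigma))=(\tau,\sigma)$; this lift is a priori only unique up to the image under $a$ of $H^0(S^1;N)/\ch(E^0(S^1))$. I would pin down $\hat\sigma$ uniquely by the normalization $j^*\hat\sigma=0\in\hat E^1(\mathrm{pt})$, where $j\colon\mathrm{pt}\to S^1$ is the basepoint inclusion. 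The torsion hypothesis $E^{-1}(\mathrm{pt})_{\mathrm{tors}}=E^{-1}(\mathrm{pt})$ forces $\ch(E^{-1}(\mathrm{pt}))=0$ in $N^{-1}$, which identifies the ambiguity group with its image under $j^*$ in $\hat E^1_{flat}(\mathrm{pt})$, making the normalization $j^*\hat\sigma=0$ uniquely realizable.

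For the integration itself: given $x\in\hat E^*(X\times S^1)$, the basepoint section $i_0\colon X\to X\times S^1$ yields a splitting $x=p^*i_0^*x+(x-p^*i_0^*x)$, the second summand lying in $\ker(i_0^*)$. Set $\int_{X\times S^1/X}p^*y:=0$. On $\ker(i_0^*)$, a combination of the multiplicative structure and the exact sequence \eqref{eq:les_R_surj} allows me to write elements in the form $p^*z\cup\mathrm{pr}_2^*\hat\sigma+a(\omega)$ for some $z\in\hat E^{*-1}(X)$ and a form $\omega$ on $X\times S^1$ vanishing at the basepoint. Define $\int_{X\times S^1/X}(p^*z\cup\mathrm{pr}_2^*\hat\sigma):=z$ and $\int_{X\times S^1/X}a(\omega):=a\bigl(\int_{S^1}\omega\bigr)$; relation (3) of Definition \ref{multdef1} guarantees these are compatible and that the total integration is independent of the chosen presentation. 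The axioms of Definition \ref{def:integration_over_S1} then follow: compatibility with $R$ uses $\int_{S^1}R(\hat\sigma)=1$ and the Leibniz rule; compatibility with $I$ uses that $\sigma=I(\hat\sigma)$ is the topological suspension generator; vanishing on $p^*x$ is built into the definition; and $\int_{X\times S^1/X}(\mathrm{id}\times t)^*x=-x$ follows from $t^*\hat\sigma=-\hat\sigma$, itself forced by the normalization $j^*\hat\sigma=0$ (since $t$ fixes $j$) together with the torsion hypothesis controlling the remaining ambiguity.

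The main obstacle is the canonical construction of $\hat\sigma$: making the normalization $j^*\hat\sigma=0$ rigorous and verifying that the induced integration is well-defined on the a priori non-unique presentation of classes in $\ker(i_0^*)$. The torsion hypothesis on $E^{-1}(\mathrm{pt})$ does the essential work of collapsing the cokernel computations so that $\hat\sigma$ is genuinely canonical and independent natural presentations give the same value. Once this is settled, the remaining steps reduce to careful bookkeeping with the multiplicative structure and the exact sequence \eqref{eq:les_R_surj}.
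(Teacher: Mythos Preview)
Your approach is correct and is essentially the argument of \cite[Section 4]{BSuniqueness}, which is what the survey cites in lieu of its own proof: construct a canonical $\hat\sigma\in\hat E^1(S^1)$ lifting the suspension generator with curvature the normalized volume form, then define the integration by the slant-product formula using the multiplicative structure.

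One point in your write-up should be sharpened. You say the torsion hypothesis forces $\ch(E^{-1}(\mathrm{pt}))=0$ in $N^{-1}$, and that this ``identifies the ambiguity group with its image under $j^*$''. But the ambiguity in the lift is
\[
H^0(S^1;N)/\ch\bigl(E^0(S^1)\bigr)\;\cong\;\bigl(N^0/\ch(E^0(\mathrm{pt}))\bigr)\oplus N^{-1},
\]
and $j^*$ annihilates the entire $N^{-1}$ summand. So the normalization $j^*\hat\sigma=0$ determines $\hat\sigma$ uniquely only once $N^{-1}=0$; the weaker fact $\ch(E^{-1}(\mathrm{pt}))=0$ is not sufficient. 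In the universal setting $N=E^*(\mathrm{pt})\otimes\reals$ (which is the context of \cite{BSuniqueness} and implicitly of this theorem) the torsion hypothesis on $E^{-1}(\mathrm{pt})$ is exactly the statement $N^{-1}=0$, so everything is fine---but you should invoke this rather than the vanishing of $\ch$. With that correction your well-definedness check (independence of the presentation $x=p^*z\cup\mathrm{pr}_2^*\hat\sigma+a(\omega)$) and the verification of $t^*\hat\sigma=-\hat\sigma$ go through as you outline.
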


\subsection{$\hat E_{flat}^*$ as generalized cohomology theory $E\reals/\integers$}
\label{sec:flat_part}

{Let $E$ be a generalized cohomology theory. In the present section we consider a universal differential extension $\hat E$, i.e.~we take $N:=E(*)\otimes_{\integers}\reals$ and let $\ch:E(X)\to H(X;N)$ be the canonical transformation.}

To $E$ there is an associated
generalized cohomology theory $E\reals/\integers$ ($E$ with coefficients in
$\reals/\integers$). It is {constructed} with the help of stable homotopy
theory: the cohomology theory $E$ is given by a spectrum (in the sense of
stable homotopy theory) $\mathbf{E}$, and $E\reals/\integers$ is given by the
spectrum $\mathbf{E}\smash M(\reals/\integers)$, where $M(\reals/\integers)$
is the Moore spectrum of the abelian group
$\reals/\integers$. $E\reals/\integers$ is constructed in such a way that one
has natural long exact sequences
\begin{equation*}
  \to E^*(X)\to E\reals^*(X)\to E\reals/\integers^*(X)\to
  E^{*+1}(X)\to\dots 
\end{equation*}
Note that for {a finite $CW$-complex $X$} one can alternatively write $E^*(X)\tensor \reals=E\reals^*(X)$ with
$E\reals$ defined by smash product with $M(\reals)$.

In the fundamental paper \cite{MR2192936}, Hopkins and Singer construct a
specific differential extension for any generalized cohomology theory $E$. For
this particular construction, one has by \cite[(4.57)]{MR2192936} a natural
isomorphism 
\begin{equation*}
  E\reals/\integers^{*-1}(X) \to E^*_{flat}(X).
\end{equation*}

However, this is a consequence of the particular model used in
\cite{MR2192936}. It is therefore an interesting 
question to which extend the axioms alone imply that $\hat E_{flat}$ is a
generalized cohomology theory. Here, some extra structure about the suspension
isomorphism seems to be necessary, implemented by the transformation
``integration over $S^1$'' of Definition \ref{def:integration_over_S1}. With a
surprisingly complicated proof one gets \cite[Theorem 7.11]{BSuniqueness}:

\begin{theorem} \label{theo:flat_is_cohom}
If $(\hat E,R,I,a,\int)$ is a differential extension of $E$ with integration
over $S^1$, then  $\hat E^*_{flat}$ 
has natural long exact Mayer-Vietoris sequences. It
is equivalent to the restriction  to compact manifolds of a generalized
cohomology theory represented by a spectrum. Moreover, $a\colon E\reals^*\to
\hat E_{flat}^{*+1}$ and $I\colon \hat E_{flat}^*\to E\reals/\integers^{*}$ are
natural transformations of cohomology theories and one obtains a natural long
exact sequence
for each finite CW-complex $X$
\begin{equation}\label{eq:les_flat}
  \to E\reals^{*-1}(X)\xrightarrow{a} \hat E_{flat}^*(X)\xrightarrow{I}
  E^*(X)\xrightarrow{ch}   E\reals^*(X)\to .
\end{equation}
\end{theorem}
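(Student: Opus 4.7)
The plan is to verify on the category of compact smooth manifolds the three defining properties of a reduced generalized cohomology theory for $\hat E_{flat}$ --- homotopy invariance, a suspension isomorphism, and natural Mayer--Vietoris long exact sequences --- then to invoke Brown representability to obtain a representing spectrum, and finally to splice the axiomatic sequence \eqref{exax} against the Bockstein for the coefficient sequence $E\to E\reals\to E\reals/\integers$ to produce \eqref{eq:les_flat}. Homotopy invariance is already at hand from Corollary \ref{corol:flat_is_homotopy_inv}, so the real work concerns the remaining two properties and the identification with $E\reals/\integers^{*-1}$.

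For the suspension I would exploit $\int_{X\times S^1/X}$ of Definition \ref{def:integration_over_S1}. Combined with pullback along a section $s\colon X\to X\times S^1$, the relations $\int\circ p^*=0$ and $\int\circ(\id\times t)^*=-\int$ exhibit $(s^*,\int)$ as a splitting, yielding a natural direct-sum decomposition $\hat E^*(X\times S^1)\iso \hat E^*(X)\oplus \hat E^{*-1}(X)$ whose second summand is the reduced group. Since the transformation $R$ intertwines $\int$ with form-level integration, flatness is preserved under this splitting, and one obtains the suspension isomorphism $\widetilde{\hat E}^*_{flat}(X\times S^1)\iso \hat E^{*-1}_{flat}(X)$ for the reduced functor.

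The main obstacle, and the reason the proof of \cite[Theorem 7.11]{BSuniqueness} is so involved, is Mayer--Vietoris. Given $X=U\cup V$ one sets the sequences \eqref{exax} and \eqref{eq:les_R_surj} against the classical Mayer--Vietoris sequences for $E^*$, $H^*(\cdot;N)$, and $\Omega^*_{d=0}$ in one large diagram. The connecting homomorphism $\delta\colon \hat E^*_{flat}(U\cap V)\to\hat E^{*+1}_{flat}(X)$ is built by extending a flat class arbitrarily to $\hat E^*(U)$, measuring the defect on $U\cap V$ as an element $a(\omega)$, cutting off $\omega$ by a partition of unity, and applying $a$ globally; independence of the choices comes from the homotopy formula (Theorem \ref{theo:homotopy_formula}) together with the axioms $R\circ a=d$ and \eqref{drgl}. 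The delicate step is exactness at $\hat E^*_{flat}(X)$: a diagram chase must reconcile the failure of a flat class to split across $U$ and $V$ with a form-level cocycle whose de Rham class pulls back from $E^*(U\cap V)$, which is where the flatness hypothesis and \eqref{eq:les_R_surj} have to be applied simultaneously.

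To finish, one identifies $\hat E^*_{flat}$ with $E\reals/\integers^{*-1}$ and deduces \eqref{eq:les_flat}. For a flat class $x$, the vanishing $R(x)=0$ forces $\ch(I(x))=\Rham(R(x))=0$, so $I$ lands in $\ker(\ch)\subset E^*$; conversely \eqref{exax} together with $R\circ a=d$ identifies $\ker(I|_{\hat E^*_{flat}})$ with $\ker(d)/(\im d+\ch(E^{*-1}))\iso H^{*-1}(X;N)/\ch(E^{*-1}(X))$, which is canonically the image of $a\colon E\reals^{*-1}\to \hat E^*_{flat}$. Splicing this with the Bockstein sequence for $E\to E\reals\to E\reals/\integers$ produces both the natural transformation $\hat E_{flat}^*\to E\reals/\integers^{*-1}$ and the exact sequence \eqref{eq:les_flat}. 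That $a$ and $I$ are transformations of cohomology theories then follows by naturality from the Mayer--Vietoris and suspension verifications above, and the identification with $E\reals/\integers^{*-1}$ is pinned down by a five-lemma comparison with the Hopkins--Singer model of \cite{MR2192936}.
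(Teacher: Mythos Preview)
Your overall architecture for the representability half --- homotopy invariance from Corollary \ref{corol:flat_is_homotopy_inv}, a suspension isomorphism from the $S^1$-integration, a Mayer--Vietoris sequence built by hand, then Brown representability --- is the route of \cite[Theorem 7.11]{BSuniqueness}, which the paper simply cites without reproducing. The only thing the paper actually proves here is the exact sequence \eqref{eq:les_flat}, and your middle computation is essentially that argument: $R\circ a=d$ forces $a|_{E\reals^{*-1}}$ to land in $\hat E_{flat}$, exactness at $\hat E_{flat}$ and at $E\reals^{*-1}$ is read off from \eqref{exax}, $\ch\circ I=0$ on flat classes comes from \eqref{eq:main_diagr}, and surjectivity of $I$ onto $\ker(\ch)$ is obtained by lifting $x\in\ker(\ch)$ arbitrarily to $y\in\hat E^*(X)$ and correcting by $a(\omega)$ where $d\omega=R(y)$.

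There is, however, a genuine overreach in your last paragraph. The theorem does \emph{not} assert that $\hat E_{flat}^*\iso E\reals/\integers^{*-1}$; that is Theorem \ref{theo:flat_is_RZ}, which requires the additional hypotheses that each $E^k(pt)$ is finitely generated and $E^*_{tors}(pt)=0$. Your proposal to ``splice with the Bockstein'' and then ``pin down the identification by a five-lemma comparison with the Hopkins--Singer model'' does not work in the generality of Theorem \ref{theo:flat_is_cohom}: there is no a priori natural transformation between $\hat E_{flat}$ and the Hopkins--Singer flat theory with which to run a five-lemma, and producing one is precisely the hard content of the uniqueness machinery (compare the proof of Theorem \ref{theo:flat_is_RZ}, where the map is built at the spectrum level via a fiber sequence and a phantom-map argument that needs the finite-generation hypothesis). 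The sequence \eqref{eq:les_flat} itself needs none of this --- it is a direct consequence of the axioms, as your own computation shows --- so the Bockstein and Hopkins--Singer detours should be dropped.
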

\begin{proof}
  The long exact sequence \eqref{eq:les_flat} is not stated like that in
  \cite{BSuniqueness} and we will need it below, therefore we explain how this
  is achieved. Because of \eqref{drgl}, the restriction of $a$ to
  $E\reals^{*-1}(X)$ (realized as de Rham cohomology, i.e.~as $\ker(d\colon
  \Omega^{*-1}(X;N)/\im(d)\to \Omega^*(X;N))$) hits exactly $\hat
  E_{flat}(X)$. The exactness at $\hat E_{flat}(X)$ therefore is a direct
  consequence of the exactness of \eqref{exax}. \eqref{exax} also implies
  immediately that $\ker(a)=\im(ch)$ in \eqref{eq:les_flat}.  Because of
  \eqref{eq:main_diagr}, $ch\circ I=0$ in \eqref{eq:les_flat}. Finally, as
  $I\colon \hat E^*(X)\to E(X)$ is surjective, any $x\in \ker(ch)$ can be
  written as $I(y)$ with $y\in \hat E^*(X)$ and such that $R(y)\in \im(d)$,
  i.e.~$R(y)=d(\omega)$ for some $\omega\in \Omega^{*-1}(X)/\im(d)$. But then
  $x=I(y-d\omega)$ and $y-d\omega\in E^*_{flat}(X)$, which implies that
  \eqref{eq:les_flat} is also exact at $E^*(X)$. 
\end{proof}

\begin{corollary}\label{corol:MV_for_diff}
  Let $(\hat E,R,I,a,\int)$ be a differential extension of $E$ with
  integration over $S^1$, or more generally assume that $\hat E_{flat}$ has
  natural long exact Mayer-Vietoris sequences.
{
  Assume that $X_{1}, X_{2},X_{0}:=X_{1}\cap X_{2}\subseteq X$ are closed submanifolds of codimension $0$
   with boundary (and corners) such the interiors of $X_{1}$ and $X_{2}$ cover $X$.} Then one has a long exact Mayer-Vietoris sequence
  \begin{equation*}
  \hat E^{n+1}_{flat}(X_0)\xrightarrow{i\circ\delta}  \hat E^n(X)\to \hat E^n(X_1)\oplus \hat
  E^n(X_2)\to \hat E^n(X_0)\xrightarrow{\delta\circ I} E^{n-1}(X)\cdots
  \end{equation*}
  which continues to the right with the Mayer-Vietoris sequence for $E$ and to
  the left with the Mayer-Vietoris sequence for $\hat E$.
\end{corollary}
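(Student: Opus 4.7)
My plan is to derive this combined Mayer--Vietoris sequence by assembling three pieces of data: the Mayer--Vietoris sequence for $\hat E_{flat}$ (given by hypothesis or by Theorem~\ref{theo:flat_is_cohom}), the standard Mayer--Vietoris sequence for the generalized cohomology theory $E$, and the axiomatic exact sequence~\eqref{exax} which ties the two together through $\hat E$.

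I would begin by specifying the maps explicitly. Writing $i_k\colon X_k\hookrightarrow X$ and $j_k\colon X_0\hookrightarrow X_k$ for the inclusions, the two middle maps will be $(i_1^*,i_2^*)$ and $(x_1,x_2)\mapsto j_1^* x_1 - j_2^* x_2$. The left connecting map $i\circ\delta$ is the Mayer--Vietoris boundary of $\hat E_{flat}$ followed by the inclusion $\hat E_{flat}\hookrightarrow \hat E$; the right connecting map $\delta\circ I$ is $I\colon\hat E\to E$ followed by the Mayer--Vietoris boundary of $E$. A routine naturality check (using that $I$ commutes with pullbacks and that $\hat E_{flat}\hookrightarrow\hat E$ is natural) then shows that consecutive maps compose to zero.

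The heart of the argument will be exactness at $\hat E^n(X_1)\oplus\hat E^n(X_2)$. Given $(x_1,x_2)$ with $j_1^* x_1 = j_2^* x_2$, I would first apply $I$ and use Mayer--Vietoris for $E$ to obtain $\bar y\in E^n(X)$ restricting to $I(x_i)$, and lift to $y\in\hat E^n(X)$ using surjectivity of $I$ from~\eqref{exax}. Then $x_i - y|_{X_i}\in\ker(I)=\im(a)$, so I can write $x_i - y|_{X_i}=a(\omega_i)$ for some $\omega_i\in\Omega^{n-1}(X_i;N)/\im(d)$. The two forms must agree on $X_0$ modulo $\im(\ch)$, and the plan is to combine a partition-of-unity argument gluing modifications of $\omega_1$ and $\omega_2$ with the flat Mayer--Vietoris hypothesis to absorb the $\im(\ch)$-ambiguity, producing a class $z\in\hat E^n_{flat}(X)$ and a global form $\omega$ so that $y+z+a(\omega)$ restricts to $x_i$ on $X_i$. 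Exactness at $\hat E^n(X_0)$ would follow symmetrically, by lifting a Mayer--Vietoris preimage from $E$ and correcting with a form, while exactness at $\hat E^n(X)$ reduces to the observation that a class with vanishing restrictions to $X_1,X_2$ is automatically flat (its curvature vanishes on an open cover), after which one invokes Mayer--Vietoris for $\hat E_{flat}$.

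The main obstacle I anticipate is the form-gluing inside the central exactness step: the forms $\omega_i$ match on $X_0$ only modulo $\im(\ch)$, so a direct partition-of-unity patching is unavailable. The right fix, I expect, is to invoke the long exact sequence of Theorem~\ref{theo:flat_is_cohom} on $X_0$ to convert the $\im(\ch)$-defect into a class in $\hat E_{flat}(X_0)$ and then trade this class for a global correction via the Mayer--Vietoris boundary of $\hat E_{flat}$ supplied by the hypothesis; a final comparison check then ensures that the connecting maps constructed this way coincide with the ones inherited from the Mayer--Vietoris sequences of $\hat E_{flat}$ and of $E$.
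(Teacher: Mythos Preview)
Your proposal is essentially correct and follows the same approach as the paper: the paper's proof is itself only a sketch, naming as ingredients the Mayer--Vietoris sequences for $E$ and for $\hat E_{flat}$, the short exact sequence of differential forms $0\to\Omega^*(X)\to\Omega^*(X_1)\oplus\Omega^*(X_2)\to\Omega^*(X_0)\to 0$ (the partition-of-unity input you invoke), and the exact sequence~\eqref{exax}. One ingredient the paper lists that you do not mention is the homotopy formula (Theorem~\ref{theo:homotopy_formula}); you may find you need it when passing between a codimension-$0$ submanifold and a slightly thickened or shrunken collar neighborhood, or when checking that the correction classes you build are independent of auxiliary choices.
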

\begin{proof}
  The proof is an standard diagram chase, using the Mayer-Vietoris sequences
  for $E$ and $E_{flat}$, the short exact sequence
  \begin{equation*}
    0\to \Omega^*(X)\to \Omega^*(X_1)\oplus \Omega^*(X_2)\to \Omega^*(X_0)\to 0,
  \end{equation*}
  {the homotopy formula}, and the exact sequences \eqref{exax}.
\end{proof}

\begin{theorem}\label{theo:flat_is_RZ}
If, in addition to the assumption of Theorem \ref{theo:flat_is_cohom},  $E^k$
is finitely generated for each $k\in\integers$ and the torsion
subgroup 
$E^*_{tors}(pt)=0$ then there is an isomorphism of cohomology theories $\hat
E^*_{flat}\xrightarrow{\iso} E\reals/\integers^{*+1}$. 
\end{theorem}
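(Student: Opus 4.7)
The plan is to identify $\hat E^{*}_{flat}$ with $E\reals/\integers^{*-1}$ by comparing the long exact sequence from Theorem \ref{theo:flat_is_cohom},
\begin{equation*}
\cdots\to E^{*-1}(X)\xrightarrow{\ch}E\reals^{*-1}(X)\xrightarrow{a}\hat E_{flat}^{*}(X)\xrightarrow{I}E^{*}(X)\xrightarrow{\ch}E\reals^{*}(X)\to\cdots,
\end{equation*}
with the Bockstein sequence coming from the coefficient cofibre sequence of spectra $\mathbf{E}\to\mathbf{E}\reals\to\mathbf{E}\reals/\integers$. The two sequences agree at the outer $\ch$-terms, so the core task is to construct a natural transformation between the middle terms and promote it to an isomorphism.

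First, I would construct the natural transformation $\phi\colon E\reals/\integers^{*-1}\to\hat E_{flat}^{*}$ at the level of representing spectra. Theorem \ref{theo:flat_is_cohom} guarantees that $\hat E_{flat}$ is represented by some spectrum $\mathbf{F}$, and the exactness of the long sequence for all finite CW-complexes $X$ implies, via Brown representability, that $\mathbf{F}$ sits in a cofibre sequence $\mathbf{E}\reals[-1]\to\mathbf{F}\to\mathbf{E}$ whose attaching map represents $\ch$. The universal property of the cofibre of $\ch\colon\mathbf{E}\to\mathbf{E}\reals$ then produces a canonical map of spectra $\mathbf{E}\reals/\integers[-1]\to\mathbf{F}$ and hence the desired $\phi$, compatible by construction with $a$ and $I$ on both sides.

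Next I would verify that $\phi$ is an isomorphism on a point, which is where the hypotheses enter. Finite generation and torsion-freeness of $E^{*}(pt)$ give $E\reals^{*}(pt)=E^{*}(pt)\tensor\reals$ with $\ch$ the inclusion of a full lattice, so by the universal coefficient theorem $E\reals/\integers^{*-1}(pt)=E^{*-1}(pt)\tensor\reals/\integers$. Torsion-freeness also forces $\ch$ to be injective on $E^{*}(pt)$, hence $I=0$ on $\hat E_{flat}^{*}(pt)$, and the long exact sequence identifies $\hat E_{flat}^{*}(pt)$ with $E\reals^{*-1}(pt)/\im(\ch)\iso E^{*-1}(pt)\tensor\reals/\integers$; under these presentations $\phi$ at the point is the identity. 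Finally, a natural transformation of cohomology theories on finite CW-complexes that is an isomorphism on a point is an isomorphism globally, by induction over cells using the Mayer-Vietoris sequence of Corollary \ref{corol:MV_for_diff} and the five lemma.

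The main obstacle is the spectrum-level construction of $\phi$ in the first step: extracting a genuine cofibre-sequence structure on $\mathbf{F}$ whose attaching map is precisely $\ch$, rather than $\ch$ plus some correction, requires controlling the space of natural transformations $\mathbf{E}\to\mathbf{F}[1]$, and this is where the torsion-freeness hypothesis on $E^{*}(pt)$ is essential: it kills the $\mathrm{Ext}$-obstructions that would otherwise allow the attaching map to deviate from $\ch$ by a torsion-valued natural transformation, ensuring that $\phi$ exists and is unique up to an overall constant determined by the point check.
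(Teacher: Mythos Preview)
Your overall strategy matches the paper's: compare the two long exact sequences, build a comparison map of spectra, then check it on a point. The gap is in the first step, and you misattribute which hypothesis resolves it.

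You assert that exactness of the long sequence for all finite $X$ forces, via Brown representability, a cofibre sequence $\mathbf{E}\reals[-1]\to\mathbf{F}\to\mathbf{E}$ with attaching map $\ch$. This does not follow: a long exact sequence on cohomology groups does not by itself produce a cofibre sequence of spectra with prescribed attaching map. The paper proceeds more carefully and in the opposite direction. It takes the fibre $F$ of the one spectrum map already in hand, namely $a\colon E\reals\to U$ (where $U$ represents $\hat E_{flat}$), obtaining an honest cofibre sequence $F\to E\reals\to U$. Comparing its long exact sequence with \eqref{eq:les_flat} shows that the image of $F(X)\to E\reals(X)$ agrees with that of $\ch$, so the composite $F\to E\reals\to E\reals/\integers$ vanishes on every finite complex, i.e.\ is a \emph{phantom map}. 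It is the \emph{finite-generation} hypothesis---not torsion-freeness, as your last paragraph suggests---that forces such phantom maps to be null, via \cite[Section~8]{BSuniqueness}; this yields a map of distinguished triangles and hence $\phi\colon U\to E\reals/\integers$.

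There is a second subtlety you gloss over: you claim $\phi$ is compatible with both $a$ and $I$ by construction, but only compatibility with $a$ is automatic from the map of triangles. The paper explicitly notes that the square in \eqref{eq:special} involving $I$ and the Bockstein into $E^{*+1}_{tors}$ is \emph{not} known to commute. This is where torsion-freeness actually enters: on the point those torsion terms vanish, so the five-lemma applies there regardless of that square. Your point computation is correct, but the two hypotheses are doing different jobs than you indicate---finite generation buys the existence of $\phi$, torsion-freeness buys the isomorphism on the point.
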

\begin{proof}
  We claim this statement as \cite[Theorem 7.12]{BSuniqueness}. However, the
  proof given there is not correct, and the assertion of \cite[Theorem
  7.12]{BSuniqueness} unfortunately is slightly stronger
  than the one we can actually prove, namely Theorem \ref{theo:flat_is_RZ}. 

  As  by Theorem \ref{theo:flat_is_cohom} $\hat E_{flat}$ is a generalized
  cohomology theory, it is represented by a spectrum $U$, and the natural
  transformation $a$ by a map of spectra. We extend this to a fiber sequence
  of spectra $F\to E\reals\xrightarrow{a} U$, inducing for each compact
  CW-complex $X$ an associated long exact sequence
  \begin{equation}
    \label{eq:FEU_seq}
     \to F^*(X)\to E\reals^*(X)\xrightarrow{a} \hat E_{flat}^*(X)\to.
  \end{equation}
  Comparison with \eqref{eq:les_flat} implies that the image of $F(X)$ in
  $E\reals(X)$ 
  coincides with the image of $E(X)$. This means by definition that the
  composed map of spectra  $F\to E\reals\to E\reals/\integers$ is a phantom
  map. However, under the assumption that the $E^k$ is finitely generated for
  each 
  $k$, it is shown in \cite[Section 8]{BSuniqueness} that such a phantom map
  is automatically trivial. Using the triangulated structure of the homotopy
  category of spectra, we can choose $\phi$, $\phi_F$ to obtain a map of fiber
  sequences (distinguished 
  triangles)
  \begin{equation*}
    \begin{CD}
      F @>>> E\reals @>a>> U\\
      @VV{\phi_F}V @VV=V @VV{\phi}V\\
      E @>>> E\reals @>>> E\reals/\integers.
    \end{CD}
  \end{equation*}
with associated diagram of exact
  sequences  which because of the knowledge about image and kernel of
  $a$ specializes to 
  \begin{equation}\label{eq:special}
    \begin{CD}
      E^*(X) @>>> E\reals^*(X) @>a>> \hat E^{*+1}_{flat}(X) @>>>
      E^{*+1}_{tors}(X)\to 0\\
      @VV=V @VV=V @VV{\phi}V \\
      E^*(X) @>>> E\reals^*(X) @>>> E\reals/\integers^*(X) @>>>
      E^{*+1}_{tors}(X) \to 0.
    \end{CD}
  \end{equation}
  Note that we do not claim (and don't know) whether the diagram can be
  completed to a commutative diagram by $\id\colon E_{tors}^{*+1}(X)\to
  E_{tors}^{*+1}(X)$. 

  If $E_{tors}^*(pt)=0$, the $5$-lemma implies that $\phi\colon U\to
  E\reals/\integers$ induces an isomorphism on the point and therefore for all
  finite CW-complexes.
\end{proof}

\section{Uniqueness of differential extensions}

\label{sec:Uniqueness}

Given the many different models of differential extensions of K-theory, many
of which we are going to
described in Section \ref{sec:models}, it is reassuring that the resulting
theory is uniquely determined. The corresponding statement for differential
extensions of ordinary cohomology has been established in \cite{MR2365651} by
Simons and Sullivan. For K-theory and many other generalized cohomology
theories we will establish this in the current section.

{
As in Subsection \ref{sec:flat_part} we consider universal differential extensions of $E$. 
}
Given two extensions $\hat E$ and $\hat E'$ of 
$E$ with corresponding natural transformations $a,I$ as in \ref{def:axioms},
we are looking for a natural isomorphism $\Phi\colon \hat E\to \hat E'$
compatible with the natural transformations. Provided such a natural
transformation exists, we ask whether it is unique. We have seen that it often
is natural to have additionally a transformation ``integration over $S^1$'' as
in Definition \ref{def:integration_over_S1}, and we require that $\Phi$ is
compatible with this transformation, as well.

To construct the transformation $\Phi$ in degree $k$ there is the following
basic strategy: 
\begin{itemize}
\item find a classifying space $B$ for $E^k$ with universal element $u\in
  E^k(B)$. This means that for any space $X$ and $x\in E^k(X)$, there is a map
  $f\colon X\to B$ (unique up to homotopy) such that $x=f^*u$.
\item lift this universal element to $\hat u\in \hat E^k(B)$ and show that
  this class is universal for $\hat E^k$, or at least that for a class $\hat
  x\in \hat E^k(X)$ one can find $f\colon X\to B$ such that the difference
  $\hat x-f^*\hat u$ is under good control.
\item Obtain similarly $\hat u'\in (\hat E')^k(B)$. Define the transformation
  by $\Phi(\hat u)=\hat u'$ and extend by naturality.
\item Check that $\Phi$ has all desired properties.
\item Uniqueness of $\Phi$ does follow if the lifts $\hat u,\hat u'$ are
  uniquely determined by $u$ once their curvature is also fixed.
\end{itemize}

There are a couple of obvious difficulties implementing this strategy. The
first is the fact, that the classifying space $B$ almost never has the
homotopy type of a finite dimensional manifold. Therefore, $\hat E(B)$ is not
defined. This is solved by replacing $B$ by a sequence of manifolds
approximating $B$ with a compatible sequence of classes in $\hat E^k$ which
replace $\hat u$. Then, the construction of $\Phi$ as indicated indeed is
possible. However, a priori this has a big flaw. $\Phi$ is not necessarily a
transformation of abelian groups. Because of the compatibilities with $a$, $R$
and $I$ the deviation from being additive is rather restricted and in the end
is a class in $E\reals^{k-1}(B\times B)/\im(ch)$. The
different possible transformation are by naturality and the compatibility
conditions determined by the different lift $\hat u'$ with fixed
curvature. This indeterminacy is given by an element in
$E\reals^{k-1}(B)/\im(ch)$. If $k$ is even and $E$ is rationally even, then so
is $B$ as classifying space of $E^k$. It then follows that
$E\reals^{k-1}(B\times B)/\im(ch)=\{0\}$ and $E\reals^{k-1}(B)/\im(ch)=\{0\}$,
i.e. $\Phi$ automatically is additive and unique.

For $k$ odd, the transformation can then be defined (and is uniquely
determined) by the requirement that it is compatible with $\int_{X\times
  S^1/X}$. This construction has been carried out in detail in
\cite{BSuniqueness} and we arrive at the following theorem.

\begin{theorem}\label{theo:uniqueness}
Assume that  $(\hat E,R,I,a,\int)$ and  $(\hat
E^\prime,R^\prime,I^\prime,a^\prime,\int')$ are two differential extensions
with $S^1$-integration of a 
generalized cohomology theory $E$ which is rationally even,
i.e.~$E^{2k+1}(pt)\tensor \rationals=0$ for all $k\in\integers$. Assume
furthermore that $E^k(pt)$ is a finitely generated abelian group for each
$k\in\integers$. Then there is a unique natural isomorphism between these
differential extensions compatible with the $S^1$-integrations.

If no $S^1$-orientation is given, the natural isomorphism can still be
constructed on the even degree part.

If $\hat E$ and $\hat E'$ are multiplicative, the 
transformation is automatically multiplicative. Note that the assumptions
imply by Theorem 
\ref{theo:integration_exists} that then there is a canonical integration.

If $\hat E$ and $\hat E'$ are defined on all manifold, not only on compact
manifolds (possibly with boundary), then it suffices to require that $E^k(pt)$
is countably generated for each $k\in\integers$, and the same assertions hold.
\end{theorem}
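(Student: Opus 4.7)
The plan is to follow the template sketched just before the theorem: construct $\Phi\colon \hat E \to \hat E'$ degree by degree by using a classifying space for $E^k$, lifting the universal class to a differential class with prescribed curvature, and extending by naturality. For each $k \in \integers$ I pick a CW model $B_k$ for $E^k$ with universal class $u_k \in E^k(B_k)$, approximate $B_k$ by a directed system of compact manifolds $M^n_k$ (e.g.\ tubular neighborhoods of smooth embeddings of finite skeleta into Euclidean space), and fix a coherent family of closed forms $\omega^n_k \in \Omega^k_{d=0}(M^n_k;N)$ representing $\ch(f_n^* u_k)$. Using surjectivity of $R$ in \eqref{eq:les_R_surj} I lift to coherent classes $\hat u^n_k \in \hat E^k(M^n_k)$ and $(\hat u^n_k)' \in (\hat E')^k(M^n_k)$ with this common curvature and underlying class $f_n^* u_k$.

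For $k$ even I argue that these lifts are canonical. By \eqref{eq:les_R_surj}, two lifts of $(u_k, \omega^n_k)$ differ by an element of $E\reals^{k-1}(M^n_k)/\im(\ch)$. The hypothesis $E^{2j+1}(pt) \tensor \rationals = 0$ implies, via the rationally collapsing Atiyah--Hirzebruch spectral sequence, that $E\reals^*(B_k)$ is concentrated in degrees congruent to $k \bmod 2$; in particular $E\reals^{k-1}(B_k) = 0$, killing the indeterminacy. Setting $\Phi(\hat x) := \varphi^*(\hat u^n_k)' + a(\alpha)$ whenever $\hat x = \varphi^*\hat u^n_k + a(\alpha)$ yields a well-defined natural map of sets compatible with $R$, $I$, and $a$. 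The possible failure of additivity is measured by a class in $E\reals^{k-1}(B_k \times B_k)/\im(\ch)$, which by the K{\"u}nneth theorem is also zero in odd total degree, so $\Phi$ is automatically a group homomorphism.

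To handle odd $k$ I use the $S^1$-integration. The axioms in Definition \ref{def:integration_over_S1} imply that $\int_{X \times S^1/X}\colon \hat E^k(X \times S^1) \to \hat E^{k-1}(X)$ is split surjective, with a kernel admitting an explicit description in terms of pullbacks and the $(\id \times t)^*$-action. For $\hat x \in \hat E^k(X)$ with $k$ odd I pick $\hat y \in \hat E^{k+1}(X \times S^1)$ with $\int_{S^1} \hat y = \hat x$ and set $\Phi(\hat x) := \int_{S^1} \Phi(\hat y)$. Well-definedness requires showing that $\int_{S^1} \Phi(\hat w) = 0$ for every $\hat w$ in the kernel of $\int_{S^1}$, which follows from the explicit description of the kernel together with the naturality of the even-degree $\Phi$ and the axiom $\int_{S^1} \circ \, p^* = 0$. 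This definition is forced by the required compatibility with $\int_{S^1}$, so uniqueness in odd degrees is obtained simultaneously.

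Finally, multiplicativity is automatic: the two natural transformations $\hat x \tensor \hat y \mapsto \Phi(\hat x \cdot \hat y)$ and $\hat x \tensor \hat y \mapsto \Phi(\hat x) \cdot \Phi(\hat y)$ (viewed as maps from external products on $X \times X$, pulled back along the diagonal) both satisfy the axioms for a natural transformation of differential extensions and hence agree by the uniqueness just established. The extension to all (possibly non-compact) manifolds only needs the approximations $M^n_k$ to exist with $B_k$ built from countably many cells, which follows from countable generation of $E^*(pt)$. I expect the main technical obstacle to be the coherent construction of the system $(\omega^n_k, \hat u^n_k)$ under pullback $M^{n+1}_k \to M^n_k$ (where an $\invlim^1$-style argument is needed to pass from compatible lifts on each $M^n_k$ to a universal lift), and the careful reduction of additivity to the vanishing of $E\reals^{k-1}(B_k \times B_k)/\im(\ch)$, which requires both the rational evenness of $E$ and finite generation of $E^*(pt)$ to rule out any real-vector-space contributions to this quotient.
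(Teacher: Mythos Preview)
Your outline follows essentially the same strategy as the paper (which in turn defers to \cite{BSuniqueness}): approximate the classifying space $B_k$ by compact manifolds, lift the universal class with prescribed curvature, use rational evenness to kill the odd-degree indeterminacy $E\reals^{k-1}(B_k)/\im(\ch)$ and the additivity obstruction in $E\reals^{k-1}(B_k\times B_k)/\im(\ch)$, and then transport to odd degrees via $S^1$-integration. That skeleton is correct.

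What you miss is precisely the one point the paper's proof is actually about. The paper does not reprove the theorem; it cites \cite{BSuniqueness} for everything except a single relaxation of hypotheses: \cite{BSuniqueness} assumed $E^{2k+1}(pt)=0$, whereas here one only assumes $E^{2k+1}(pt)\otimes\rationals=0$ together with finite generation. The place this matters is exactly your sentence ``approximate $B_k$ by \ldots\ tubular neighborhoods of smooth embeddings of finite skeleta''. For that you need a CW model of $B_k$ with finite skeleta, equivalently that the inductive manifold approximations can be built with finitely many handles at each stage. When $\pi_1(B_k)=E^{k-1}(pt)=0$ this is classical: a simply connected finite CW-complex has finitely generated homotopy groups, so only finitely many cells are needed to correct the next homotopy group. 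Under the weaker hypothesis $\pi_1(B_k)$ is merely \emph{finite} (finitely generated and rationally trivial). The paper's observation is that this still suffices: a finite CW-complex with finite fundamental group has a finite universal cover, which is simply connected, so its higher homotopy groups are again finitely generated. You should insert this argument; without it your approximations $M^n_k$ are not shown to exist as compact manifolds under the stated hypotheses, and you have silently reverted to the stronger assumption of \cite{BSuniqueness}.
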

\begin{proof}
  The proof is given in \cite{BSuniqueness} and we don't plan to repeat it
  here. However, there we made the slightly stronger assumption that
  $E^{2k+1}(pt)=0$ if $\hat E$ is only defined on the category of compact
  manifolds. Let us therefore indicate why the stronger result also holds. As
  described in the strategy, the first task is to approximate the classifying
  spaces $B$ for $E^{2k}$ by spaces on which we can evaluate $\hat E^*$. These
  approximations are constructed inductively by attaching handles to obtain
  the correct 
  homotopy groups (introducing new homotopy, but also killing superfluous
  homotopy). To construct a compact manifold, we are only allowed to
  attach finitely many handles. Therefore, we have to know a priori that we
  have to kill only a finitely generated homotopy groups. In \cite{BSuniqueness}
  we assume that $\pi_1(B)=E^{2k-1}(pt)$ is zero, to be allowed to start with
  a simply connected approximation. Then we use that all homotopy groups of a
  simply connected finite CW-space are finitely generated. However, exactly
  the same holds for finite CW-spaces with finite fundamental group, because
  the higher homotopy groups are the homotopy groups of the universal
  covering, which in this case is a finite simply connected CW-complex. Note
  that a finitely generated abelian group $A$ with $A\tensor \rationals=0$ is
  automatically finite.
\end{proof}

\begin{remark}
  The general strategy leading toward Theorem \ref{theo:uniqueness} has been
  developed by Moritz Wiethaup 2006/07. However, his work has not been
  published yet. This was then taken up and developed further in
  \cite{BSuniqueness}. 
\end{remark}

\subsection{Uniqueness of differential K-theory}
\label{sec:uniqueness_of_K}

We observe that all the assumptions of Theorem \ref{theo:uniqueness} are
satisfied by K-theory, and also by real K-theory. Therefore, we have the
following theorem:

\begin{theorem}\label{theo:uniqueness_of_K}
  Given two differential extensions $\hat K$ and $\hat K'$ of complex
  K-theory, there is a unique natural isomorphism $\hat K^{ev}\to \hat
  {K'}^{ev}$ 
  compatible with all the structure. If the extensions are multiplicative,
  this   transformation is compatible with the products.

  If both extensions come with $S^1$-integration as in Definition
  \ref{def:integration_over_S1} there is a unique natural isomorphism $\hat
  K\to \hat K'$ compatible with all the structure, including the integration. 
\end{theorem}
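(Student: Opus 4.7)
The plan is to apply Theorem \ref{theo:uniqueness} directly, so the main work is to check that complex K-theory satisfies all of its hypotheses. Since $K^*(pt) = \integers[u,u^{-1}]$ with $u$ in degree $-2$, we have $K^{2k+1}(pt) = 0$ for every $k \in \integers$ and $K^{2k}(pt) \iso \integers$. In particular $K$ is rationally even (trivially, since the odd part already vanishes) and each $K^k(pt)$ is finitely generated. The same holds for real K-theory, where $KO^*(pt)$ is a sum of cyclic groups of rank at most one in each degree.

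First I would invoke Theorem \ref{theo:uniqueness} without the $S^1$-integration assumption to obtain a unique natural isomorphism $\hat K^{ev} \to \hat K'^{ev}$ between the even-degree parts, compatible with $R,I,a$. Next, assuming that both $\hat K$ and $\hat K'$ are equipped with $S^1$-integration as in Definition \ref{def:integration_over_S1}, I would apply the full strength of Theorem \ref{theo:uniqueness} to extend this to a unique natural isomorphism $\hat K \to \hat K'$ on all degrees, compatible additionally with the integration transformations. The odd-degree part of the isomorphism is forced by the compatibility with $\int_{X\times S^1/X}$ via the suspension argument sketched in the excerpt: given $\hat x \in \hat K^{\mathrm{odd}}(X)$, one studies its image in $\hat K^{ev}(X\times S^1)$ under external product with a canonical generator and uses $S^1$-integration to recover $\hat x$, forcing the definition of $\Phi$ uniquely.

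For the multiplicative case, I would observe that Theorem \ref{theo:uniqueness} already states that if both $\hat K$ and $\hat K'$ are multiplicative, then the unique natural isomorphism is automatically multiplicative, so nothing extra is required. Moreover, since $K^{-1}(pt) = 0$ is in particular torsion, Theorem \ref{theo:integration_exists} guarantees that a multiplicative $\hat K$ has a canonical $S^1$-integration, which means that in the multiplicative setting the hypothesis on $S^1$-integration in the second part of the theorem is automatic once multiplicativity is assumed.

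There is essentially no genuine obstacle to overcome here — the entire burden of the proof is carried by Theorem \ref{theo:uniqueness} and its companion Theorem \ref{theo:integration_exists} from \cite{BSuniqueness}. The only thing one might want to double-check carefully is the finiteness condition: in the version of Theorem \ref{theo:uniqueness} stated for extensions defined only on compact manifolds, one needs $E^k(pt)$ finitely generated, which holds for both $K$ and $KO$; on the category of all manifolds the weaker countably-generated hypothesis suffices, which is also satisfied. Thus the statement for $\hat K$ follows formally, and the analogous statement for real K-theory follows by the same argument with $KO$ in place of $K$.
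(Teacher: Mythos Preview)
Your proposal is correct and follows exactly the paper's own approach: the paper simply observes that all the assumptions of Theorem~\ref{theo:uniqueness} are satisfied by complex (and real) K-theory and deduces the result. Your verification that $K^*(pt)=\integers[u,u^{-1}]$ makes $K$ rationally even with finitely generated coefficient groups is precisely what is needed, and your remarks on the multiplicative case and the automatic $S^1$-integration via Theorem~\ref{theo:integration_exists} are accurate elaborations.
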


In other words, all the different models for differential K-theory of Section
\ref{sec:models} define the same groups ---up to a  canonical
isomorphism.

\begin{remark}
  In Theorem \ref{theo:uniqueness_of_K} we really have to require the
  existence of $S^1$-integration. In \cite[Theorem 6.2]{BSuniqueness} an
  infinite family of ``exotic'' differential extensions of K-theory are
  constructed. Essentially, the abelian group structure is modified in a
  subtle way in these examples to produce non-isomorphic functors which all
  satisfy the axioms of Section \ref{sec:axioms}.
\end{remark}

\section{Models for differential K-theory}
\label{sec:models}

\subsection{Vector bundles with connection}
\label{sec:vect_with_conn}

The most obvious attempt to construct differential cohomology (at least for
$\hat K^0$) is to use vector bundles \emph{with connection}. It is technically
convenient to also add an odd differential form to the cycles. This, indeed is
the classical picture already used by Karoubi in \cite[Section 7]{MR913964}
for his definition of ``multiplicative K-theory'', which we would call the
flat part of differential K-theory.

\begin{definition}\label{def:vect_with_conn}
  A cycle for vector bundle K-theory $\hat K^0(M)$ is a triple
  $(E,\nabla,\omega)$, where $E$ is a smooth complex Hermitean vector bundle
  over $M$, 
  $\nabla$ a Hermitean connection on $E$ and $\omega\in\Omega^{odd}(M)/{\im(d)}$ a {class of 
a}  differential
  form of odd degree. The \emph{curvature} of a cycle essentially is defined
  as the Chern-Weil representative $\ch(\nabla):=\tr(e^{-\frac{\nabla^2}{2\pi
      i}})$ of the Chern character of $E$, computed 
  using the connection $\nabla$:
  \begin{equation*}
    R(E,\nabla,\omega):= \ch(\nabla)-d\omega.
  \end{equation*}

  \begin{remark}
    We require the use of Hermitean connections to obtain real valued
    curvature forms. Alternatively, one would have to use as target of
    $ch$ cohomology with complex instead of cohomology with real
    coefficients. A slightly more extensive discussion of this matter can 
    be found in \cite[Section 2]{MR1312690}. 
  \end{remark}

We define in the obvious way the sum
  $(E,\nabla_E,\omega)+(F,\nabla_F,\eta):= (E\oplus F,\nabla_E\oplus
  \nabla_F,\omega+\eta)$. Two cycles $(E,\nabla,\omega)$ and
  $(E',\nabla',\omega')$ are equivalent if there is a third bundle with
  connection $(F,\nabla_F)$ and an isomorphism $\Phi\colon E\oplus F\to
  E'\oplus F$ such that {
    \begin{equation*}
     \widetilde{\ch}(\nabla\oplus \nabla_F,\Phi^{-1}(\nabla'\oplus\nabla_F)\Phi) = \omega-\omega', 
  \end{equation*}
  where $
  \widetilde{\ch}(\nabla,\nabla')$ denotes the transgression Chern form between the two connections
 such that $\ch(\nabla)-\ch(\nabla')=  d\widetilde{\ch}(\nabla,\nabla')$}. \end{definition}

\begin{remark}
  In \cite[Section 9]{freed10:_index_theor_in_differ_k_theor}, a model for
  differential $\hat K^1$ is given where the cycles are Hermitean vector bundles
  with connection and a unitary automorphism, and an additional form {(modulo the image of $d$)} of even
  degree. The
  relations include in particular a rule for the composition of the
  unitary automorphisms: if $U_1$
  and $U_2$ are two unitary automorphisms of $E$ then
  $(E,\nabla,U_1,\omega_1)+(E,\nabla,U_2,\omega_2) 
  = (E,\nabla,U_2\circ U_1, CS(\nabla,U_1,U_2)+\omega_1+\omega_2)$, where $CS(\nabla,U_1,U_2)$ is
  the {Chern-Simons} form relating $\nabla$, $U_2\nabla U_2^{-1}$ and
  $U_2U_1\nabla U_1^{-1}U_2^{-1}$. We do not
  discuss this model in detail here. 
\end{remark}

\subsection{Classifying maps}
\label{sec:def_via_classifying_maps}

Hopkins and Singer, in their ground breaking paper \cite{MR2192936} give a
cocycle model of a differential extension $\hat E$ of any cohomology theory
(with transformation) $(E,\ch)$, based on
classifying maps. Here, for the construction of $\hat{{E}}^n(X)$ one has to
choose two fundamental ingredients:
\begin{enumerate}
\item a classifying space $X_n$ for $E^n$; note that no smooth structure for
  $X_{{n}}$ is required (and could be expected)
\item a cocycle $c$ representing $\ch_n$, so that, whenever $f\colon X\to X_n$
  represents a class $x\in E^n(X)$, then $f^*c$ represents $\ch(x)\in
  H^n(X;N)$. We can think of this cocycle as an $N$-valued singular cocycle,
  although variations are possible.
\end{enumerate}

A Hopkins-Singer cycle for $\hat E(X)$ then is a so called \emph{differential
  function}, which by definition is a triple $(f,h,\omega)$ consisting of a
continuous map $f\colon X\to X_n$, a closed differential $n$-form $\omega$ with
values in $N$ and an $(n-1)$-cochain $h$ satisfying
\begin{equation*}
  \delta h=\omega - f^*c.
\end{equation*}
In other words: $f$ is an explicit representative for a class $x\in E^n(X)$,
and we are of course setting $I(f,h,\omega)=[f]\in E^n(X)$. $\omega$
is a de Rham representative of $\ch(x)$, and we are setting
$R(f,h,\omega):=\omega$. This data actually gives two explicit representatives
for $\ch(x)$, namely $\omega$ and $f^*c$ (here, we have to map both $\omega$
and $f^*c$ to a common cocycle model for $H^n(X;N)$ like smooth singular
cochains. $\omega$ defines such a cocycle by the de Rham homomorphism
``integrate $\omega$ over the chain'', $f^*c$
by restriction).

By definition, $(f_1,h_1,\omega_1)$ and $(f_0,h_0,\omega_0)$ are equivalent
if $\omega_0=\omega_1$ and there is $(f,h,\pr^*\omega_1)$ on $X\times [0,1]$
which restricts to the two 
cycles on $X\times\{0\}$ and $X\times \{1\}$.

The advantage of this approach is its complete generality. A disadvantage is
that the cycles don't have a nice geometric interpretation. Moreover,
operations (like the addition and multiplication) rely on the choice of
corresponding maps between the classifying spaces realizing those. These maps
have then to be used to define the same operations on the differential
cohomology groups. This is
typically not very explicit. Moreover, properties like associativity,
commutativity etc.~will not hold on the nose for these classifying maps but
are implemented by homotopies which have to be taken into account when
establishing the same properties for the generalized differential
cohomology. This can quickly get quite cumbersome and we refrain from carrying
this out in any detail.

\subsection{Geometric families of elliptic operators}
\label{sec:geometric_families}

In \cite{bunke-2007}, a cycle model for differential K-theory (there called
``smooth K-theory'') is developed which is based on local index theory. In
spirit, it is similar to the passage of the classical model of K-theory via
vector bundles to the Kasparov KK-model, where all families of generalized
index problems are cycles.

Similarly, the cycles of \cite{bunke-2007} are geometric families of Dirac
operators. It is clear that a lot of differential structure has to be present
to obtain \emph{differential} K-theory, so the definition has to be more
restrictive than in Kasparov's model. There are many advantages of an approach
with very general cycles:
\begin{itemize}
\item first and most obvious, it is very easy to construct elements of
  differential K-theory if one has a broad class of cycles.
\item the approach allows for a unified treatment of even and odd degrees
\item the flexibilities of the cycles allows for explicit constructions in
  many contexts. In particular, it is easy to explicitly define the product
  and also the push-forward along a fiber-bundle.
\end{itemize}

It might seems as a disadvantage that one necessarily has a broad equivalence
relation and that it is therefore hard to construct homomorphisms out of
differential K-theory. To do this, one has to use the full force of local
index theory. However, this is very well developed and one can make use of
many properties as black box and then efficiently carry out the relevant
constructions. 

\begin{definition}\label{def:families_cycles}
Let $X$ be a compact manifold, possibly with boundary.
A cycle for a  $\hat K(X)$ is a pair
$(\cE,\rho)$, where $\cE$ is a geometric family, and $\rho\in
\Omega(X)/\im(d)$ is a class of differential forms.

A geometric family over $X$ (introduced in \cite{MR2191484}) consists of
the following data: 
\begin{enumerate}
\item a proper submersion with closed fibers $\pi\colon E\to X$,
\item a vertical Riemannian metric $g^{T^v\pi}$, i.e.~a metric on the vertical
  bundle $T^v\pi\subset TE$, defined as $T^v\pi:=\ker(d\pi\colon TE\to \pi^*TX)$.
\item a horizontal distribution $T^h\pi$, i.e.~a bundle $T^h\pi\subseteq
  TE$ such that $T^h\pi\oplus T^v\pi=TE$.
\item a family of Dirac bundles $V\to E$,
\item an orientation of $T^v\pi$.
\end{enumerate}
Here, a family of Dirac bundles consists of
\begin{enumerate}
\item a Hermitean vector bundle with connection $(V,\nabla^V,h^V)$ on $E$,
\item a Clifford multiplication $c\colon T^v\pi\otimes V\to V$,
\item on the components where
$\dim(T^v\pi)$ has even dimension a $\integers/2\integers$-grading $z$.
\end{enumerate}
We require that the restrictions of the family Dirac bundles to the fibers
$E_b:=\pi^{-1}(b)$, $b\in X$, give Dirac bundles in the usual sense (see
\cite[Def. 3.1]{MR2191484}):
\begin{enumerate}
\item The vertical metric induces the Riemannian structure on $E_b$,
\item The Clifford multiplication turns $V_{|E_b}$ into a Clifford module (see \cite[Def.3.32]{MR1215720}) which is graded if $\dim(E_b)$ is even.
\item The restriction of the connection $\nabla^V$ to $E_b$ is  a Clifford connection   (see \cite[Def.3.39]{MR1215720}). 
\end{enumerate}

A geometric family is called even or odd, if $\dim(T^v\pi)$ is
even-dimensional or odd-dimensional, respectively, and the form $\rho$ has the
corresponding opposite parity. 
\end{definition}

\begin{example}\label{ex:zero-dim}
  The cycles for $\hat K^0(X)$ of Definition \ref{def:vect_with_conn} are
  special cases of the cycles of Definition \ref{def:families_cycles}:
  $p\colon E\to X$ is just $\id\colon X\to X$, i.e.~the fibers consist of the
  $0$-dimensional manifold $\{pt\}$. 
\end{example}

There are obvious notions of isomorphism (preserving all the structure) and
of direct sum of cycles.
We now introduce {the structure maps $I$ and $R$ and the}  equivalence relation on the semigroup of isomorphism
classes. 

\begin{definition}\label{oppdef} The opposite $\cE^{op}$ of a geometric family
  $\cE$ is obtained by reversing 
the signs of the Clifford multiplication and the grading (in the even case) of
the underlying family of Clifford bundles, and of the orientation of the
vertical bundle.  
\end{definition}

\begin{definition}\label{def:I_fam}
  The usual construction of Dirac type operators of a Clifford bundle (compare
  \cite{MR1031992,MR1215720}), applied fiberwise, assigns to a geometric
  family $\cE$ over $X$ a family of Dirac type operators parameterized by $X$, and
  this is indeed the main idea behind the geometric families. Then, the
  classical construction of Atiyah-Singer assigns to this family its
  (analytic) index $\ind(\cE)\in K^*(B)$, where $*$ is equal to the parity of
  the dimension of the fibers. In the special case of Example
  \ref{ex:zero-dim} ---a vector bundle with connection over $X$--- this is
  exactly the K-theory class of the underlying $\integers/2$-graded vector
  bundle.

  We define $I(\cE,\omega):=\ind(\cE)\in K^*(X)$.
\end{definition}

\begin{remark}
  We define $\cE^{op}$ in such a way that
  $\ind(\cE^{op})=-\ind(\cE)$. Moreover, $\ind$ is additive under sums of
  geometric families. The
  equivalence relation we are going to define will be compatible with $\ind$.
\end{remark}

We now proceed toward the definition of $R(\cE)$. It is based on the notion of
local index form, an explicit de Rham representative of $\ch(\ind(\cE))\in
H^*_{dR}(X)$. It is one of the important points of the data collected in a
geometric family that such a representative can be constructed canonically. 
For a detailed definition we refer to \cite[Def.~4.8]{MR2191484}, but
we  briefly formulate its construction as follows.
The vertical metric $T^v\pi$ and the horizontal distribution $T^h\pi$ together
induce a connection $\nabla^{T^v\pi}$ on $T^v\pi$. 
Locally on $E$ we can assume that $T^v\pi$ has a spin structure. We let
$S(T^v\pi)$ be the associated spinor bundle. Then we can write the family of
Dirac bundles $V$ as  
$V=S\otimes W$ for a twisting bundle $(W,h^W,\nabla^W,z^W)$ with
metric, metric connection, and $\integers/2\integers$-grading which is determined uniquely
up to isomorphism.  
The form
$\hat A(\nabla^{T^v\pi}) \wedge \ch(\nabla^{W})\in \Omega(E)$ is globally defined, and we get the local index form by applying the integration over the fiber $\int_{E/B}\colon \Omega(E)\to \Omega(B)$:
\begin{equation}\label{eq:def_R_fam}
\Omega(\cE):=\int_{E/B}\hat A(\nabla^{T^v\pi}) \wedge \ch(\nabla^{W})\ .
\end{equation}

The characteristic class version of the index theorem for families is 
\begin{theorem} [\cite{MR0279833}]\label{thm2}
$\ch_{dR}(\ind(\cE))=[\Omega(\cE)]\in H^*_{dR}(X)$.
\end{theorem}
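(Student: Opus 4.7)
The plan is to prove this via the local family index theorem \`a la Bismut, which gives a differential-form refinement of the statement that immediately implies the cohomological identity. The strategy is to construct a concrete closed differential form on $X$ that represents $\ch_{dR}(\ind(\cE))$ and then show that its cohomology class coincides with $[\Omega(\cE)]$ by connecting the two via a one-parameter family of closed forms whose cohomology class is constant.

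First I would assemble the analytic objects: let $D$ denote the fiberwise Dirac operator determined by $(V,\nabla^V,c)$ and, following Bismut, form the superconnection $\mathbb{A}_t$ on the (possibly $\integers/2$-graded) infinite-dimensional bundle $\pi_* V \to X$ whose fiber at $b$ is $\Gamma(E_b, V_{|E_b})$. The superconnection is built from $\sqrt{t}\,D$, the connection induced on $\pi_*V$ by $\nabla^V$ and the horizontal distribution $T^h\pi$, and a curvature correction involving the second fundamental form of $T^v\pi$. One shows (Quillen's formalism) that the rescaled Chern character $\ch(\mathbb{A}_t) := \tr_s\bigl(e^{-\mathbb{A}_t^2}\bigr)$ is a closed form on $X$ whose de Rham class is independent of $t>0$.

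The next step is to identify the limits. After perturbing (if necessary) to the situation where $\ker D$ forms a finite-dimensional graded vector bundle on $X$, the $t\to\infty$ limit of $\ch(\mathbb{A}_t)$ is the Chern-Weil representative of $\ch(\ind(\cE))$ computed using the projected connection on $\ker D$. In the general case one argues by passage to a perturbed family or by the standard decomposition of $\pi_*V$ into small and large eigenvalues of $D^2$, yielding the same cohomology class $\ch_{dR}(\ind(\cE))$ since $\ind(\cE)=[\ker D^+]-[\ker D^-]$ in K-theory. For the $t\to 0$ limit, Bismut's local index theorem (via Getzler rescaling applied fiberwise) shows the pointwise convergence of $\ch(\mathbb{A}_t)$ to precisely the integrand of \eqref{eq:def_R_fam}, namely $\int_{E/B} \hat A(\nabla^{T^v\pi})\wedge \ch(\nabla^W)=\Omega(\cE)$; locally trivializing the spin structure on $T^v\pi$ to write $V=S\otimes W$ is harmless because the form one obtains is globally defined.

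The main obstacle is the rigorous treatment of the two limits: the small-$t$ asymptotics of the heat kernel along the fibers require the Getzler rescaling argument carried out uniformly in the parameter $b\in X$, and the large-$t$ behaviour has to be handled in the presence of a possibly non-constant dimensional kernel of $D$. Both issues are well-documented in \cite{MR1215720} (one may invoke them as a black box), and together with the $t$-independence of $[\ch(\mathbb{A}_t)]\in H_{dR}^*(X)$ they yield $\ch_{dR}(\ind(\cE))=[\ch(\mathbb{A}_\infty)]=[\ch(\mathbb{A}_0)]=[\Omega(\cE)]$, which is the desired identity.
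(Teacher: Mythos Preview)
Your proposal is correct and follows exactly the local index theory approach via Bismut superconnections that the paper itself invokes: the paper does not give its own proof but simply cites the original Atiyah--Singer result and points to Bismut's heat-equation proof \cite{MR813584} and the textbook treatment in \cite{MR1215720}. Your outline is a faithful sketch of that argument, so there is nothing to compare or correct.
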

A proof using methods of local index theory has been given in \cite{MR813584},
compare \cite{MR1215720}.

The equivalence relation we impose is based on the following idea: a
geometric family $\cE$ with index $0$ should potentially be equivalent to the
cycle $0$, but in general only up to some differential form (with degree of
shifted parity). Moreover, in this case the local index form $R(\cE)$ will be
exact, but it is important to find an explicit primitive $\eta$ with
$d\eta=R(\cE)$. Therefore, we identify geometric \emph{reasons} why the index
is zero and which provide such a primitive.

\begin{definition}\label{def:taming}
  A pre-taming of $\cE$ is a family $(Q_b)_{b\in B}$ of self-adjoint operators $Q_b\in B(H_b)$
given by a smooth fiberwise integral kernel $Q\in C^\infty(E\times_B E,V\boxtimes V^*)$.
In the even case we assume in addition that $Q_b$ is odd with respect to the grading. 
The pre-taming is called a taming if $D(\cE_b)+Q_b$ is invertible for all
$b\in B$. In this case, by definition $\ind(D(\cE)+Q)$ is zero. However, as the
index is unchanged by the smoothing perturbation $Q$, also $\ind(\cE)=0$ if
$\cE$ admits a taming.
\end{definition}

{
Let $\cE_{t}$ be the notation for geometric family $\cE$ with a chosen taming.}  In
\cite[Def. 4.16]{MR2191484}, the  $\eta$-form
$\eta(\cE_{t})\in \Omega(X)$ is defined. By \cite[Theorem 4.13]{MR2191484}) it
satisfies  
 \begin{equation}\label{detad}
d\eta(\cE_{t})=\Omega(\cE)\ .
\end{equation} 
We skip the considerable analytic difficulties in the construction of the
eta-form and use it and its properties as a black box.

\begin{definition}\label{def:diff_K_families}
  We call two cycles $(\cE,\rho)$ and $(\cE^\prime,\rho^\prime)$ paired if there exists a taming
$(\cE\sqcup_B \cE^{\prime op})_t$ such that
$$\rho-\rho^\prime=\eta((\cE\sqcup_B \cE^{\prime op})_t)\ .$$
We let $\sim$ denote the equivalence relation generated by the relation
"paired".

We define the differential $K$-theory $\hat K^*(B)$ of $B$ to be the group completion of
the abelian semigroup of equivalence classes of cycles as in Definition
\ref{def:families_cycles} with fiber dimension congruent $*$ modulo $2$. 
\end{definition}

\begin{theorem}
  Definition \ref{def:diff_K_families} of $\hat K^*$ indeed defines (with the
  obvious notion of pullback) a contravariant functor on the category of
  smooth manifolds which is a differential extension of K-theory.

  The necessary natural transformations are induced by
  \begin{enumerate}
  \item $I\colon \hat K^*(X)\to K^*(X); \quad (\cE,\omega)\mapsto \ind(\cE)$  of
    Definition \ref{def:I_fam}
  \item $a\colon \Omega^{*-1}(X)/\im(d)\to \hat K^*(X); \quad \omega\mapsto (0,-\omega)$
  \item $R\colon \hat K^*(X)\to \Omega_{d=0}^*(X); \quad (\cE,\omega)\mapsto
    \Omega(\cE)+d\omega$\ of \eqref{eq:def_R_fam}.
  \end{enumerate}
\end{theorem}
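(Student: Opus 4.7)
The plan is to verify well-definedness of the functor and of the natural transformations $I$, $a$, $R$, and then to check in turn each of the axioms \eqref{eq:main_diagr}, \eqref{drgl}, the degree condition on $a$, and the exact sequence \eqref{exax} of Definition \ref{def:axioms}.

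First I would address well-definedness. Pullback is the obvious operation on cycles (pull back the proper submersion together with every piece of fiberwise geometric data and the form class), and is visibly compatible with isomorphisms and with the pairing relation. The natural transformations descend to equivalence classes as follows. If $(\cE,\rho)$ and $(\cE',\rho')$ are paired via a taming of $\cE\sqcup_X\cE'^{\mathrm{op}}$, then $\ind(\cE\sqcup_X\cE'^{\mathrm{op}})=0$ by Definition \ref{def:taming}, hence $\ind(\cE)=\ind(\cE')$; this shows $I$ is well defined. For $R$, the transgression identity \eqref{detad} yields $d(\rho-\rho')=d\eta((\cE\sqcup_X\cE'^{\mathrm{op}})_t)=\Omega(\cE)-\Omega(\cE')$, so $\Omega(\cE)+d\rho=\Omega(\cE')+d\rho'$. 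The axiom \eqref{eq:main_diagr} is essentially the de Rham form of Theorem \ref{thm2}: $\Rham(R(\cE,\rho))=[\Omega(\cE)]=\ch_{dR}(\ind(\cE))=\ch(I(\cE,\rho))$. The relation \eqref{drgl} and the degree of $a$ follow by directly inserting the definitions, using $\Omega(\emptyset)=0$.

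The main work lies in the exact sequence \eqref{exax}. Surjectivity of $I$ is obtained from Example \ref{ex:zero-dim} for the even part (any class in $K^0(X)$ is represented by a $\integers/2$-graded vector bundle) and from standard odd-dimensional Dirac families for $K^1(X)$. The inclusion $\im(a)\subseteq\ker(I)$ is trivial. For $\ker(I)\subseteq\im(a)$, a classical stabilization argument from family index theory shows that $\ind(\cE)=0$ implies that $\cE$ admits a taming $\cE_t$ after adding a trivially-tamed auxiliary family (which contributes only an $a$-term to the cycle). Then $(\cE,\rho)$ is paired with $(\emptyset,\rho-\eta(\cE_t))$ directly via the taming $\cE_t$, since the required identity $\rho-(\rho-\eta(\cE_t))=\eta(\cE_t)$ holds, and $(\emptyset,\sigma)$ lies in $\im(a)$ by the very definition of $a$.

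The main obstacle is exactness at the form term, $\ker(a)=\im(\ch)$. For $\im(\ch)\subseteq\ker(a)$ one represents $x\in K^{*-1}(X)$ as $\ind(\cE)$ and compares two tamings of $\cE\sqcup_X\cE^{\mathrm{op}}$ --- a canonical ``double'' taming with vanishing $\eta$-form and one built from a deformation --- whose $\eta$-form difference is a representative of $\ch(x)$ modulo $\im(d)$. The reverse inclusion is the most delicate point: if $a(\omega)=0$, a chain of pairings (with stabilizations) links $(\emptyset,-\omega)$ to the zero cycle, and each step modifies the form component by an $\eta$-form of a taming of some auxiliary family. The key structural fact, drawn from local index theory, is that any two tamings of a fixed geometric family differ in $\eta$-form, modulo $\im(d)$, by the Chern character of a K-theory class obtained from a spectral-section or cylinder interpolation. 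Accumulating these corrections along the chain expresses $\omega\bmod\im(d)$ as $\ch$ applied to a class in $K^{*-1}(X)$, completing the proof.
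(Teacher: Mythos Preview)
The paper does not give a self-contained proof here; it refers entirely to \cite[Section 2.4]{bunke-2007}. Your outline follows the strategy of that reference: well-definedness via \eqref{detad}, the family index theorem for \eqref{eq:main_diagr}, and the taming/$\eta$-form calculus for the exact sequence \eqref{exax}.

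There is, however, a parity mismatch in your argument for $\im(\ch)\subseteq\ker(a)$. Representing $x\in K^{*-1}(X)$ as $\ind(\cE)$ forces the fibers of $\cE$ to have parity $*-1$, so $\cE\sqcup_X\cE^{\mathrm{op}}$ also has parity $*-1$. Its $\eta$-forms satisfy $d\eta=\Omega(\cE\sqcup_X\cE^{\mathrm{op}})\in\Omega^{*-1}$ and hence lie in $\Omega^{*-2}=\Omega^{*}$, whereas $\ch(x)$ lives in $\Omega^{*-1}/\im(d)$; the two cannot agree. Equivalently, any pairing built from $\cE$ produces relations in $\hat K^{*-1}(X)$, not in $\hat K^{*}(X)$ where $a(\ch(x))$ sits. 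The repair is to work instead with a family $\cF$ of parity $*$ with $\ind(\cF)=0$ admitting two tamings whose spectral-section/index-bundle difference in $K^{*-1}(X)$ equals $x$; the difference of their $\eta$-forms now lies in $\Omega^{*-1}$ and represents $\ch(x)$, and comparing the two resulting pairings of $(\cF,0)$ with $(\emptyset,-\eta)$ gives $a(\ch(x))=0$ in the correct degree. This is precisely the structural fact you already invoke for the reverse inclusion, used in the forward direction.
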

\begin{proof}
  This is carried out in \cite[Section 2.4]{bunke-2007}. 
\end{proof}

\begin{definition}\label{def:produ_fam}
  Given two geometric families $\cE,\cF$ over a base $X$, there is an obvious geometric
  way to define their fiber product $\cE\times_B\cF$, with underlying fiber
  bundle the fiber product of bundles of manifolds etc. Details are spelled
  out in \cite[Section 4.1]{bunke-2007}. We then define the product of two
  cycles $(\cE,\rho)$ and $(\cF,\theta)$ (homogeneous of degree
$e$ and $f$, respectively) as
  \begin{equation*}
    (\cE,\rho)\cup (\cF,\theta):=[\cE\times_B\cF,(-1)^e\Omega(\cE)\wedge
\theta +\rho\wedge \Omega(\cF)-(-1)^ed\rho\wedge \theta]\ .
  \end{equation*}
\end{definition}

\begin{theorem}
  The product of Definition \ref{def:produ_fam} turns $\hat K^*$ into a
  multiplicative differential extension of K-theory.
\end{theorem}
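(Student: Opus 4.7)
The plan is to verify, in order, the four clauses of Definition~\ref{multdef1} for the product of Definition~\ref{def:produ_fam}. First I would check that the product is well defined on isomorphism classes of cycles and is bilinear with respect to direct sum: fiber products of bundles, Dirac data, and orientations are functorial in each variable and distribute over disjoint unions, and the form part is manifestly bilinear. The unit is the cycle $(X\xrightarrow{\id}X,0)$ with trivial Hermitean line bundle and trivial connection (Example~\ref{ex:zero-dim} with $E=\complexs$); since $\Omega$ of this cycle is the constant $1$, plugging it into the formula yields $(\cF,\theta)$ up to canonical isomorphism. Associativity on cycles reduces to associativity of fiber products of families, together with a routine Leibniz-type comparison of the form parts; graded commutativity up to the usual sign should be treated similarly by flipping the orientation of the vertical bundle.

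The real work is showing that $\cup$ descends to $\sim$-equivalence classes. By bilinearity it suffices, for fixed $(\cF,\theta)$, to prove that if $(\cE,\rho)\sim 0$ then $(\cE,\rho)\cup(\cF,\theta)\sim 0$. Unpacking the definition of $\sim$, one has a taming $\cE_t$ of $\cE\sqcup_B\cE'^{op}$ with $\rho-\rho'=\eta(\cE_t)$; by an induction on the steps of the equivalence relation the question reduces to the following: given a taming $\cE_t$, construct a taming of $(\cE\times_B\cF)\sqcup_B(\cE^{op}\times_B\cF)$ whose $\eta$-form equals the prescribed form part. The natural candidate is obtained by tensoring the smoothing perturbation $Q$ producing $\cE_t$ with the identity on the family of fibers of $\cF$; one then needs the multiplicativity-type identity
\begin{equation*}
\eta(\cE_t\times_B\cF)=\eta(\cE_t)\wedge\Omega(\cF)+(\text{correction terms}),
\end{equation*}
proved via the Bismut superconnection formalism used to define the $\eta$-form in \cite[Def.~4.16]{MR2191484}. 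The signs $(-1)^e$ and the term $-(-1)^e d\rho\wedge\theta$ in Definition~\ref{def:produ_fam} are exactly the combinatorial shifts needed for this identification to work; this step is the analytic heart of the argument and is carried out in \cite[Section 4]{bunke-2007}.

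Once well-definedness is established, the remaining clauses are algebraic. Multiplicativity of $R$ is a direct computation: using the multiplicativity of the local index form, $\Omega(\cE\times_B\cF)=\Omega(\cE)\wedge\Omega(\cF)$ (which follows from the multiplicativity of $\hat A(\nabla^{T^v\pi})\wedge\ch(\nabla^W)$ as a Chern–Weil form together with compatibility of $\int_{E/B}$ with products), and applying the Leibniz rule to the form part shows that $R$ of the product equals $(\Omega(\cE)+d\rho)\wedge(\Omega(\cF)+d\theta)$. Multiplicativity of $I$ is the classical fact that the analytic index is multiplicative under fiber products of families of Dirac operators, a standard consequence of the Atiyah–Singer families index theorem (Theorem~\ref{thm2}). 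The module compatibility $a(\omega)\cup x=a(\omega\wedge R(x))$ follows by specialising the product formula to the cycle $(0,-\omega)$ representing $a(\omega)$: with $\Omega(0)=0$, the right-hand side collapses to $-\omega\wedge\Omega(\cF)+(-1)^{e+1}d\omega\wedge\theta$, which modulo $\im(d)$ equals $-\omega\wedge(\Omega(\cF)+d\theta)$, as required.

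The main obstacle is of course the second step: the multiplicativity of $\eta$-forms under fiber products of tamings is a genuine piece of local index theory and not just formal bookkeeping. Everything else is either classical (multiplicativity of the analytic index, of the local index form) or a controlled computation in the de Rham complex. Accepting the analytic input as a black box from \cite{MR2191484}, the verification of Definition~\ref{multdef1} is then systematic.
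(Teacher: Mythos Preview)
Your proposal is correct and matches the paper's approach: the paper itself gives no argument beyond the reference ``This is carried out in \cite[Section 4]{bunke-2007}'', and you have accurately reconstructed the skeleton of that argument, correctly isolating the multiplicativity of $\eta$-forms under fiber products of tamed families as the analytic core and deferring it to the same source. One minor point: in your verification of $a(\omega)\cup x=a(\omega\wedge R(x))$ the sign in front of $d\omega\wedge\theta$ should be $(-1)^{e}$ rather than $(-1)^{e+1}$ when you substitute $\rho=-\omega$ into the product formula, so double-check the bookkeeping with $|\omega|=e-1$ when passing modulo $\im(d)$; the conclusion is unaffected.
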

\begin{proof}
  This is carried out in \cite[Section 4]{bunke-2007}.
\end{proof}

\begin{remark}
  Note that it is much more cumbersome to define a product structure with the
  other models described: for the vector bundle model the inclusion of the odd
  part is problematic, for the homotopy theoretic model one would have to
  realize the product structure using maps between (products of) classifying
  spaces. This is certainly a worthwhile task, but does not seem to be carried
  out in detail so far.

  In the geometric families model, it is also rather easy to construct
  ``integration over the fiber'' and in particular $S^1$-integration as
  defined in \ref{def:integration_over_S1}. This will be explained in Section
  \ref{sec:integration}. 
\end{remark}

\subsection{Differential characters}

One of the first models for a differential extension of integral cohomology
are the differential characters of Cheeger-Simons \cite{MR827262}. A
differential character is a pair $(\phi,\omega)$, consisting of a homomorphism
{ $\phi\colon{Z}_*(X)\to \reals/\integers$
  defined 
on the group of (smooth) singular cycles on $X$ such that for a boundary $b=dc$, $c\in C_{*+1}(X)$
 $\phi(b)=[\int_{c}\omega]_{\reals/\integers}$. }The set of differential characters on $C_*(X)$ is a
model for $\hat H^{*+1}(X)$. Along similar lines, in \cite{MR2231056} the cycle
model for K-homology due to Baum and Douglas \cite{MR679698} ---recently worked
out in detail in \cite{MR2330153}--- is used to define $\hat K^*(X)$ as the
group 
of $\reals/\integers$-valued homomorphism from the set of Baum-Douglas cycles
for K-homology which, on boundaries, are given as pairing with a differential
form. \cite{MR2231056} also define this pairing of a cycle and a differential
form. To do this, in contrast to \cite{MR679698} the cycles have to carry
additional geometric structure.

\subsection{Multiplicative K-theory}

The first work toward differential K-theory probably has been carried out by
Karoubi. We already mentioned his approach to $\reals/\integers$-K-theory
as the flat part of differential K-theory
of \cite{MR913964} with a model based on vector bundles with connection. 

This research has been deepened by Karoubi in subsequent publications
\cite{MR1076525,MR1273841}. The ``multiplicative'' K-theory introduced and
studied there (and related to work of Connes and Karoubi on the
multiplicative character of a Fredholm module \cite{MR972606}, which explains the terminology) is
associated to the usual K-theory and a filtration of the de Rham complex.

This theory looks very much like differential K-theory. With some
normalizations, it seems that one should be able to obtain differential
K-theory from the multiplicative one by considering the filtration of 
the de Rham complex given by the subcomplex of closed differential forms which
are Chern characters of vector bundles (with a connexion) on the given space. 
Karoubi uses multiplicative K-theory to construct characteristic
classes for holomorphic, foliated or flat bundles.

We thank Max Karoubi for explanations of his work on multiplicative
and differential K-theory.

\subsection{Currential K-theory}
\label{sec:subs-cycle-models}

Freed and Lott in \cite[2.28]{freed10:_index_theor_in_differ_k_theor}
introduce a variant of differential K-theory where they replace the
differential forms throughout by currents. This \emph{changes} the theory
in us much as the curvature homomorphism then also takes values in
currents, i.e.~forms are replaced by currents throughout. Because the
multiplication of currents is not well defined, in this new theory one 
loses the ring structure. The advantage of this variant, on the other hand, is
that push-forward maps can be described very directly also for embeddings: for
the differential form part, the image under push-forward will be a current
supported on the submanifold. The currential theory has the advantage that
push-forwards can be defined easily. Moreover, they can be defined for
arbitrary maps. However, pullbacks are not always defined (and not discussed
at all for this theory in
\cite{freed10:_index_theor_in_differ_k_theor}). Consequently, the theory
should (up to a degree shift given by Poincar\'e duality) better be considered
as differential K-homology, twisted by some kind of $\spinc$-orientation
twist.

To describe differential extensions of bordism theories as in
\cite{MR2550094,bunke10:_geomet_descr_of_smoot_cohom}, currents are also used
---but there only as an intermediate tool. The axiomatic setup for those
theories is the one described in \ref{def:axioms}. In particular, the
curvature homomorphism takes values in smooth differential forms.

\subsection{Differential K-theory via bordism}

In \cite[Section 4]{MR2550094}, a geometric model of the differential extension
$\hat{MU}^*$  of $MU^*$ is constructed, where $MU^*$ is 
cohomology theory dual to complex bordism. Cycles are
given by pairs $(\tilde c,\alpha$). Here $\tilde c$ is a geometric cycle for
$MU^n(X)$, {i.e.~a proper smooth map $W\to X$ with $n=\dim(X)-\dim(W)$ and
an  
explicit geometric model of the stable normal bundle with $U(N)$-structure. Moreover, }
$\alpha$ is a differential $(n-1)$-form with distributional coefficients whose
differential differs from a certain characteristic current of $\tilde c$ by a
smooth differential form. We impose on these cycles the equivalence relation
generated by an obvious notion of bordism together with stabilization of the
normal bundle data.

The functor $\hat MU^*$ is a multiplicative extension of complex cobordism. In
particular, it takes values in algebras over $\hat{MU}^{ev}(*)$, as $\hat
MU^{ev}(*)$ is a subring of $\hat MU^*(*)$. However, as $MU^{odd}(*)=0$, the
long exact sequence \eqref{exax} give the canonical isomorphism
\begin{equation*}
\hat
MU^{ev}(*)\xrightarrow{\iso} MU^{ev}(*)= MU^*(*).
\end{equation*}

Assume now that $E^*$ is a generalized cohomology theory which is
\emph{complex oriented}, i.e.~which comes with a natural transformation
$MU^*\to E^*$. An example is complex K-theory, with the usual orientation
$MU\to MSpin^c\to K$. If the complex oriented theory $E^*$ is  Landweber exact (i.e. the condition of 
\cite[Theorem $2.6_{MU}$]{Landweber76} is satisfied),
then Landweber \cite[Corollary 2.7]{Landweber76} proves that $E^*(X)\iso
MU^*(X)\tensor_{MU}E^{*}$ is obtained from $MU^*(X)$ by just taking the tensor
product with the coefficients $E^*$ (in the graded sense). Complex K-theory is
Landweber exact \cite[Example 3.3]{Landweber76} so that this principle
gives a simple bordism definition of K-theory. 

In \cite[Theorem 2.5]{MR2550094} it is shown that Landweber's results extends
directly to differential extensions. If $E^*$ is a Landweber exact complex
oriented generalized cohomology theory, then
\begin{equation*}
  \hat E^*(X):= \hat{MU}^*(X)\tensor_{MU}E
\end{equation*}
is a multiplicative differential extension of $E^*(X)$. Moreover, as explained
above, we 
can apply this to complex K-theory and obtain a bordism description of
differential K-theory:
\begin{equation*}
  \hat K^*(X) = \hat MU^*(X)\tensor_{MU} K.
\end{equation*}
By \cite[Section 2.3]{MR2550094}, this is indeed a multiplicative differential
extension of complex K-theory. Furthermore, it has a natural $S^1$-integration
as in Definition \ref{def:integration_over_S1} and even general ``integration
over the fiber'' as discussed in Section \ref{sec:integration}.

\subsection[Geometric cycle models for $\reals/\integers$-K-theory]{Geometric
  cycle models for $\reals/\integers$-K-theory via the flat 
  part of differential K-theory}

As K-theory satisfies the conditions of Theorem \ref{theo:integration_exists},
any {multiplicative} differential extension canonically comes with a transformation
``integration over $S^1$''. Therefore, by Theorem \ref{theo:flat_is_cohom} {its flat subfunctor} is
a generalized cohomology theory. Moreover, K-theory satisfies the conditions
of Theorem \ref{theo:flat_is_RZ} and we therefore get a natural isomorphism
\begin{equation*}
  \hat K_{flat}\to K\reals/\integers.
\end{equation*}

In other words, any of the models for differential K-theory described above
provides a model for $\reals/\integers$-K-theory. In particular, we recover
the original result \cite[Section 7.21]{MR913964} that the flat part of the
vector bundle model of Section \ref{sec:vect_with_conn} describes
$K\reals/\integers^{-1}$. 

\subsection{Real K-theory}

It {seems to be} not difficult to modify the above models to obtain differential
extensions $\hat{KO}$ of real K-theory. This is particularly clear with the vector bundle
model of Section \ref{sec:vect_with_conn}. Complex Hermitean vector bundles
with Hermitean connection have to be replaced by real Euclidean vector bundles
with metric connection. This describes $\hat{KO}^0$. Of course, $\hat{KO}^*$
now is $8$-periodic. The full model in terms of geometric families, following
Section \ref{sec:geometric_families} replaces the families of Dirac
bundles by their real analogs. {For the analysis of $\eta$-forms it seems to be most suitable to implement the real structures via additional conjugate linear operations on the complex Dirac bundle (as explained e.g.~in \cite[Sec. 2.2]{MR2443109}). Alternatively one could possibly work with 
$\dim(T^v\pi)$-multigradings in the sense of e.g.~\cite[A.3]{MR1817560} (in
other words, a compatible $Cl_k$-module structure, compare also \cite[Section
16]{MR1031992}).  The details of a model for $\hat{KO}$ based on geometric families have still to be worked out. }
 It is actually worthwhile to work out a Real differential
K-theory in the usual sense (i.e.~for spaces with additional
involution). However, we will not carry this out here.

\subsection{Differential K-homology and bivariant differential K-theory}

An important aspect of any cohomology theory is the dual homology theory and
the pairing between the cohomology theory and the homology theory. This
applies in particular to K-theory, where the dual homology theory can be
described with three different flavors:
\begin{itemize}
\item homotopy theoretically, using the K-theory spectrum
\item with a geometric cycle model introduced by Baum and Douglas
  \cite{MR679698}
\item an analytic model proposed by Atiyah \cite{MR0266247} and made precise
  by Kasparov \cite{MR0488027}, compare also \cite{MR1817560}.
\end{itemize}

\begin{definition}
  The cycles for $K_*(X)$ are triples $(M,E,f)$ where $M$ is a closed manifold
  with a given $\spinc$-structure with dimension congruent to $*$ modulo $2$,
  $f\colon M\to X$ is a continuous map and $E\to X$ is a complex vector
  bundle. On the set of isomorphism classes of triples we put the equivalence
  relation generated by bordism, \emph{vector bundle modification} and the relation that $(M,E_1,f)\disjointunion
  (M,E_2,f)\sim (M,E_1\oplus E_2,f)$.

Here, vector bundle modification means that, given a complex vector bundle
$V\to M$, $(M,E,f)\sim (S(V\oplus\reals),T(E),f\circ p)$ where $S(V\oplus
\reals)$ is the sphere bundle of the real bundle $V\oplus \reals$, $p\colon
V\oplus\reals\to M$ is the bundle projection and $T(E)$ is a vector bundle
which (up to addition of a bundle which extends over the disc bundle)
represents the push-forward of $E$ along the ``north pole inclusion'' $i\colon
M\to S(E\oplus \reals); x\mapsto (x,0,1)$. There is an explicit clutching
construction of this bundle.
\end{definition}

It took  over 20 years before in \cite{MR2330153} the equivalence of the
geometric cycle model with the other models has been worked out in full
detail.

In the geometric model, the pairing between K-homology and K-theory has a very
transparent description, related to index theory. Given a K-homology cycle
$(M,E,f)$ and a K-theory cycle represented by the vector bundle $V\to X$, the
pairing is the index of the $\spinc$ Dirac operator $D_{E\tensor f^*V}$ on
$M$,  twisted with $E\tensor f^*V$. In the analytic model, this Dirac operator
by itself essentially already is a K-homology cycle.

The analytic theory has two very important extensions: first of all, it
extends from a (co)homology theory for spaces to one for arbitrary
$C^*$-algebras. ``Extension'' here in the sense that the category of compact
Hausdorff spaces is equivalent to the opposite category of commutative unital
$C^*$-algebras via the functor $X\mapsto C(X)$. Secondly, Kasparov's theory
really is bivariant, with $KK(X,*)=K_*(X)$ being K-homology, and
$KK(*,X)=K^*(X)$ K-theory. Most important is that KK-theory comes
with an associative composition product $KK(X,Y)\tensor KK(Y,Z)\to
KK(X,Z)$. This is an extremely powerful tool with many applications in index
theory and beyond.

In this context, it is very desirable to have differentiable extensions also
of K-homology and ideally of bivariant KK-theory (of course only restricted to
manifolds). This should e.g.~provide a powerful home for refined index
theory. 

\begin{definition}\label{def:bivariant}
  Indeed, guided by the model of differential K-theory, we see that
  differential K-homology or more general $\hat{KK}$, differential bivariant
  KK-theory, should be a 
  functor from smooth manifolds to graded abelian groups with the following
  additional structure:
  \begin{enumerate}
  \item A transformation $I\colon \hat{KK}(X,Y)\to KK(X,Y)$
  \item a transformation $R$ to the appropriate version of differential
    forms. For K-homology, the best bet for this are differential currents
    (essentially the dual space of differential forms) and for the bivariant
    theory the {closed (i.e. commuting with the differentials)}
    continuous homomorphisms from differential forms of the first space to
    differential forms of the second: $R\colon \hat{KK}(X,Y)\to
    \Hom(\Omega(X),\Omega(Y))$.
  \item action of ``forms'': a {degree-$1$} transformation $a\colon
    \Hom(\Omega(X),\Omega(Y))\to \hat{KK}(X,Y)$. 
  \end{enumerate}
  These have to satisfy relations and exact sequences which are direct
  generalizations of those of Definition \ref{def:axioms}:
   \begin{equation*}
  \hat{KK}(X,Y)\xrightarrow{R}[-1]
  \Hom(\Omega^*(X),\Omega^*(Y))[-1]/\im(d)\xrightarrow{a}
  \hat{KK}(X,Y)\xrightarrow{I} 
    KK(X,Y)\to 0
  \end{equation*}
  \begin{equation*}
    \alpha\circ R= d.
  \end{equation*}

  Here, we adopt the tradition that KK-theory is $\integers/2\integers$-graded
  and use the even/odd grading of forms throughout.

  Additionally, there should be a natural associative ``composition product''
  $\hat{KK}(X,Y)\tensor \hat{KK}(Y,Z)\to \hat{KK}(X,Z)$ such that $I$ maps
  this product to the Kasparov product and $R$ to the composition.
\end{definition}

Note that (up to Poincar\'e duality degree shifts), the currential K-theory of
Section \ref{sec:subs-cycle-models} actually is a model for differential
K-homology, which is the specialization of the bivariant theory to
$\hat{KK}(X,*)$.

An early preprint version of \cite{bunke-2007} contains a section which
develops a bivariant theory as in Definition \ref{def:bivariant} and its
applications. The groups are defined in a cycle model where a cycle $\phi$ for
$KK(X,Y)$ consists of
\begin{enumerate}
\item a $\hat K$-oriented bundle $p\colon W\to X$, using $\hat K$-orientations
  as in Section \ref{sec:orientations},
\item a class $x\in \hat K(W)$,
\item a smooth map $f\colon W\to Y$
\item a continuous homomorphism $\Phi\in\Hom(\Omega^*(X),\Omega^*(Y))[-1]$.
\end{enumerate}

On the set of isomorphism classes of cycles one puts the equivalence relation
generated by
\begin{enumerate}
\item compatibility of the sum operations (one given by disjoint union, the
  other by addition in $\hat K(W)$ and $\Hom(\Omega^*(X),\Omega^*(Y))$).
\item bordism
\item a suitable definition of vector bundle modification,
\item change of $\Phi$ by the image of $d$.
\end{enumerate}
Details of such a bivariant construction, or a construction of differential
K-homology seem not to exist in the published literature.
{Note that this approach is not tied to $K$-theory. It provides a construction of a bivariant
differential theory from a differential extension of a cohomology theory together with a theory of differential orientation and integration satisfying a natural set of axioms. }

\section{Orientation and integration}
\label{sec:integration}

\subsection{Differential K-orientations}
\label{sec:orientations}

Let $p\colon W\to B$ a proper submersion with fiber-dimension $n$. An
important aspect of cohomology is a map ``integration along the fiber'' or
``push-forward'' $p_!\colon E^*(W)\to E^{*-n}(B)$. There is a general theory
for this, and it requires the extra structure of an \emph{$E$-orientation} of $p$
(one could also say an $E$-orientation of the fibers of $p$). If $E$ is
ordinary cohomology, this comes down to an ordinary orientation and in de Rham
cohomology $p_!$ literally is ``integration along the fiber''.

For K-theory, it is a $\spinc$-structure 
of $\pi$ which gives rise to a K-theory orientation, which in turn defines a
topological push-forward map $p_!\colon K^*(W)\to K^{*-n}(B)$.

To extend this to \emph{differential} K-theory, one has to add additional
geometric data to the $\spinc$-structure.

This is a pattern which applies to a general differential cohomology theory
$\hat E$:
a differential orientation will consist of an $E$-orientation in the usual
sense together with extra differential and geometric data. 
There is one notable exception, though. In ordinary integral differential
cohomology (Cheeger-Simons differential characters), a topological
orientation (which is an ordinary orientation) lifts uniquely to a
differential orientation. Therefore, in the literature treating ordinary
differential cohomology and push-forward in that context, differential
orientations are not discussed \cite{MR2179587,MR1772521}.

We now describe a model for differential K-orientations for a proper
submersion $p\colon W\to B$ which particularly suits the analytic model for
$\hat K$ of Section \ref{sec:geometric_families}.  {
We will describe the $\integers$-graded version of orientations in order to
make clear in which precise degrees the form constituents live. This is
important if one wants to set up a similar theory in the case of other, non-two-periodic theories like complex bordism.}

Fix an 
 underlying topological K-orientation of $p$ by choosing a $\spinc$-reduction
 of the structure group 
 of the vertical tangent bundle $T^vp$ of $p$. In order to make this precise we
 choose along the way an orientation and a metric on $T^vp$. 

We now consider the set $\cO$ of tuples
$(g^{T^vp},T^hp,\tilde \nabla,\sigma)$ where
\begin{enumerate}
\item $g^{T^vp}$ is the Riemannian metric on the vertical bundle $\ker Tp$
  (and the K-orientation is given by a $\spinc$-reduction of the resulting
  $O(n)$-principal bundle $P$, i.e.~a $\spinc$-principal bundle $Q$ lifting $P$).
\item $T^hp$ is a horizontal distribution.
\item From the horizontal distribution, we get a connection
$\nabla^{T^vp}$ which restricts to the Levi-Civita connection along the
fibers as follows. First one chooses a metric $g^{TB}$
on $B$. It induces a horizontal metric $g^{T^hp}$ via the isomorphism
$dp_{|T^hp}\colon T^hp\stackrel{\sim}{\to}
p^*TB$.  We get a metric $g^{T^vp}\oplus g^{T^hp}$ on $TW\cong T^vp\oplus
T^hp$ which gives rise to a Levi-Civita connection. Its projection to $T^vp$
is $\nabla^{T^vp}$. $\tilde \nabla$ is a $\spinc$-reduction of
$\nabla^{T^vp}$, i.e.~a connection on the $\spinc$-principal bundle $Q$ which reduces
to  $\nabla^{T^vp}$.  If we think of the connections $\nabla^{T^vp}$ and
$\tilde \nabla$ in terms of horizontal distributions $T^hSO(T^vp)$ and $T^hQ$,
then we say that  $\tilde \nabla$ reduces to $\nabla^{T^vp}$ if
$d\pi(T^hQ)=\pi^*(T^hSO(T^vp))$.
\item $\sigma\in \Omega^{-1}(W;\reals[u,u^{-1}])/\im(d)$.  \end{enumerate}

We introduce the globally defined complex line bundle
\begin{equation}\label{eq:def_of_L2}
L^2:=Q\times_{\lambda}\complexs \to W
\end{equation}
 associated to
the $\spinc$-bundle $Q$ via the representation $\lambda\colon Spin^c(n)\to
U(1)$. The connection $\tilde \nabla$ thus induces a connection on
$\nabla^{L^2}$.

We introduce the form 
\begin{equation} \label{uzu111} c_1(\tilde \nabla):=\frac{1}{2}(2\pi i u )^{-1}R^{L^2}  {\in  \Omega^{0}(W;\reals[u,u^{-1}])}.\end{equation} 
Let $R^{\nabla^{T^vp}}\in \Omega^2(W,\End(T^vp))$ denote the curvature of $\nabla^{T^vp}$.
The closed form
$$\hA(\nabla^{T^vp}):={\det}^{1/2}\left(
\frac{
\frac{ u^{-1}R^{\nabla^{T^vp}}}{4\pi}}
{\sinh\left(\frac{u^{-1}R^{\nabla^{T^vp}}}{4\pi}\right)}\right){\in  \Omega^{0}(W;\reals[u,u^{-1}])}
$$
represents the $\hA$-class of $T^vp$. 
\begin{definition}\label{uzu3} The relevant differential form for local index theory
  in the $\spinc$-case is
  \begin{equation}\label{eq:A_hat_c}
\hA^c(\tilde \nabla):=\hA(\nabla^{T^vp})\wedge e^{c_1(\tilde \nabla)}\ .
\end{equation}
\end{definition}
If we consider $p\colon W\to B$ with the geometry $(g^{T^vp},T^hp,\tilde \nabla)$ and the Dirac bundle
$S^c(T^vp)$ as a geometric family $\cW$ over $B$, then {
$$\int_{W/B} \hA^c(\tilde \nabla)=\Omega(\cW)\in \Omega^{-n}(B;\reals[u,u^{-1}])\ ,$$} is the local index form of $\cW$. 

We introduce a relation $\sim$ on $\cO$:
Two tuples
$(g_i^{T^vp},T_i^hp,\tilde \nabla_i,\sigma_i)$, $i=0,1$ are related if and only if
$\sigma_1-\sigma_0=\tilde \hA^c(\tilde \nabla_1,\tilde \nabla_0)$, the
transgression form of $\hA^c(\tilde\nabla)$. In
\cite[Section 3.1]{bunke-2007}, it is shown that
$\sim$ is an equivalence relation.

\begin{definition}\label{smmmmzuz}
The set of differential $K$-orientations which refine a fixed underlying topological
$K$-orientation
of $p\colon W\to B$ is the set of equivalence classes $ \cO/\sim$.
{The vector space }
$\Omega^{-1}(W;\reals[u,u^{-1}])/\im(d)$ acts on the set of differential
$K$-orientations by translation of the differential form entry.
\end{definition}

In \cite[Corollary 3.6]{bunke-2007} we show:
\begin{proposition}\label{corol:orient_as_torsor}
The set of differential $K$-orientations refining a fixed underlying topological $K$-orientation is a torsor over
$\Omega^{-1}(W;\reals[u,u^{-1}])/\im(d)$.
\end{proposition}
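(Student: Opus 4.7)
The plan is to verify that the prescribed translation action of $V:=\Omega^{-1}(W;\reals[u,u^{-1}])/\im(d)$ on $\cO/\!\sim$ is well-defined, free, and transitive. Concretely, given $\tau\in V$ we consider the map
\begin{equation*}
\tau\cdot [(g^{T^vp},T^hp,\tilde\nabla,\sigma)] := [(g^{T^vp},T^hp,\tilde\nabla,\sigma+\tau)].
\end{equation*}

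Well-definedness is immediate from the definition of $\sim$: if $\sigma_1-\sigma_0=\tilde\hA^c(\tilde\nabla_1,\tilde\nabla_0)$, then $(\sigma_1+\tau)-(\sigma_0+\tau)$ equals the same transgression form, and the translated tuples remain equivalent. For freeness, suppose $\tau$ fixes $[(g^{T^vp},T^hp,\tilde\nabla,\sigma)]$. Then $(g^{T^vp},T^hp,\tilde\nabla,\sigma+\tau)\sim(g^{T^vp},T^hp,\tilde\nabla,\sigma)$, so the defining condition forces $\tau=-\tilde\hA^c(\tilde\nabla,\tilde\nabla)$. Taking the constant path of geometric data as the interpolation used to compute this transgression, the integrand is pulled back from $W$ and has no $[0,1]$-component, so its fibre integral vanishes and $\tau=0$ in $V$.

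Transitivity is the main content. Given two classes $[(g_i^{T^vp},T_i^hp,\tilde\nabla_i,\sigma_i)]$, $i=0,1$, refining the same topological $K$-orientation, set
\begin{equation*}
\tau := \sigma_1 - \sigma_0 - \tilde\hA^c(\tilde\nabla_1,\tilde\nabla_0) \in V.
\end{equation*}
Then $(g_0^{T^vp},T_0^hp,\tilde\nabla_0,\sigma_0+\tau)\sim(g_1^{T^vp},T_1^hp,\tilde\nabla_1,\sigma_1)$ by construction, so $\tau$ sends the first class to the second. To make sense of $\tilde\hA^c(\tilde\nabla_1,\tilde\nabla_0)$ when the underlying geometric data differ, one exploits that the space of tuples $(g^{T^vp},T^hp,\tilde\nabla)$ refining a given $\spinc$-reduction is convex in an iterated sense: vertical metrics form a convex cone, horizontal distributions are an affine space over $\Hom(p^*TB,T^vp)$, and the compatible $\spinc$-connections on $Q$ form an affine space. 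Any smooth path $(g_t^{T^vp},T_t^hp,\tilde\nabla_t)_{t\in[0,1]}$ defines a geometric family over $[0,1]\times B$ carrying a canonical $\hA^c$-form on $[0,1]\times W$, and fibre integration along the $[0,1]$-factor yields a form in $\Omega^{-1}(W;\reals[u,u^{-1}])$ representing $\tilde\hA^c(\tilde\nabla_1,\tilde\nabla_0)$.

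The main technical obstacle is path-independence of the resulting class modulo $\im(d)$. This is a Stokes argument: given two interpolating paths, connect them by a two-parameter family (possible by contractibility of the space of data), apply $d\hA^c=0$, and fibre-integrate over the resulting square; the boundary contributions give precisely the difference of the two candidate transgressions up to an exact form. The same computation on a triangle yields the cocycle identity
\begin{equation*}
\tilde\hA^c(\tilde\nabla_2,\tilde\nabla_0)\equiv\tilde\hA^c(\tilde\nabla_2,\tilde\nabla_1)+\tilde\hA^c(\tilde\nabla_1,\tilde\nabla_0)\pmod{\im(d)},
\end{equation*}
which is also what underlies the fact that $\sim$ is already an equivalence relation. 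Rather than redoing this here, one quotes the verification of both facts from \cite[Section~3.1]{bunke-2007}, and the torsor structure follows.
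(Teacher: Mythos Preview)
The paper does not actually prove this proposition; it merely states it and cites \cite[Corollary 3.6]{bunke-2007}. Your argument is correct and is the natural reconstruction of that result: once one knows (as quoted from \cite[Section 3.1]{bunke-2007}) that the relation $\sigma_1-\sigma_0=\tilde\hA^c(\tilde\nabla_1,\tilde\nabla_0)$ is already an equivalence relation and that the transgression class in $\Omega^{-1}(W;\reals[u,u^{-1}])/\im(d)$ is path-independent and satisfies the cocycle identity, freeness and transitivity of the translation action are immediate exactly as you write them. One tiny cosmetic point: in your freeness step the sign should read $\tau=\tilde\hA^c(\tilde\nabla,\tilde\nabla)$ rather than $-\tilde\hA^c(\tilde\nabla,\tilde\nabla)$, but since this vanishes anyway it does not affect the argument.
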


\begin{remark}
  Proposition \ref{corol:orient_as_torsor} should generalize to differential
  extensions of a general cohomology theory $E$: the differential orientations
  refining a fixed $E$-orientation should form a torsor over
  $\Omega^{-1}(W;E\reals)$. If we apply this to the special case of ordinary
  cohomology, this group is trivial as the coefficients are concentrated in
  degree $0$. This explains why there is a unique lift of an ordinary
  orientation to a differential orientation in case of ordinary cohomology.
\end{remark}

\subsection{Integration}
\label{sec:integration_map}

We have set up our model for differential K-orientations in such a way that we
have a natural description of the integration homomorphism in differential
K-theory using the analytic
model of Section \ref{sec:geometric_families}. { 
From now on we reduce to the two-periodic case (by setting $u=1$).
} 

Given $p\colon W\to B$ with a differential K-orientation as in Section \ref{sec:orientations} and a
class $x=[\cE,\rho]\in \hat K(W)$, the basic idea for the construction of
$p_!(x)\in \hat K(B)$ is to
form the representative for $p_!(x)$ as the geometric family $p_!\cE$ obtained by
simply composing the family over $W$ with $p$ to obtain a family over $B$. The
geometry on $p$ given by the differential orientation allows to put the required geometry on the composition in a
canonical way. For example, the fiberwise metric is obtained as direct sum of
the fiberwise metric on $\cE$ and on the fibers of $p$. We omit the details which are given in \cite[Section
3.2]{bunke-2007}.

 The
main remaining question is to define the correct differential form; this
requires the study of adiabatic limits.
Indeed, in the construction of the geometry on $p_!\cE$ we can introduce an
additional parameter $\lambda\in (0,\infty)$ by scaling the metric on the
fibers of $\cE$ by $\lambda^2$ (and adjusting the remaining geometry) to
obtain $p_!^\lambda\cE$. In
total, this gives an adiabatic deformation family $\cF$ over $(0,\infty)\times
B$ which restricts to $p_!^\lambda\cE$ on $\{\lambda\}\times B$. 

Although the vertical metrics of $\cF$ and $p^\lambda_!\cE$ collapse as $\lambda\to 0$ the induced connections and the curvature tensors
on the vertical bundle $T^vq$ converge and simplify in this limit. This fact
is heavily used in local index theory, and we refer to \cite[Sec 10.2]{MR1215720}
for details. In particular, the integral 
\begin{equation}\label{dtqdutwqdwqdq}
\tilde \Omega(\lambda,\cE):=\int_{(0,\lambda)\times B/B}\Omega(\cF)\end{equation}
converges.

We now define
\begin{equation}\label{eq300}
\hat p_!(\cE,\rho):=[p_!\cE,\int_{W/B}  \hA^c(\tilde\nabla)\wedge \rho + \tilde
\Omega(1,\cE)+\int_{W/B}\sigma \wedge R([\cE,\rho])]\in \hat K(B)\ .
\end{equation}

This push-forward has all the expected properties, listed below. The proofs use the explicit
model and a heavy dose of local index theory {in order to verify that this  cycle-level construction is compatible with the equivalence relation involving tamings and $\eta$-forms}. They can be found in
\cite[Sections 3.2, 3.3, 4.2]{bunke-2007}.

\begin{theorem}\label{theo:properties_of_integration}
  The push-forward/integration for differential K-theory defined in
  \eqref{eq300} has the following properties.  
  \begin{enumerate}
  \item Given a proper submersion $p\colon W\to B$ with differential
    K-orientation and with fiber dimension $n$, the differential push-forward
    is $\hat p_!\colon \hat K^*(W)\to \hat K^{*-n}(B)$ a well defined
    homomorphism which only depends on the differential orientation, not its
    particular representative.
  \item If $q\colon Z\to W$ is another proper submersion with differential
    K-orientation, there is a canonical way to put a composed differential
    K-orientation on the composition $p\circ q$, and push-forward is
    functorial: $\hat{(p\circ q)}_! = \hat p_!\circ \hat q_!$.
  \item Fix a Cartesian diagram
    \begin{equation*}
      \begin{CD}
        W'  @>F>> W\\
        @VV{f^*p}V       @VVpV\\
        B' @>f>> B
      \end{CD}
    \end{equation*}
    where as before $p\colon W\to B$ is a proper submersion with differential
    K-orientation. There is a canonical way to pull back the differential
    K-orientation of $p$ to $p':=f^*p$. One then has compatibility of
    pull-back and push-forward
    \begin{equation*}
      f^*\hat p_! = \hat p'_! F^*.
    \end{equation*}
  \item The projection formula relating push-forward and product holds in
    differential K-theory
    \begin{equation*}
      \hat p_!(p^*y\cup x) = y\cup \hat p_!(x);\qquad x\in \hat K(W),\; y\in
      \hat K(B).
    \end{equation*}
  \item The differential push-forward is via the structure maps compatible
    with the push-forward in ordinary K-theory and on differential
    forms. However, for the differential forms we have to define the modified
    integration, depending on the differential K-orientation $o$, as
    \begin{equation*}
      p^o_!\colon \Omega(W)\to \Omega(B);\; \omega\mapsto \int_{W/B} (
      \hA^c(\tilde\nabla)-d\sigma)\wedge \omega.
    \end{equation*}
    Indeed, this does not depend on the representative of the differential
    K-orientation. Moreover, it induces $p^o_!\colon \Omega(W)/\im(d)\to
    \Omega(B)/\im(d)$. With this map, we get commutative diagrams
    \begin{equation}\label{uppersq}
      \begin{CD}
        K(W) @>{\ch}>> \Omega(W)/\im(d) @>a>>\hat K(W) @>{I}>> K(W)\\
        @VV{p_!}V     @VV{p^o_!}V     @VV{\hat p_!}V     @VV{p_!}V \\
        K(B) @>{\ch}>> \Omega(B)/\im(d) @>a>>\hat K(B) @>{I}>> K(B)\\
      \end{CD}
    \end{equation}
    \begin{equation}\label{lowersq}
      \begin{CD}
        \hat K(W) @>{R}>> \Omega_{d=0}(W)\\
        @VV{\hat p_!}V      @VV{p^o_!}V \\
        \hat K(B) @>{R}>> \Omega_{d=0}(B)
      \end{CD}
    \end{equation}
  \end{enumerate}
\end{theorem}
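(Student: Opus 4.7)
The plan is to treat the five parts in the listed order, establishing well-definedness first and then bootstrapping the other compatibilities. Throughout one works at the level of cycles $(\cE,\rho)$ and the explicit formula \eqref{eq300}, reducing each claim to an identity between differential forms on $B$ that can be verified using the local index theory black box of \cite{MR2191484}, in particular the transgression formula for local index forms and the variation/gluing formulas for $\eta$-forms.

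First I would verify that \eqref{eq300} descends through the equivalence relation defining $\hat K$ and through the equivalence relation on differential $K$-orientations. For the former, if $(\cE,\rho)\sim(\cE',\rho')$ via a taming $(\cE\sqcup_B\cE'^{op})_t$ with $\rho-\rho'=\eta((\cE\sqcup_B\cE'^{op})_t)$, the taming transports canonically to a taming of the pushed family $p_!\cE\sqcup_B p_!\cE'^{op}$, and Bismut's adiabatic limit formula identifies the resulting $\eta$-form with $\int_{W/B}\hA^c(\tilde\nabla)\wedge\eta((\cE\sqcup_B\cE'^{op})_t)$ up to the exact correction supplied by $\tilde\Omega(1,\cdot)$. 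Invariance under change of differential orientation representative uses the defining transgression relation $\tilde{\hA^c}(\tilde\nabla_1,\tilde\nabla_0)=\sigma_1-\sigma_0$ from the equivalence relation on $\cO$, so that the contributions $\int_{W/B}\hA^c(\tilde\nabla)\wedge\rho$ and $\int_{W/B}\sigma\wedge R([\cE,\rho])$ change by matching amounts modulo $\im(d)$.

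The structure map compatibility (5) is essentially automatic once well-definedness is in place. The identity $I\circ\hat p_!=p_!\circ I$ is the Atiyah--Singer family index theorem applied to $p_!\cE$; the identity $R\circ\hat p_!=p^o_!\circ R$ reduces to the local index theorem $\Omega(p_!\cE)=\int_{W/B}\hA^c(\tilde\nabla)\wedge\Omega(\cE)$ combined with the Stokes identity $d\tilde\Omega(1,\cE)=\Omega(p_!\cE)-\int_{W/B}\hA^c(\tilde\nabla)\wedge\Omega(\cE)$ for the adiabatic deformation family $\cF$ of \eqref{dtqdutwqdwqdq}; and $\hat p_!\circ a=a\circ p^o_!$ follows by direct substitution $(\cE,\rho)=(0,-\omega)$ in \eqref{eq300}. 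Base change (3) is equally formal: all data entering \eqref{eq300} (geometric family, Dirac bundle, vertical metric, horizontal distribution, $\spinc$-connection, form $\sigma$) pulls back along $f$, and so do tamings and $\eta$-forms, giving $f^*\hat p_!=\hat p'_!F^*$ already at the cycle level.

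Functoriality (2) and the projection formula (4) are the steps I expect to be hardest, since the adiabatic limit now intervenes in a nontrivial way. For composition, one compares the two-step push-forward $\hat p_!\hat q_!$ with $\widehat{(p\circ q)}_!$ by introducing a two-parameter adiabatic family and invoking the additivity of $\eta$-forms under adiabatic splitting proved via Bismut superconnection techniques; the matching of the $\tilde\Omega$-contributions is precisely the content of this additivity, while the $\sigma$-contributions combine by functoriality of $\hA^c$ under composition of $\spinc$-submersions. For the projection formula, one checks on cycles that $p_!(p^*\cF\times_B\cE)\iso\cF\times_B p_!\cE$ as geometric families, pushes this through \eqref{eq300}, and observes that the differential-form corrections rearrange by $R(\cF)\wedge(-)$-linearity of fiber integration. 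The main obstacle throughout is keeping track of the adiabatic-limit corrections, and the proof depends crucially on the availability of Bismut's adiabatic limit formulas for $\eta$-forms as a black box from \cite{MR1215720,MR2191484}.
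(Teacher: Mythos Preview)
Your plan is correct and matches the paper's approach: the paper itself gives no details beyond noting that ``the proofs use the explicit model and a heavy dose of local index theory in order to verify that this cycle-level construction is compatible with the equivalence relation involving tamings and $\eta$-forms'' and refers everything to \cite[Sections 3.2, 3.3, 4.2]{bunke-2007}, which is precisely the cycle-level-plus-adiabatic-limit strategy you outline. One small imprecision worth flagging: the sentence ``the taming transports canonically to a taming of the pushed family $p_!\cE\sqcup_B p_!\cE'^{op}$'' hides real work---a taming over $W$ does not literally give a taming of the pushed family at scale $\lambda=1$, but only in the adiabatic regime $\lambda\to 0$, and it is the Bismut--Cheeger adiabatic limit theorem that turns this into the required $\eta$-form identity; you clearly have this in mind, but the phrasing understates where the analysis enters.
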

\subsection{$S^1$-integration}
\label{sec:suspension}

We consider the projection $\pr_1\colon B\times S^1\to B$. 

The projection $\pr_1$  fits into the Cartesian diagram
\begin{equation*}
  \begin{CD}
   B\times S^1 @>{\pr_2}>> S^1\\
    @VV{\pr_1}V @VV{p}V\\
    B @>r>> \{*\}\ .  \end{CD}
\end{equation*}

We choose the metric $g^{TS^1}$ of unit volume and the  bounding spin structure
on $TS^1$. 
This spin structure induces a $\spinc$ structure on $TS^1$ together with the connection $\tilde \nabla$.
In this way we get a representative $o$ of a differential $K$-orientation of $p$. 
By pull-back we get the representative $r^*o$ of a differential $K$-orientation of
$\pr_1$ which is used to define $(\hat\pr_1)_!$.

\begin{definition}
  We define $S^1$-integration for differential K-theory as in Definition \ref{def:integration_over_S1}
  simply by setting
  \begin{equation*}
    \int_{B\times S^1/B}:=(p_1)_! \colon \hat K^*(B\times S^1)\to \hat
    K^{*-1}(B)
  \end{equation*}
  where we use the differential K-orientation of $p_1\colon B\times S^1\to B$
  just described. Note that by Theorem \ref{theo:properties_of_integration} it
  has the properties required of $S^1$-integration.
\end{definition}

By \cite[Corollary 4.6]{bunke-2007} we get
\begin{equation*}
  (\hat \pr_1)_!\circ \pr_1^* =0.
\end{equation*}

\section{Index theory and natural transformations}

It is well established that index theory of elliptic operators is closely
related to K-theory and K-homology. Indeed, this is the reason why the
analytic model of Section \ref{sec:geometric_families} can work at all. For a
fiber bundle $p\colon W\to B$ with 
K-orientation, the push-forward in K-theory can be interpreted as the family
index of the fiberwise $\spinc$ Dirac operator, twisted with the bundle
representing the K-theory class.
Of course, one might define the push-forward in a different way ---then this
is a somewhat abstract statement.

Continuing on this formal level, the Chern character provides a way to compute
K-theory in terms of cohomology. Now there is also the push-forward in
cohomology. It is well known that Chern character and push-forward are not
compatible. The Riemann-Roch formula provides the {appropriate correction}. In some
sense, a Riemann-Roch theorem therefore is a cohomological index theorem.

On this level, we will now find a lift of index theory to differential
K-theory. In Section \ref{sec:integration} we have discussed the push-forward in
differential K-theory. We will now describe how to lift the Chern character to
a natural transformation from differential K-theory to differential cohomology
and will then discuss a differential Riemann-Roch theorem correcting the
defect that this Chern character is not compatible with integration. 

A further refinement of this theorem is obtained by a direct analytic definition
of a (family) index with values in differential K-theory, given in a natural
way for geometric families of elliptic index problems. The goal of an index
theorem is then to find a topological formula for this index, i.e.~a formula
which does not involve the explicit solution of differential equations. This
has indeed been achieved by Lott and Freed in
\cite{freed10:_index_theor_in_differ_k_theor} and we discuss the details in
Section \ref{sec:diff_AS}.

\subsection{Differential Chern character}

The classical Chern character has two fundamental properties. First, it is a
certain characteristic class of vector bundles. As such, it is a certain
explicit (rational) polynomial $p_{\ch}(c_1,c_2,\dots)$ in the Chern classes
of the vector bundle. Secondly, it tuns out that this characteristic class is
compatible with direct sum and stabilization and with Bott periodicity in the
appropriate way to define a natural transformation of cohomology theories
\begin{equation*}
  \ch\colon K^*{(\cdot)} \to H^*(\cdot;\rationals[u,u^{-1}]);\qquad \text{($u$ of
    degree $2$)}.
\end{equation*}
Finally, after tensor product with $\rationals$, $\ch\tensor\id_\rationals$
becomes an isomorphism {for finite $CW$-complexes}.

We demand that the Chern character for differential K-theory should display
the same properties: be implemented as characteristic class of vector bundles
with connection, and then pass to a natural transformation between
differential cohomology theories.

\subsubsection{Characteristic classes in differential cohomology}
\label{sec:diff_charac_class}

Early on, differential cohomology is closely related to
characteristic classes of vector bundles with connection. Indeed, $\hat
H^2(X;\integers)$ even is isomorphic to the set of isomorphism classes of
complex line
bundles with Hermitean connection. This isomorphism is implemented by the
differential version of the first Chern class.

Elaborating on this, Cheeger and Simons \cite[Section 4]{MR827262} construct
differential Chern classes of vector bundles with Hermitean connection with
values in integral differential cohomology
\begin{equation*}
  \hat{c}_k(E,\nabla) \in \hat H^{2k}(X;\integers)
\end{equation*}
with the following properties:
\begin{enumerate}
\item \label{item:ck_nat}naturality under pullback with smooth maps;
\item\label{item:ck_comp} compatibility with the classical Chern classes:
  \begin{equation*}
    I(\hat c_k(E,\nabla)) = c_k(E)\in H^{2k}(X;\integers);
  \end{equation*}
\item\label{item:CW_comp} compatibility with Chern-Weil theory
  \begin{equation*}
    R(\hat c_k(E,\nabla)) = C_k(\nabla^2) \in \Omega^{2k}(X)
  \end{equation*}
  where $C_k(\nabla)$ is the Chern-Weil form of the connection $\nabla$
  associated to the invariant polynomial $C_k$ which is (up to a scalar) the
  elementary symmetric polynomial in the eigenvalues.
\end{enumerate}
Moreover, Cheeger and Simons show that the differential Chern classes are
uniquely determined by these requirements. The proof is reminiscent of (and
indeed inspired) the proof of uniqueness in Section \ref{sec:Uniqueness}. They
use the universal example of $\complexs^n$-vector bundles with connection
---known to exist by \cite{MR0133772}. Once the differential Chern classes are
chosen for the universal example, they are determined by naturality for every
$(E,\nabla)$. The integral cohomology of the base space $BU(n)$ of the
universal example is concentrated in even degrees. The long exact
sequence~\eqref{eq:les_R_surj} then implies that $I\oplus R\colon \hat
H^{2k}(BU(n);\integers)\oplus \Omega_{d=0}^{2k}(BU(n))$ is injective, so that 
\ref{item:ck_comp} and \ref{item:CW_comp} determine the universal $\hat c_k$,
and they exist by the defining properties of $C_k(\nabla^2)$. One only has to
check that the construction is independent of the choice of the universal
model, which essentially follows from uniqueness. Note that $BU(n)$ is not
itself a finite dimensional manifold so that one has (as usual) to work with
finite dimensional approximations.

Differential ordinary cohomology is defined with coefficients in any subring
of $\reals$, and the inclusion of coefficient rings induces natural maps with
all the expected compatibility relations. In particular, we have differential
cohomology with coefficients in $\rationals$, $\hat H^*(\cdot;\rationals)$,
taking values in the category of $\rationals$-vector spaces, and commutative diagrams
\begin{equation*}
  \begin{CD}
    \hat H(\cdot;\integers) @>>> \hat H(\cdot;\rationals)\\
    @VV{R}V  @VV{R}V\\
    \Omega(\cdot) @>{=}>> \Omega(\cdot)
  \end{CD};\qquad
  \begin{CD}
    \hat H(\cdot;\integers) @>>>\hat H(\cdot;\rationals)\\
    @VV{I}V @VV{I}V\\
    H^*(\cdot;\integers) @>>> H^*(\cdot;\rationals).
  \end{CD}
\end{equation*}
Expressing the classical Chern character (uniquely) as a rational polynomial
in the Chern classes
\begin{equation*}
  \ch(E) = P_{\ch}(c_1(E),c_2(E),\dots);\quad\text{with } P_{\ch}\in\rationals[[x_1,x_2,\dots]],
\end{equation*}
we now define the differential Chern character
\begin{equation}\label{eq:def_of_diff_Chern_char}
  \hat{\ch}(E,\nabla):= P_{\ch}(\hat c_1(E,\nabla),\dots)\in \hat H^{2*}(X;\rationals).
\end{equation}

By construction, this is a natural characteristic class satisfying
\begin{equation*}
  \begin{split}
    I(\hat{\ch}(E,\nabla)) &= \ch(E) \in H^{2*}(X;\rationals)\\
    R(\hat{\ch}(E,\nabla)) &= P_{\ch}(C_1(\nabla^2),\dots) =
    \tr({-}\exp(\nabla^2/2\pi i)) \in \Omega^{2*}(X).
  \end{split}
\end{equation*}

\subsubsection{Differential Chern character transformation}

The second point of view of the differential Chern character is as a natural
transformation between differential K-theory and differential
cohomology. Indeed, we prove in \cite[Section 6]{bunke-2007} the following
theorem.

\begin{theorem}
  There is a unique natural transformation of differential cohomology theories
  \begin{equation*}
    \hat{\ch}\colon \hat K^*(\cdot)\to \hat H^*(\cdot; \rationals[u,u^{-1}])
  \end{equation*}
  with the following properties:
  \begin{enumerate}
  \item Compatibility with the Chern character in ordinary cohomology and with
    the action of forms,
    i.e.~the following diagram commutes:
    \begin{equation*}
      \begin{CD}
        \Omega(X)/\im(d) @>a>> \hat K(X) @>I>> K(X)\\
        @VV{=}V @VV{\hat{\ch}}V @VV{\ch}V\\
          \Omega(X)/\im(d) @>a>> \hat H(X;\rationals[u,u^{-1}]) @>I>>
          H(X:\rationals[u,u^{-1}]) \ .
      \end{CD}
    \end{equation*}
\item Compatibility with the curvature homomorphism, i.e.~the following
  diagram commutes
  \begin{equation*}
    \begin{CD}
      \hat K(X) @>R>> \Omega_{d=0}(X;\reals[u,u^{-1}])\\
      @VV{\hat{\ch}}V @VV=V\\
      \hat H(X;\rationals[u,u^{-1}]) @>R>> \Omega_{d=0}(X;\reals[u,u^{-1}]) \ .
    \end{CD}
  \end{equation*}
\item Compatibility with suspension, i.e.~the following diagram commutes
  \begin{equation*}
    \begin{CD}
      \hat K(S^1\times X) @>{\hat{\ch}}>> \hat H(S^1\times X;\rationals[u,u^{-1}])\\
      @VV{{\pr_2}_!}V @VV{{\pr_2}_!}V\\
      \hat K(X) @>{\hat{\ch}}>> \hat H(X;\rationals[u,u^{-1}]).
    \end{CD}
  \end{equation*}
  Here, we use the differential K-orientation of $\pr_2$ as in Section
  \ref{sec:suspension}. 
  \end{enumerate}

Moreover, if $x\in\hat K^0(X)$ is represented (as in \ref{sec:vect_with_conn}
or \ref{sec:geometric_families})  by the zero-dimensional family
$((E,\nabla),\rho)$ (with $\rho$ the additional form of odd degree), then
\begin{equation*}
  \hat{\ch}(x) = \hat{\ch}(E,\nabla)+a(\rho) \in \hat H^{2*}(X;{\rationals[u,u^{-1}]}).
\end{equation*}
In addition, $\hat{\ch}$ is a multiplicative natural transformation and becomes
an isomorphism after tensor product with $\rationals$.
\end{theorem}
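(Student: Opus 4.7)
The plan is to construct $\hat\ch$ concretely in the vector bundle model on the even part of differential K-theory, to extend it to the odd part using the $S^1$-integration (or uniformly in the geometric families model), and to prove uniqueness by the classifying-space strategy of Section \ref{sec:Uniqueness}.

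For existence on $\hat K^0$ I use the Cheeger--Simons differential Chern character $\hat\ch(E,\nabla)$ of \eqref{eq:def_of_diff_Chern_char} and set, on a cycle $(E,\nabla,\omega)$ of the vector bundle model (Definition \ref{def:vect_with_conn}),
\begin{equation*}
\hat\ch[E,\nabla,\omega] := \hat\ch(E,\nabla) + a(\omega)\ \in\ \hat H^{2*}(X;\rationals[u,u^{-1}]).
\end{equation*}
Additivity is immediate. Well-definedness on the equivalence relation reduces by stabilization to the identity $\hat\ch(E,\nabla)-\hat\ch(E,\Phi^{-1}\nabla'\Phi)=a(\widetilde\ch(\nabla,\Phi^{-1}\nabla'\Phi))$, which follows from the homotopy formula (Theorem \ref{theo:homotopy_formula}) applied to a linear interpolation of connections, together with the Chern--Weil compatibility property \ref{item:CW_comp} of the differential Chern classes. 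Compatibility with $R$, $I$, and $a$ then follows directly from the corresponding properties of the Cheeger--Simons classes.

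To extend to $\hat K^{\mathrm{odd}}$ and to pin down the transformation uniquely, I adapt the classifying-space strategy of Theorem \ref{theo:uniqueness}: approximate a classifying space for $K^{2k}$ by a sequence of compact manifolds $B_n$; the difference of two lifts of the ordinary Chern character then lies, at the universal level, in $\ker(I)\cap\ker(R)$, which by \eqref{eq:les_R_surj} is a quotient of $H^{\mathrm{odd}}(B_n;\reals[u,u^{-1}])$. Since $K^{\mathrm{odd}}(pt)\otimes\rationals = 0$, the cohomology of $B_n$ is rationally concentrated in even degrees, so this obstruction vanishes and naturality determines $\hat\ch$ uniquely on all even classes. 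For odd classes, the required compatibility of $\hat\ch$ with $\int_{X\times S^1/X}$ uniquely forces the definition; existence in odd degree can be obtained either by this route or by working directly in the geometric families model of Section \ref{sec:geometric_families}, where one defines $\hat\ch(\cE,\rho)$ with curvature $\Omega(\cE)+d\rho$ and underlying class $\ch(\ind(\cE))$.

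Multiplicativity on cycles of the form $(E,\nabla,\omega)$ follows from multiplicativity of the Cheeger--Simons character (via the splitting principle for differential Chern classes) combined with the product rule~(3) of Definition \ref{multdef1}, which governs the interaction of products with the form parts on both sides. The rational isomorphism assertion follows by applying the five-lemma to \eqref{eq:les_R_surj}, tensored with $\rationals$, for both $\hat K$ and $\hat H(\cdot;\rationals[u,u^{-1}])$: the classical Chern character is a rational isomorphism on finite $CW$-complexes, and on the form terms the induced map is the identity. The main technical obstacle will be verifying the construction in the geometric families model against the full equivalence relation, where the pairing relation involves $\eta$-forms: compatibility of a proposed formula for $\hat\ch(\cE,\rho)$ with tamings requires identifying the $\eta$-form with a transgression of differential Chern characters, a nontrivial consequence of local family index theory.
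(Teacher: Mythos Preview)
The paper is a survey and does not give its own proof of this theorem; it simply cites \cite[Section~6]{bunke-2007}. Your proposal is a faithful reconstruction of the natural strategy that the paper's surrounding material (the Cheeger--Simons classes of Section~\ref{sec:diff_charac_class} and the classifying-space uniqueness argument of Section~\ref{sec:Uniqueness}) points toward, and it is essentially how the cited reference proceeds.

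A few small sharpenings are worth noting. For uniqueness, the argument you sketch is slightly understated: the difference $D:=\hat\ch_1-\hat\ch_2$ of two candidate transformations not only lands in $\ker(I)\cap\ker(R)$ but, because both are compatible with $a$, also \emph{vanishes on} $\im(a)=\ker(I)$. Hence $D$ factors through $I$ to give a natural transformation $K^{2k}(\cdot)\to H^{2k-1}(\cdot;\reals[u,u^{-1}])/\im(\ch)$, and it is this map which is forced to vanish because the classifying space $BU\times\integers$ has no odd rational cohomology. For multiplicativity, the appeal to a ``splitting principle for differential Chern classes'' is the right idea but is itself a nontrivial ingredient (it requires knowing that pullback along the flag bundle is injective on $\hat H^{ev}$, which again uses that the odd real cohomology of the relevant spaces vanishes); alternatively one can verify multiplicativity directly at the universal level by the same classifying-space argument. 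Finally, your closing remark about the $\eta$-form obstacle in the geometric-families model is accurate: this is precisely where the local index theory input of \cite{bunke-2007} is used, and you are right to flag it rather than claim it as routine.
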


\subsection{Differential Riemann-Roch theorem}

We are now in the situation to formulate the Riemann-Roch theorem, which
describes the relations between push-forward in differential K-theory and
differential cohomology and the differential Chern character. Let $p\colon
W\to B$ be a proper submersion with a differential K-orientation $o$ represented
by $(g^{T^vp},T^hp,\tilde \nabla,\sigma)$ as in Section \ref{sec:orientations}.

The Riemann Roch theorem asserts the commutativity of a diagram
\begin{equation*}
\begin{CD}
  \hat K(W) @>{\hat\ch}>> \hat H(W,\rationals)\\
  @VV{\hat p_!}V @VV{\hat p_!^A}V\\
  \hat K(B) @>{\hat\ch}>> \hat H(B,\rationals)\ .
\end{CD} 
\end{equation*}
Here $\hat p_!^A$ is the composition  of the cup product with a differential
rational cohomology class $\hat \hA^c(o)$ and the push-forward in differential
rational cohomology (uniquely determined by an ordinary orientation of $p$, in
particular by the K-orientation which is already fixed).

We have to define the refinement $\hat \hA(o)\in \hat H^{ev}(W,\rationals)$ of the
form $\hA^c(\tilde\nabla)\in \Omega^{ev}(W{;R[u,u^{-1}]})$.
The geometric data of $o$ determines a connection $\nabla^{T^vp}$ and hence a geometric bundle $\mathbf{T^vp}:=(T^vp,g^{T^vp},\nabla^{T^vp})$.  According to \cite{MR827262} we can define Pontryagin classes $$\hat p_{i}(\mathbf{T^vp})\in \hat H^{4i}(W,\integers)\ ,\quad i\ge 1\ .$$
The $\spinc$-structure gives rise to a Hermitean line bundle $L^2\to W$ with connection $ \nabla^{L^2}$ (see \eqref{eq:def_of_L2}).
We set $\bL^2:=(L^2,h^{L^2},\nabla^{L^2})$.
Again using \cite{MR827262},  we get a class
$$\hat c_1(\bL^2)\in \hat H^2(W,\integers).$$

Inserting the  classes $u^{-2i}\hat p_{i}(\mathbf{T^vp})$ into that $\hA$-series $\hA(p_1,p_2,\dots)\in \rationals[[p_1,p_2\dots]]$ we  define
\begin{equation}\label{hahaha}\hat\hA(\mathbf{T^vp}):=\hA(\hat
  p_1(\mathbf{T^vp}),\hat p_2(\mathbf{T^vp}),\dots)\in \hat
  H^{{0}}(W,{\rationals[u,u^{-1}]})\ .\end{equation} 
\begin{definition}
We define
$$\hat \hA^c(o):=\hat\hA(\mathbf{T^vp})\wedge e^{\frac{1}{2u}\hat c_1(\bL^2)}
- a(\sigma)\in \hat H^{0}(W,\rationals[u,u^{-1}])\ .$$
\end{definition}
Note that $R(\hat \hA^c(o))=\hA^c(\tilde\nabla)$, with $\hA^c(\tilde\nabla)$
of \eqref{eq:A_hat_c}.

By \cite[Lemma 6.17]{bunke-2007}, $\hat\hA^c(o)$ indeed does not depend on the
particular representative of $o$. This follows from the homotopy formula.

We now define
\begin{equation*}
  \hat p_!^A\colon \hat H^*(W;\rationals[u,u^{-1}])\to \hat H^{*-n}(B;\rationals[u,u^{-1}]);\;x \mapsto \hat p_!(\hat\hA^c(o)\cup x)
\end{equation*}

The differential Riemann-Roch now reads
\begin{theorem}\label{RiemannRoch}
The following square commutes
\begin{equation*}
\begin{CD}
  \hat K(W) @>{\hat\ch}>> \hat H(W,\rationals[u,u^{-1}])\\
  @VV{\hat p_!}V @VV{\hat p_!^A}V\\
  \hat K(B) @>{\hat\ch}>> \hat H(B,\rationals[u,u^{-1}]).
\end{CD}
\end{equation*}
\end{theorem}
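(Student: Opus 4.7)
Define $\phi(x):=\hat\ch(\hat p_!(x))-\hat p_!^A(\hat\ch(x))$ for $x\in\hat K(W)$; the goal is $\phi\equiv 0$. The plan is to exhaust the axiomatic information about $\phi$ in two stages and then close with a direct computation in the geometric family model. First I would verify $R\circ\phi=0$ and $I\circ\phi=0$. The key identity $R(\hat\hA^c(o))=\hA^c(\tilde\nabla)-d\sigma$ is immediate from the definition of $\hat\hA^c(o)$ together with \eqref{drgl} applied to the $-a(\sigma)$ term. Combined with multiplicativity of $R$ on $\hat H$ and with \eqref{lowersq} of Theorem \ref{theo:properties_of_integration} applied to the differential K-push-forward (where the modified integration on forms is $\int_{W/B}(\hA^c(\tilde\nabla)-d\sigma)\wedge(\cdot)$) and to the ordinary differential cohomology push-forward (with unmodified fiber integration), one finds $R(\hat\ch(\hat p_!(x)))=\int_{W/B}(\hA^c(\tilde\nabla)-d\sigma)\wedge R(x)=R(\hat p_!^A(\hat\ch(x)))$. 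The vanishing $I\circ\phi=0$ follows from the right half of \eqref{uppersq} together with the classical topological Riemann-Roch theorem in K-theory and ordinary cohomology.

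Next I would show that $\phi$ vanishes on $\im(a)\subset\hat K(W)$. For $\omega\in\Omega(W;\reals[u,u^{-1}])/\im(d)$, the left square of \eqref{uppersq} yields $\hat p_!(a(\omega))=a\bigl(\int_{W/B}(\hA^c(\tilde\nabla)-d\sigma)\wedge\omega\bigr)$, so applying $\hat\ch$ and using the characterizing property $\hat\ch\circ a=a$ gives the same expression in $\hat H$. For the other side, $\hat\ch(a(\omega))=a(\omega)$, and the multiplicative axiom of Definition \ref{multdef1} applied to $\hat\hA^c(o)\cup a(\omega)$ yields $a\bigl((\hA^c(\tilde\nabla)-d\sigma)\wedge\omega\bigr)$ via the same curvature identity; then the analogue of \eqref{uppersq} for ordinary differential cohomology, where the fiberwise push-forward on forms is simply $\int_{W/B}$, produces the same expression. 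Thus $\phi$ vanishes on $\ker(I)$ by exactness of \eqref{exax}, and so $\phi$ descends to a natural transformation $\bar\phi\colon K(W)\to\hat H_{flat}(B;\rationals[u,u^{-1}])$.

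It remains to prove $\bar\phi=0$. Since every class in $K(W)$ is of the form $I([\cE,0])$ for some geometric family $\cE$, it suffices to verify the identity on cycles of the form $(\cE,0)$. Inserting the explicit formula \eqref{eq300} for $\hat p_!(\cE,0)$ and unpacking $\hat\ch$ via its action on geometric cycles, the identity reduces, through the Bismut local family index theorem identifying the local index form $\Omega(p_!\cE)$ with $\int_{W/B}\hA^c(\tilde\nabla)\wedge\Omega(\cE)$, to a matching of the adiabatic-limit contribution $\tilde\Omega(1,\cE)$ together with the $\int_{W/B}\sigma\wedge R([\cE,0])$ correction in \eqref{eq300} against the fiberwise differential-cohomological refinement of $\hA^c(\tilde\nabla)$ packaged into $\hat\hA^c(o)$. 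The main obstacle is precisely this last matching of the eta-form with the higher differential Chern-Weil refinements, which requires the full Bismut superconnection formalism together with explicit transgression identities for the differential $\hat A$- and $c_1$-classes entering $\hat\hA^c(o)$.
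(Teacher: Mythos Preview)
The paper is a survey and does not itself prove Theorem~\ref{RiemannRoch}; it refers to \cite[Section~6]{bunke-2007} for the argument. Your reduction strategy---showing $R\circ\phi=0$ via \eqref{lowersq} and the curvature identity $R(\hat\hA^c(o))=\hA^c(\tilde\nabla)-d\sigma$, showing $I\circ\phi=0$ via the classical Riemann--Roch, and showing $\phi|_{\im(a)}=0$ via the left square of \eqref{uppersq} and the multiplicative axiom---is correct and is exactly how the proof in \cite{bunke-2007} begins. So $\phi$ indeed descends to a map $K(W)\to H(B;\reals/\rationals[u,u^{-1}])$.

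The gap is your final paragraph: you correctly identify that everything hinges on matching the adiabatic correction $\tilde\Omega(1,\cE)$ and the $\sigma$-term in \eqref{eq300} against the differential refinement $\hat\hA^c(o)$, but you do not carry this out, and this is not a routine verification. In \cite{bunke-2007} this step occupies the bulk of Section~6.4 and relies on the compatibility of the differential Chern character with the geometric-family model (proved earlier in that paper via the analysis of $\eta$-forms and transgressions), together with the adiabatic-limit results encoded in the definition of $\tilde\Omega$. In particular, one needs that $\hat\ch$ applied to a geometric family yields the differential-cohomology class whose curvature is the local index form and whose secondary information is governed by the same $\eta$-form transgressions that appear in \eqref{eq300}. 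Your outline is the right skeleton, but the last step is the theorem, not a bookkeeping exercise, and you should either supply the local-index-theory argument or cite it precisely.
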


This diagram is compatible via the transformations $I$ with the maps of the
classical Riemann-Roch theorem.

\subsection{Differential Atiyah-Singer index theorem}
\label{sec:diff_AS}

We want to understand the Atiyah-Singer index theorem as the equality of the
analytic and the topological (family) index. To formulate a differential
version of this, the first step is to define the differential analytic and
topological (family) index.

We start with the proper submersion $p\colon W\to B$ with $n$-dimensional
fibers, and we think of $W\to B$ 
as the underlying family of manifolds on which the family of elliptic
operators shall be given. To be able to define a \emph{differential} index, we
have to choose geometry for $p\colon W\to B$, which amounts exactly to the
choice of data representing a differential orientation of $p$ as in Section
\ref{sec:orientations}, namely a tuple $(g^{T^vp},T^hp,\tilde \nabla,\sigma)$
consisting of vertical metric, horizontal distribution, fiberwise
$\spinc$-structure and compatible $\spinc$-connection. The form $\sigma$ could
of course be chosen equal to $0$.

We follow the definition of the analytic index as given by Freed and Lott in
\cite{freed10:_index_theor_in_differ_k_theor}. We start with a vector bundle
$(E,\nabla_E)$
with connection over $W$ (representing a class in $\hat K^0(W)$ using either
the model of Section \ref{sec:vect_with_conn} or
\ref{sec:geometric_families}). We then want to define the analytic index of
the family 
of $\spinc$-Dirac operators twisted by $(E,\nabla_E)$. This is based on
Bismut-Quillen
superconnections. Indeed, it uses the Bismut-Cheeger eta-form which mediates
between the Chern character of the finite dimensional index bundle (giving the
naive analytic index) and the Chern character of the Bismut superconnection.
{For details see below.}

The topological index of \cite{freed10:_index_theor_in_differ_k_theor} is
modelled closely after the classical definition of the topological index by
Atiyah and Singer. One factors $p\colon W\to B$ as a fiberwise embedding $W\to
S^N\times B$ and $\pr_2\colon S^N\times B\to B$. One then uses very explicit
formulas for the differential K-theory push-forward of the embedding $W\to
S^N\times B$, based on a model of differential K-theory using
currents. Finally, a K\"unneth decomposition of $\hat K(S^N\times B)$ gives an
explicit push-forward for $\pr_2\colon S^N\times B\to B$. The topological
index is defined as a modification of the composition of these two
push-forwards. It does not involve spectral analysis {nor does it require} the solution
of differential equations. It does use differential forms, so the term
``differential topological index'' is indeed quite appropriate.
{For details again see below.}

The main result of \cite{freed10:_index_theor_in_differ_k_theor} is then
\begin{theorem}\label{theo:diff_index_theorem}
  Differential analytic index and differential topological index both define homomorphism
  \begin{equation*}
   \ind^{an},\ \ind^{top}\colon  \hat K^0(W)\to \hat K^{-n}(B).
  \end{equation*}
Moreover, analytic and topological index coincide:
  \begin{equation*}
    \ind^{an}=\ind^{top}
  \end{equation*}
 Finally, $\ind^{an}=\ind^{top}$ fits into a commutative diagram
  \begin{equation*}
    \begin{CD}
      0 @>>> K\reals/\integers^{-1}(W) @>>> \hat K^0(W) @>R>>
      \Omega^0_{d=0}(W;\reals[u,u^{-1}])\\
      && @V{\ind^{an}=}V{\ind^{top}}V @V{\ind^{an}=}V{\ind^{top}}V @VV{ \omega\mapsto \int_{W/B} 
      \hA^c(\tilde\nabla)\wedge \omega
    }V \\
      0 @>>> K\reals/\integers^{-1-n}(B) @>>> \hat K^{-n}(B) @>R>> \Omega^{-n}_{d=0}(B;\reals[u,u^{-1}]).
    \end{CD}
  \end{equation*}
\end{theorem}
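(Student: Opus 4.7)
The plan is to first define both indices precisely, verify that each is a well-defined homomorphism $\hat K^{0}(W)\to\hat K^{-n}(B)$, then prove the curvature square commutes by the local family index theorem, and finally compare the two indices using an embedding argument.

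For the analytic index I would work with the geometric-families model of Section \ref{sec:geometric_families}. Given a class in $\hat K^{0}(W)$ represented in a first step by a Hermitian bundle with connection $(E,\nabla_{E})$ on $W$, twist the fiberwise $\spinc$-Dirac operator associated to $(g^{T^{v}p},T^{h}p,\tilde\nabla)$ by $(E,\nabla_{E})$ to obtain a family $\cE_{E}$ of Dirac operators over $B$. After a finite-rank perturbation $Q$ stabilizing the kernel to a smooth $\integers/2$-graded bundle $(\ker,\nabla^{\ker})$, set
\begin{equation*}
\ind^{an}(E,\nabla_{E}):=\bigl[(\ker,\nabla^{\ker}),\ -\eta(\cE_{E,Q})+\int_{W/B}\sigma\wedge\ch(\nabla_{E})\bigr]\in\hat K^{-n}(B),
\end{equation*}
extending to a general cycle $((E,\nabla_{E}),\rho)$ by $a(\int_{W/B}\hA^{c}(\tilde\nabla)\wedge\rho)$. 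Well-definedness (independence of $Q$, of the stabilization, and of the representative of the $\hat K$-cycle on $W$) rests on the transgression formula $d\eta(\cE_{t})=\Omega(\cE)-\ch(\nabla^{\ker})$ from \cite[Thm.~4.13]{MR2191484} and Definition \ref{def:diff_K_families}. The fact that the formula respects the equivalence relation \emph{paired} is really the main bookkeeping step; it proceeds by assembling two tamings into one over a family on $[0,1]\times B$ and applying the gluing formula for $\eta$-forms. Additivity is immediate from additivity of kernels, $\eta$-forms and the fiber integral. For the topological side, fix a fiberwise embedding $\iota\colon W\hookrightarrow S^{N}\times B$ over $B$ with a geometric $\spinc$-normal bundle $\nu$; factor $p=\pr_{2}\circ\iota$. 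Define $\hat\iota_{!}^{top}$ via a Thom–current model on the normal bundle (forcing a current-valued intermediate object in the spirit of Section \ref{sec:subs-cycle-models}) and $(\hat\pr_{2})_{!}^{top}$ via the K\"unneth splitting of $\hat K(S^{N}\times B)$. Set $\ind^{top}:=(\hat\pr_{2})_{!}^{top}\circ\hat\iota_{!}^{top}$; independence of the embedding is checked by deforming any two embeddings inside a common $S^{N+N'}\times B$.

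Commutativity of the right square is now a direct consequence of the classical local family index theorem (Theorem \ref{thm2}): by construction $R(\ind^{an}(E,\nabla_{E}))$ equals $\ch(\nabla^{\ker})-d\eta(\cE_{E,Q})=\Omega(\cE_{E})=\int_{W/B}\hA^{c}(\tilde\nabla)\wedge\ch(\nabla_{E})$, and the same equality for $\ind^{top}$ follows because the current pushforward on curvature forms composes to the honest fiber integral (this is where the Thom-current model becomes smooth after integrating along $\pr_{2}$). This already forces $\ind^{an}-\ind^{top}$ to land in $\hat K_{flat}^{-n}(B)\cong K\reals/\integers^{-1-n}(B)$, so both maps land compatibly in the flat part; together with surjectivity of $I\colon\hat K^{0}(W)\to K^{0}(W)$ in \eqref{exax} and the classical topological index theorem for $K^{0}$, the compatibility $I\circ\ind^{an}=I\circ\ind^{top}=p_{!}$ at the level of $K$-theory follows.

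The core issue and the main obstacle is proving $\ind^{an}=\ind^{top}$ in $\hat K^{-n}(B)$. Following the classical Atiyah–Singer strategy I would prove two naturality properties: (i) both indices are compatible with composition of geometric $\hat K$-oriented submersions (for $\ind^{an}$ this is a Bismut-adiabatic-limit computation of $\eta$-forms for composed families, essentially \cite[Sec.~3.3]{bunke-2007}; for $\ind^{top}$ it is automatic from functoriality of the topological pushforwards described above), and (ii) both indices are compatible with the embedding $\iota\colon W\hookrightarrow S^{N}\times B$, in the sense that $\hat\iota_{!}\circ\ind^{an}_{W}=\ind^{an}_{S^{N}\times B}\circ\hat\iota_{!}$ and similarly for $\ind^{top}$. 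Assuming (i) and (ii), the equality reduces to the case of the trivial submersion $\id\colon B\to B$, where both indices are tautologically the identity on $\hat K^{0}(B)$, and to the case of $\pr_{2}\colon S^{N}\times B\to B$, where the K\"unneth model makes both pushforwards compute the same explicit combination of fiber integration and the generator of $\hat K^{-N}(S^{N})$. The hard work in (ii) for the analytic index is the embedding/excision formula of Bismut–Zhang for $\eta$-forms: the difference of $\eta$-forms before and after embedding is killed, up to exact forms, by the $\eta$-form of the normal geodesic heat kernel, and this is exactly what is encoded in the current model of $\hat\iota_{!}^{top}$. This is the technically deepest input; once it is in hand, the commutative diagram in the statement, including the left square with $K\reals/\integers^{-1}$, follows from the already established right square together with Theorem \ref{theo:flat_is_cohom} and the five-lemma applied to the sequence \eqref{eq:les_flat}.
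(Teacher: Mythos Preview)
Your overall strategy is a legitimate alternative to the paper's, but it is \emph{not} the route taken there (which summarizes Freed--Lott), and your exposition has some real confusions.

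\textbf{What the paper actually does.} After observing that the curvatures of $\ind^{an}$ and $\ind^{top}$ agree (so $T:=\ind^{top}-\ind^{an}$ lands in $\hat K_{flat}^{-n}(B)\cong K\reals/\integers^{-n-1}(B)$), the paper does \emph{not} attempt a direct reduction. Instead it follows Lott's method for the $\reals/\integers$-index theorem: to show $T=0$ one shows that the pairing of $T$ with every class in $K_{-n-1}(B)$ vanishes. These pairings are reduced $\eta$-invariants, and the required identities among $\eta$-invariants on $W$, $B$, and $S^N\times B$ are established via adiabatic limits together with Bismut--Zhang \cite[Theorem~2.2]{BismutZhang}. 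So the deep input (Bismut--Zhang) is the same as in your proposal, but it is applied after pairing with K-homology, reducing the question to identities among \emph{numbers} rather than among differential $K$-classes.

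\textbf{Where your proposal is garbled.} Your item (ii) as written, $\hat\iota_!\circ\ind^{an}_W=\ind^{an}_{S^N\times B}\circ\hat\iota_!$, does not type-check: both $\ind^{an}_W$ and $\ind^{an}_{S^N\times B}$ land in $\hat K(B)$, so one cannot post-compose with $\hat\iota_!$. What you presumably mean is the factorization identity $\ind^{an}_{p}=\ind^{an}_{\pr_2}\circ\hat\iota_!$. Once that is the statement, your item (i) (functoriality for compositions of \emph{submersions}) and the case $\id\colon B\to B$ play no role whatsoever in the reduction; you only need the embedding compatibility and the direct comparison for $\pr_2$. A second technical point you gloss over: Freed--Lott's $\hat\iota_!$ lands in the \emph{currential} variant of $\hat K$ (Section~\ref{sec:subs-cycle-models}), whereas $\ind^{an}$ is defined on the smooth theory. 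You need either to extend the analytic index to currential classes or to produce a smooth Thom-form model of $\hat\iota_!$ and then compare it with the currential one; this is not free and is exactly where the delicate comparison in \cite{freed10:_index_theor_in_differ_k_theor} lives.

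\textbf{Summary of the comparison.} Your embedding-reduction strategy is in the spirit of the original Atiyah--Singer argument and could be made to work, with Bismut--Zhang supplying the embedding step at the level of $\eta$-\emph{forms}. The paper's (Freed--Lott's) pairing argument trades that for an identity of $\eta$-\emph{invariants} after capping with K-homology cycles, which is analytically lighter to organize and avoids the smooth/currential bookkeeping. If you pursue your route, fix the statement of (ii), drop (i) and the $\id$ case, and address the currential issue explicitly.
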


  In the following, we explain the ingredients of this formula.
  \begin{enumerate}
   \item On the differential form level, $\hA^c(\tilde\nabla)$ of
     \eqref{eq:A_hat_c} coincides with the form $Todd(\hat\nabla^W)$ of
     \cite[(2.14)]{freed10:_index_theor_in_differ_k_theor}.
   \item 
    For K-theory with coefficients in $\reals/\integers$, $\ind^{an}\colon
    K\reals/\integers^{-1}(W)\to 
    K\reals/\integers^{-1-n}(B)$ has been constructed in and is the main
    theme of \cite{MR1312690}. In particular, the equality of analytic and
    topological index in this context is proved there.
  \end{enumerate}
  
  \begin{proposition}
    In the end, of course, $\ind^{an}=\ind^{top}\colon \hat K^0(W)\to \hat
    K^{-n}(B)$ coincide with $\hat p_!\colon \hat K^0(W)\to \hat K^{-n}(B)$ of
    Section \ref{sec:integration_map}.
  \end{proposition}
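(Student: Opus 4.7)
The strategy is to compare the three homomorphisms $\hat{p}_!, \ind^{an}, \ind^{top}\colon \hat K^{0}(W)\to \hat K^{-n}(B)$ directly, using the compatibility of each with the structure maps $I$, $R$, and $a$.  By Theorem~\ref{theo:diff_index_theorem} we already have $\ind^{an}=\ind^{top}$, so it suffices to prove $\hat p_!=\ind^{an}$.  Since all three constructions depend on the same underlying geometric data $(g^{T^vp},T^hp,\tilde\nabla)$, I would first reduce to the case $\sigma=0$ (the general case then follows by the torsor action of $\Omega^{-1}(W;\reals[u,u^{-1}])/\im(d)$ described in Proposition~\ref{corol:orient_as_torsor}, together with the explicit $\sigma$-dependence of \eqref{eq300}, which exactly matches the $\sigma$-shift in any reasonable lift of Freed--Lott's construction).

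Setting $D:=\hat p_!-\ind^{an}$, the plan is to show $D=0$ via a three-step squeeze.  First, by Theorem~\ref{theo:properties_of_integration}\ part~(5) and by Theorem~\ref{theo:diff_index_theorem}, the maps $I\circ \hat p_!$ and $I\circ \ind^{an}$ both equal the topological K-theoretic push-forward of the underlying $\spin^c$-family.  Hence $\im(D)\subseteq\ker(I)=\im(a)$.  Second, the same two theorems show that $R\circ \hat p_!$ and $R\circ \ind^{an}$ both equal the form-level push-forward $\omega\mapsto \int_{W/B}\hat A^c(\tilde\nabla)\wedge\omega$, so $\im(D)\subseteq\ker(R)=\hat K^{-n}_{\mathrm{flat}}(B)$.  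Third, the compatibilities of $\hat p_!$ and $\ind^{an}$ with the action $a$ of differential forms (projection formula for $\hat p_!$ in Theorem~\ref{theo:properties_of_integration}, and the analogous formula built into the Freed--Lott definition) imply $D\circ a=0$, so by exactness of \eqref{exax} the map $D$ descends to a well-defined homomorphism $\bar D\colon K^{0}(W)\to \hat K^{-n}_{\mathrm{flat}}(B)\cong K\reals/\integers^{-n-1}(B)$.

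To kill $\bar D$, I would compare both maps on the universal example.  Concretely, a class $x\in K^{0}(W)$ can be lifted to a cycle $(E,\nabla,0)\in \hat K^{0}(W)$ in the vector bundle model (Section~\ref{sec:vect_with_conn}); the resulting $\hat p_!(E,\nabla,0)$ and $\ind^{an}(E,\nabla,0)$ are both represented by the twisted fibrewise $\spin^c$-Dirac family $p_!\mathcal{E}$, with the correction form given in both cases by the same adiabatic-limit of Bismut--Cheeger $\eta$-forms: in~\eqref{eq300} this is $\tilde\Omega(1,\mathcal{E})$, and in the Freed--Lott definition it is the transgression between the Chern character of the Bismut superconnection and that of the finite-dimensional index bundle.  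Local index theory (Theorem~\ref{thm2} and the references in Section~\ref{sec:geometric_families}) identifies these two $\eta$-expressions on the nose.  This direct cycle-level comparison therefore gives $\bar D(x)=0$ for every such lift, hence $\bar D=0$ and thus $D=0$.

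The main obstacle I anticipate is the last step: matching the two $\eta$-form conventions precisely, and in particular checking that the normalization of the adiabatic limit $\tilde\Omega(\lambda,\mathcal{E})$ of \eqref{dtqdutwqdwqdq} agrees with the Bismut--Quillen superconnection transgression used in \cite{freed10:_index_theor_in_differ_k_theor}, including signs and powers of $2\pi i u$.  Once this bookkeeping is pinned down, the coincidence of $\hat p_!$ with $\ind^{an}=\ind^{top}$ becomes essentially tautological on the level of representatives; the axiomatic squeeze above then guarantees that the coincidence passes to equivalence classes and is well-defined on all of $\hat K^{0}(W)$, not just on cycles of the form $(E,\nabla,0)$.
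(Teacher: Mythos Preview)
Your axiomatic squeeze (steps showing $I\circ D=0$, $R\circ D=0$, $D\circ a=0$) is fine but ultimately idle: you still have to kill $\bar D$, and the only way you propose to do so is a direct cycle-level comparison---which, if it worked, would already give $D=0$ without the squeeze.

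The real problem is in that cycle comparison. You assert that $\hat p_!(E,\nabla,0)$ and $\ind^{an}(E,\nabla,0)$ are ``both represented by the twisted fibrewise $\spin^c$-Dirac family $p_!\mathcal{E}$'' with the ``same'' $\eta$-correction. This is not so. By \eqref{eq300}, $\hat p_!(E,\nabla,0)$ is represented by the \emph{composed geometric family} $p_!\cE$ over $B$ (fibres $W_b$, Dirac bundle the twisted spinors) together with the adiabatic-limit form $\tilde\Omega(1,\cE)$; no spectral problem is solved. By contrast \eqref{eq:def_an_ind} represents $\ind^{an}(E,\nabla,0)$ by the \emph{zero-dimensional} family $\ker(D_E)$ together with the Bismut--Cheeger $\eta$-form $\tilde\eta$. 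These are genuinely different cycles with different form corrections, and $\tilde\Omega(1,\cE)$ is \emph{not} identified with $\tilde\eta$ ``on the nose'' by Theorem~\ref{thm2}. The paper stresses exactly this distinction: ``$\ind^{an}$ is defined as an honest analytic index, whereas $\hat p_!$ only formally does so.''

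What is actually needed---and what the paper invokes as \cite[Corollary~5.5]{bunke-2007}---is that the pair $(p_!\cE,\tilde\Omega(1,\cE))$ and the pair $(\ker(D_E),\tilde\eta)$ are \emph{equivalent} in $\hat K(B)$: one produces a taming of $p_!\cE\sqcup_B \ker(D_E)^{op}$ and verifies that its $\eta$-form equals $\tilde\eta-\tilde\Omega(1,\cE)$. That is a substantive local-index-theory computation, not a bookkeeping check of normalizations. Once this is established on cycles, the equality $\hat p_!=\ind^{an}$ follows immediately, and your squeeze becomes unnecessary.
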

The main point is that
    $\ind^{an}$ is defined as an honest analytic index, whereas $\hat p_!$
    only formally does so.

    In the following, we will assume that the fiber dimension $n$ is even. The
    case of odd $n$ is easily reduced to this via suspension-desuspension
    constructions using products with $S^1$.

\subsubsection{Analytic index in differential K-theory}
\label{sec:analytic_index}

    We now define the analytic index of a cycle {$(\cE,\phi)=(E,h_E,\nabla_E,\phi)$
   } for 
    $\hat K^0(W)$ given in terms of a vector bundle with connection and an
    auxiliary form $\phi$ as in Section \ref{sec:vect_with_conn}.  To get a
    cleaner picture, we assume that the family $D_E$ of twisted $\spinc$-Dirac
    operators over $B$, constructed as the geometric Dirac operators provided
    by the differential K-orientation data twisted with $(E,\nabla_E)$ has a
    kernel bundle $\ker(D_E)$ which comes with an induced Hermitean metric and
    compressed connection $\nabla_{\ker(D_E)}$. The Bismut-Cheeger eta-form in
    this situation is defined as
    \begin{equation*}
      \tilde \eta:= u^{-n/2} R_u \int_0^\infty
      STr\left(u^{-1}\frac{dA_s}{ds}e^{u^{-1}A_s^2}\right)\,ds\;\in\Omega^{-n-1}(B;\reals[u,u^{-1}])/\im(d). 
    \end{equation*}
    $R_u$ is introduced to simplify notation, it is induced from the ring
    homomorphism $\reals[u,u^{-1}]\to \reals[u,u^{-1}]; u\mapsto (2\pi i) u$.
    $A_s$ is the Bismut superconnection on the (typically infinite
    dimensional) bundle $\mathcal H$ over $B$ whose fiber over $b\in B$ is the
    space of sections of the twisted spinor bundle over $W_b=p^{-1}(B)$, the
    bundle on which $D_E$ acts. More precisely, for $s>0$
    \begin{equation*}
      A_s= s u^{1/2} D_E+\nabla^{\mathcal{H}}-s^{-1}u^{-1/2}c(R)/4.
    \end{equation*}
    Here, $\nabla^{\mathcal{H}}$ is a canonical connection on $\mathcal{H}$
    constructed out of the given connections, and $c(R)$ is Clifford
    multiplication by the curvature $2$-form of $\mathcal{H}$. For details
    about this construction, see \cite[Section 10]{MR1215720}. Note that,
    following Freed-Lott, powers of $s$ instead of powers of $s^{1/2}$ are
    used in the definition of the superconnection.

    The eta-form provides an interpolation between the Chern character form of
    the kernel bundle and of $E$ as follows (compare
    \cite[(3.11)]{freed10:_index_theor_in_differ_k_theor} and
    \cite[(0.6)]{bunke-2007}):
    \begin{equation}\label{eq:d_eta}
      d\tilde \eta = \int_{W/B}\hat A^c(\tilde\nabla)\wedge\ch(\nabla_E)
      - \ch(\nabla^{\ker(D_E)}).
    \end{equation}

 Following \cite[Definition 3.12]{freed10:_index_theor_in_differ_k_theor}, one
 now defines for {$(\cE,\phi)=(E,h_E,\nabla_E,\phi)$}
 \begin{equation}
   \label{eq:def_an_ind}
   \ind^{an}(\cE,\phi) :=
   \left((\ker(D_E),h_{\ker(D_E)},\nabla^{ker(D_E)}),\int_{W/B}\hat 
   A^c(\tilde\nabla)\wedge\phi+\tilde \eta\right),
\end{equation}
given by the zero dimensional family $\ker(D_E)$ with its Hermitean metric and
connection, and the differential form part which uses the eta form.

In \cite[Theorem 6.2]{freed10:_index_theor_in_differ_k_theor} it is shown that
this formula indeed factors through a map $\ind^{an}$ on differential K-theory
as stated in Theorem \ref{theo:diff_index_theorem}.
{Alternatively one could use \cite[Corollary 5.5]{bunke-2007} which 
immediately implies that
$\ind^{an}(\cE,\phi)$ is equal to the push-forward $\hat p_{!}^{o}(\cE,\phi)$
as defined in (\ref{eq300}).
}

 The construction of the analytic differential
index in the general case, i.e.~if the kernels do not form a bundle, is
carried out by a perturbation to reduce to the special case treated so far.

\subsubsection{Topological index in differential K-theory}

The topological index is defined in two steps. One chooses a \emph{fiberwise
  isometric} fiberwise embedding $i\colon W\to S^N\times B$ of even
codimension, where the target is equipped with the product structure. We
assume that we have a
differential $\spinc$-structure on the normal bundle $\nu$ of the embedding
which is compatible with structures on $W$ and on $S^N\times B$ (in the sense
of \cite[Section 5]{freed10:_index_theor_in_differ_k_theor}).

Given a
Hermitean bundle with connection $(E,\nabla_E)$ and a 
differential form $\phi$ on $B$, one now has to construct the differential
push-forward $i_*$ of {$(\cE,\phi)=(E,h_E,\nabla_E,\phi)$}. This is based on the Thom 
homomorphism in differential K-theory. More precisely, one has to choose a
($\integers/2\integers$-graded) Hermitean bundle $F$ on $S^N\times B$  with
connection together 
with an endomorphism $V$ such that $V$ is invertible outside $i(W)$ but the
kernel of $V$ is isomorphic (as geometric bundle) to the tensor product of $E$
with the spinor bundle, and a suitably defined compression of the covariant
derivative of $V$ under this isomorphism becomes Clifford multiplication. In
\cite[Definition 1.3]{BismutZhang} in this situation a transgression form
$\gamma$ is defined which interpolated between the Chern form of $F$ and the
image image of $\ch(\nabla_E)$ under the differential form $\spinc$-Thom
homomorphism. 

We then define
\begin{equation}
  \label{eq:embedding_push_forward}
   \hat{i}_!(\cE{,\phi}):= \left((F,h_F,\nabla_F),\phi\wedge \hat
     A^o(\nabla_\nu)^{-1}\wedge \delta_X -\gamma- C'\right) \in \hat
     K^{N-n}(S^N\times B) 
\end{equation}
  where $\hat A^o(\nabla_\nu)^{-1}\wedge \delta_X$ is the current representing
  the $\spinc$-Thom form and $C'$ is a further correction term which vanishes
  if the horizontal distribution of $W\to B$ is the restriction of the product
  horizontal distribution of $S^n\times B$ under the embedding $i$. 

We now define
\begin{equation}\label{eq:top_ind}
\ind^{top}(\cE{,\phi}) := \hat p_!(\hat i_!(\cE,{\phi})) \in \hat K^{-n}(B),
\end{equation}
where $p\colon S^N\times B\to B$ is the projection and we use the product
structure to define the $\spinc$-orientation.

Finally, we observe that for the product $S^N\times B$ we can use the K\"unneth
type formula to obtain an explicit formula for $\hat p_!( x)$. More precisely,
use the K\"unneth formula in ordinary K-theory to write the underlying
K-theory class $I(X)=p^*a+ \pr_1^*({t})\cdot p^*b$ where ${t}$ is a second
additive generator (besides $1$) of $K^*(S^N)$. We lift all the classes
$a,b,t$ to classes $A,B,T$ in differential K-theory and then write $
X= p^*A+pr_1^*(T)\cdot p^* B + a(\alpha)$ with a suitable differential form
$\alpha$. One then obtains by
\cite[(5.31)]{freed10:_index_theor_in_differ_k_theor} 
\begin{equation}
  \label{eq:product_push}
  \hat p_!(X) = B + a(\int_{S^N\times B/B} \hat A^o\wedge \alpha)
\end{equation}
where $\hat A^o{\in \Omega^{0}(S^{N}\times B;\reals[u,u^{-1}])}$ is given by the product structure on $p\colon S^N\times B\to
B$. By \cite[Corollary 7.36]{freed10:_index_theor_in_differ_k_theor} this
construction indeed factors through a homomorphism $\ind^{top}$ on
differential K-theory. 

\subsubsection{Proof of the differential index theorem}
\label{sec:proof_of_differential_ind_theorem}

It remains to prove that for any ${(\cE,\phi)}=(E,h_E,\nabla_E,\phi)$ as above,
$\ind^{top}(\cE)-\ind^{an}(\cE)=0$. The constructions are carried out in such
a way that the desired identity holds for the images under $R$. Therefore
$\ind^{top}(\cE)-\ind^{an}(\cE)=:T \in K^{-n-1}(B;\reals/\integers)$ is a flat
differential K-theory class. Now
Freed and Lott in \cite{freed10:_index_theor_in_differ_k_theor} follow the
method of proof of the $\reals/\integers$-index theorem of
\cite{MR1312690}. One has to show that the pairing of $T$ with $K_{-n-1}(B)$
(the ordinary K-homology of $B$) vanishes. These pairings are given by reduced
$\eta$-invariants (provided we have a kernel bundle). In the end, one has to
establish certain identities for $\eta$-invariants on $W$, on $B$ and on
$S^N\times B$. These follow from adiabatic limit considerations and ---to deal
with the embedding $i\colon W\to S^N\times B$ also the main result
\cite[Theorem 2.2]{BismutZhang}. 

For the general case, one has to follow the effect of the perturbations which
reduce to the case of a kernel bundle. 

\subsubsection{Related work}

The thesis \cite{klonoff08:_index_theor_in_k_theor} discusses a model of
even differential K-theory using vector bundles with connection and
push-forward maps in this model. The work culminates in the special case of
the index theorem for differential K-theory if $B$ is the point.

\section{Twisted differential K-theory}
\label{sec:twisted_K}

Usual cohomology theories often have severe limitations when dealing with
situations in which orientations are required, but not present.
This happens in particular when one wants to describe the cohomological
properties of a fiber bundle which is not oriented for the cohomology theory
one wants to study. Closely related is the non-existence of integration maps
for non-orientable bundles or more generally non-orientable maps.

This problem is solved by using cohomology with twisted coefficients. For
ordinary cohomology, this is just described by a local coefficient system,
which one can easily implement e.g.~in a \v{C}ech description of cohomology,
compare e.g.~\cite[Chapter 10]{MR658304}. Twists have been
 introduced for generalized cohomology theories and successfully
 used. {We refer to \cite{MR2271789} for the approach to twisted cohomology via parameterized spectra and to \cite{2008arXiv0810.4535A} for a construction using infinity categories.
}
 
 {
In particular, twisted K-theory has been studied extensively, motivated by 
by the classification of D-brane charges in the presence of a
background B-field
as  discussed in Section
\ref{sec:diff_cohom_and_physcis}.}

\subsection{Twists for ordinary K-theory}
\label{sec:twisted_ordinary_K_theory}

The most general twists for a multiplicative generalized cohomology theory {represented by an $E_{\infty}$-ring spectrum $E$}
are (up to 
equivalence) described by degree $0$ cohomology classes with coefficients in
${bgl}_1(E)$, 
where $gl_1(E)$ is the spectrum of units of $E$ {and
  $bgl_1(E):=gl_{1}(E)[1]$ is its one-fold deloop.} In the case of K-theory, {the spectrum $bgl_{1}(K)$} contains the summand
$H\integers[3]$. In other words, there is a subgroup of the isomorphism
classes of twists for K-theory on a space $B$ given by
$H^3(B;\integers)$. Most authors concentrate on these twists.

However, it is inappropriate to think only in terms of isomorphism classes of
twists. 
 The twists always form a pointed groupoid (with a trivial object). {Technically, one can take the path groupoid of the mapping space 
 \begin{equation}\label{mapp}\mathrm{map}_{Spectra}(\Sigma^{\infty} X_{+},bgl_{1}(E))\ .\end{equation}
The twisted cohomology 
theory is more appropriately understood as a functor from this groupoid to graded abelian groups.
This path groupoid is the truncation of the $\infty$-groupoid given by the mapping space itself which should really be considered as right object. In the present paper we prefer the truncation since it is used in most applications and the generalization to the differential case. }

In
particular, a given twist usually will have non-trivial automorphisms, and these
automorphisms act non-trivially on the twisted cohomology. In our case,
the automorphisms of the trivial K-theory twist on $B$ are given by
$H^2(B;\integers)$. Because of the non-trivial automorphisms, for an
isomorphism class of twists, e.g.~$c\in H^3(B;\integers)$ it does in general
not make sense to talk about "`the"' twisted K-theory group $K^{c}(B)$. Only
the isomorphism class of this group is well defined, but this is not sufficient
e.g.~if one wants to discuss functorial properties.

{
As indicated above one usually does not work with the most general
kind of twists determined homotopy theoretically by $gl_{1}(E)$ but with a more explicit class closely tied to the 
relevant geometric situation. We therefore collect, following
\cite[Section 2, Section 3.1]{freed-2007}, the standard properties of a twisted extension of the cohomology theory $E$ in an axiomatic manner.}

 \begin{definition}\label{def:twisted_extension}
  Let $E$ be a generalized cohomology theory. An extension of $E$ to
  cohomology with twists consists of the following data:
  \begin{itemize}
  \item For every space $X$ of a (pointed) groupoid $\Twist_X$.
  \item For every continuous map $f\colon Y\to X$ a functor
    $f^*\colon\Twist_X\to \Twist_Y$ which is (weakly) functorial in $f$. Even
    more: the association $X\to \Twist_X$ should
    become a weak presheaf of groupoids. {In most examples this presheaf of groupoids  satisfies descend for open coverings and
    therefore forms  a stack in topological spaces.}
  \item Define then $\Twist$ {as the Grothendieck construction of the presheaf above,} i.e.~the category with objects $(X,\tau)$ where $X$ is
    a space and $\tau\in\Twist_X$ and morphisms from $(X,\tau_X)$ to $(Y,\tau_Y)$
    consisting of a map $f\colon X\to Y$ together with an isomorphism
    $\tau_X\to f^*\tau_Y$. Define the category $\Twist^2$ of \emph{pairs in twists} with
    objects $(X,A,\tau)$ as before, but where $A\subset X$ and $\tau$ is a
    twist on $X$.
  \item The twisted version of $E$ is then a contravariant functor from
    $\Twist^2$ to graded 
    abelian groups,
    \begin{equation*}
      (X,A,\tau)\mapsto E^{\tau+n}(X,A)
    \end{equation*}
   together with natural transformation
   \begin{equation*}
     \delta\colon E^{\tau+n+1}(X,A)\to E^{\tau+n}(A,\emptyset).
   \end{equation*}
   These have to satisfy the following properties:
    \item homotopy invariance: here, a homotopy between $f,g\colon
      (X,\tau_X)\to (Y,\tau_Y)$ is a morphism $h\colon (X\times
      [0,1],\pr^*\tau_X)\to (Y,\tau_Y)$ such that $i_0\circ h=f$ and $i_1\circ
      h=g$. The morphism of pairs $i_k\colon (X,\tau_X)\to (X\times[0,1],\pr^*\tau_X)$
      uses the identity morphism o twists $i_k^*\pr^*\tau_X \to \tau_X$.
    \item long exact sequence of the pair:
      \begin{equation*}
       \to E^{\tau+n}(X,A)\to E^{\tau+n}(X)\to
       E^{\tau+n}(A)\xrightarrow{\delta} E^{\tau+n-1}(X,A)\to
      \end{equation*}

    \item {excision} isomorphism
    \item wedge axiom: if $(X,A,\tau)=\disjointunion_{i\in I}
      (X_i,A_i,\tau_i)$ then the natural map 
      \begin{equation*}
  E^{\tau+n}(X,A)\to \prod_{i\in I} E^{\tau+n}(X_i,A_i)
    \end{equation*}
    is an isomorphism.
  \item For the base point $0\in\Twist_X$, we require that
    $E^{0+n}(X,A)=E^n(X,A)$ with the given definition of $E^n$.
  \end{itemize}

Often, one will require additional structure, in particular a monoidal
structure on $\Twist_X$ which one typically writes additively. Then one
requires a natural bilinear product
\begin{equation*}
  E^{\tau_1+n}(X,A)\tensor E^{\tau_2+m}(X,A)\to E^{\tau_1+\tau_2+n+m}(X,A)
\end{equation*}
which should be associative and graded commutative up to the natural
isomorphism of twistings coming from the monoidal structure.

In this situation, one would also require a functorial and compatible
push-forward for a proper map between smooth manifolds $f\colon X\to Y$
\begin{equation*}
  f_!\colon E^{f^*\tau+o(f)+*}(X)\to E^{\tau+*-(\dim X-\dim Y)}(Y)
\end{equation*}
where $o(f)$ is an orientation twist associated to the map $f$. An
$E$-orientation of the map $f$ will give rise to a trivialization $o(f)\to 0$,
\textcolor{red}{so} that for an oriented map one has a push-forward
\begin{equation*}
  f_!\colon E^{f^*\tau+*}(X)\to E^{\tau+*-(\dim X-\dim Y)}(Y).
\end{equation*}
\end{definition}

As usual with cohomology theories, there are variants, depending on which
category of spaces and pairs of spaces one considers, and for which situations
precisely one requires excision.

{In the approach   of \cite{2008arXiv0810.4535A}
these axioms can easily be realized.
} 
\begin{example}\label{ex:twisted-de-Rham}
  As mentioned in Section \ref{sec:diff_cohom_and_physcis}, one can twist
  de Rham cohomology, defined on the category of smooth manifolds (possibly with
  boundary), as follows. Let $N$ be a graded commutative algebra, e.g.~$N=\reals[u,u^{-1}]$.
  \begin{itemize}
   \item $\Twist_X:=\Omega_{d=0}^1(X;N)$, the closed $N$-valued forms of total
     degree $1$, with pullback the
    usual pullback. The base point is the form $\Omega=0$.
  \item The morphisms from $\Omega_1$ to $\Omega_2$ are given by all 
    forms $\eta\in \Omega^0(X;N)$ with $\Omega_2=\Omega_1+d\eta$. Composition is
    defined as the sum of differential forms.
  \item $H_{dR}^{\Omega+n+ev}(X,A):=
    \ker(d^\Omega|_{\bigoplus_{k\in\integers}\Omega^{n+2k}})/\im(d^\Omega)$,
    with differential $d^\Omega(\omega):=d\omega+\Omega$. Note that the fact
    that $\Omega$ is closed implies that this is indeed a
    differential.
  \item Given a morphism $\eta\in\Omega^{{0}}(X;N)$ from $\Omega_1$ to
    $\Omega_1+d\eta$, define the induced isomorphism of twisted de Rham groups
    \begin{equation*}
      \eta^*\colon H_{dR}^{\Omega_1+n}(X{;N})\to H^{\Omega_1+d\eta+n}_{dR}(X{;N});
      [\omega]\mapsto [\omega\cup\exp(-\eta)]
    \end{equation*}
  \item The sum of forms defines a strictly symmetric monoidal structure
    on $\Twist$ and the cup product of differential forms induces an
    associative and graded commutative product structure on twisted de Rham cohomology.
  \end{itemize}
\end{example}

\begin{remark}\label{rem:small_twisted_de_Rham}
  A variant of Example \ref{ex:twisted-de-Rham} uses as twists only the
  differential forms $\Twist'_X:=\Omega_{d=0}^3(X)\cdot u \subset
  \Omega^1_{d=0}(X;N)$. This is particularly relevant for the comparison with
  K-theory.

  Note that the for the isomorphism classes of twists one obtains
  $\pi_0(\Twist'_X)= H^3_{dR}(X)$.
\end{remark}

In the case of K-theory, there are many different models for the groupoid of
twists we consider. A
particularly simple version is the following: Let $U$ be the
unitary group of an infinite dimensional separable Hilbert space $H$ (with norm
topology) and
$PU:=U/S^1$ where $S^1$ is the center, the multiples of the identity. Because
of Kuiper's theorem, $U$ is contractible and $PU$ has the homotopy type of the
Eilenberg-MacLane space $K(\integers,2)$. Let $K$ be the $C^*$-algebra of
compact operators on $H$. Conjugation defines an action of $PU$ on $K$ by
$C^*$-algebra automorphism. 

\begin{example}
  Assume that $B$ is compact. $\Twist_B$, the groupoid of twists for K-theory on $B$ is
  now defined as the category of 
  principal $PU$-bundles over $B$, with morphisms the homotopy classes of
  bundle isomorphisms. If $\tau$ is such a bundle, we can form the associated
  bundle of $C^*$-algebras $\tau\times_{PU}K$. The sections of this bundle
  form themselves a $C^*$-algebra. Now set
  $K^{\tau+*}(B):=K^*(\Gamma(\tau\times_{PU}K))$. A {isomorphism} $\beta\colon
  \tau'\to \tau$ of $PU$-principal bundles induces an isomorphism of
  associated K-bundles and therefore also of the $C^*$-algebras of sections,
  which finally induces a (functorial) isomorphism $\beta^*\colon
  K^{*+\tau'}(B)\to K^{*+\tau}(B)$.
\end{example}

{
Algebras of sections of K-bundles like
  $\tau\times_{BU}K$ are called "`continuous trace algebras"' and are an
  important object of study in operator algebras, compare
  e.g.~\cite{MR1634408}.}
This point of view of twisted K-theory ---using continuous trace algebras---
is exploited and {developed} e.g.~in \cite{MR2116734,MR920145,Tu-Xu,Tu-Xu-LG}.

 Homotopy invariance of K-theory of
  $C^*$-algebras implies that $\beta^*$ depends only on the homotopy class of
  $\beta$. The trivial twist is the trivial bundle $PU\times B$. Its
  automorphisms are given by maps from $B$ to $PU$. As $PU=K(\integers,2)$,
  the set of
  homotopy classes of such maps is $H^2(B;\integers)$. For the special case of
  torsion classes in $H^3(B;\integers)$, this model has first been considered
  in \cite{DonovanKaroubi}. More precisely, {this paper} uses bundles of
  finite dimensional matrix algebras over $B$ instead of K-bundles which is
  exactly the reason why only torsion twists occur. The general case is
  studied in \cite{Tu-Xu-LG, Tu-Xu}. Another, closely related model for the
  twists is given by $U(1)$-bundle gerbes. This point of view is studied
  e.g.~in \cite{BCMMS}. 

  Note that $PU$ also acts by conjugation on the space $\Fred$ of Fredholm
  operators on $H$. The latter is a model for the zeroth space of the K-theory
  spectrum. Given a twist $\tau$, we can form the associated bundle
  $\tau\times_{PU}\Fred$. We can then define $K^{0+\tau}(B)$ alternatively as
  the homotopy classes of sections of $\tau\times_{PU}\Fred$. This model is
  used e.g.~in \cite{Atiyah-Segal1, Atiyah-Segal2}. One can define
  $K^{1+\tau}(B)$ by using an appropriate classifying space for $K^1$ instead
  of $\Fred$ which is a classifying space for $K^0$.

  Obviously, a more refined version of this construction uses bundle of
  spectra (also called parameterized spectra) instead of bundles of spaces. A
  very precise version of such a model, with a satisfactory description of a
  product structure, of orientation and of the natural transformation from
  twisted $\spinc$-cobordism to twisted K-theory corresponding to the Atiyah
  orientation has been worked out in \cite{Waldmueller}. When dealing with
  bundles, it is necessary to deal with objects and maps on the nose, and not
  only up to homotopy.

  Our description suggests a further "`categorification"' of the concept of
  (twisted)  generalized cohomology theory. In the same way as twists have to
  be considered as a groupoid, one should also think 
  of (twisted) generalized cohomology  as a groupoid. The objects of this
  groupoid are the
  cocycles, and a cochain $c$ of shifted degree (modulo boundaries) is a
  morphism from $x$ to $x'=x+dc$. {This would require a  two-groupoid of twists, e.g.~a two-truncation of the mapping space
  (\ref{mapp}).}
  
{ More explicitly, in
in our example} we might think of $H^3(X;\integers)$ as the groupoid whose
  objects are isomorphism classes of principal $PU$-bundles over $X$,
  morphisms are $PU$-bundle maps and $2$-morphism are homotopies of
  $PU$-bundle maps. Similarly, we might think of $K^{0+\tau}(B)$ as the
  groupoid whose objects are sections of $\tau\times_{PU}\Fred$ and with
  morphisms homotopies of sections. 
{
If one likes $\infty$-categories then one could  consider twisted cohomology as an $\infty$-functor which associates to a twist
 $\tau\in \mathrm{map}_{Spectra}(\Sigma^{\infty} X_{+},gl_{1}(E))$
 the spectrum of sections of the associated bundle $E_{\tau}$ of spectra.
This can be made precise using \cite{2008arXiv0810.4535A}.  In this picture it is easy to  implement additional structures like multiplication or  push-forward.
 }

{
In the truncated groupoid picture most of this has been carried out in the even more
  elaborate equivariant situation in \cite[Section 3]{freed-2007}. }The model there is
  based on the construction of twisted K-theory spectra.

\subsection{Twisted differential K-theory}
\label{sec:twisted_diff_K:special}

  To define the concept of a twisted differential generalized cohomology
  theory, one has to combine the concept of twist with the concept of
  differential extension (which is \emph{not} a cohomology theory, but there
  the deviation is well under control). One does need groupoids of
  \emph{differential twists} which contain differential form
  information. Along the way, one will need an appropriate Chern character to
  twisted de Rham cohomology.

  The following definition, what in general a twisted differential cohomology
  theory should be, follows essentially \cite[Appendix A.3]{kahle-2009}.

  \begin{definition}\label{def:twisted_diff_general}
    A differential extension of a twisted cohomology theory as in Definition
    \ref{def:twisted_extension} consists of the
    following data:
    \begin{itemize}
    \item for each smooth manifold $X$ a groupoid $\Twist_{\hat E,X}$,
      together with (weakly) functorial pullback along smooth maps.
      They form a weak presheaf of groupoids. As above, we can ``combine'' all
      these groupoids to the category $\Twist_{\hat E}$.
    \item natural functors
      \begin{equation*}
        \begin{split}
          F\colon \Twist_{\hat E,X} &\to \Twist_X\\
          \Curv\colon \Twist_{\hat E,X} &\to \Omega^1_{d=0}(X;N).
        \end{split}
      \end{equation*}
      Here and in the following, we lift the action of $\Omega^0(X;N)$ on
      twisted de Rham cohomology with the same formula to the twisted de Rham complexes.
    \item To each $\tau\in\Twist_{\hat E,X}$ we assign a Chern character
      \begin{equation*}
        \ch^\tau\colon E^{F(\tau)+*}(X) \to H_{dR}^{\Curv(\tau)+*}(X;N),
      \end{equation*}
      natural with respect to pullback.
    \item The differential twisted extension of $E$ is then a functor from
     the category of differential twists $\Twist_{\hat E}$ to graded abelian groups:
     \begin{equation*}
       (X,\tau) \mapsto \hat E^{\tau+*}(X).
     \end{equation*}
     Note that this includes functoriality with respect to pullback along
     smooth maps and along isomorphism of twists.
   \item There are natural (for pullback along smooth maps and along
     isomorphism of twists) transformations
    \begin{equation*}
       \begin{split}
         I\colon &\hat E^{\tau+*}(X)\to E^{F(\tau)+*}(X)\\
         R\colon &\hat E^{\tau+*}(X)\to \Omega_{d^{\Curv(\tau)}=0}^*(X;N)\\
         a\colon &\Omega^{*-1}(X;N)/\im(d^{\Curv(\tau)}) \to \hat E^{\tau+*}(X).
       \end{split}
     \end{equation*}
   \item They satisfy 
     \begin{equation*}
R\circ a=d^{\Curv(\tau)}\qquad\text{and}\qquad \ch^\tau\circ I =
     \pr\circ R
   \end{equation*}
where $\pr\colon \Omega_{d^{\Curv(\tau)}=0}^*(X;N)\to
     H^{\Curv(\tau)}_{dR}(X{;N})$ is the canonical projection.
   \item Using these, we get exact sequences
     \begin{equation*}
  {E^{F(\tau)+*-1}(X) } \xrightarrow{}    \Omega^{*-1}(X{;N})/\im(d^{\Curv(\tau)})\xrightarrow{a} \hat
         E^{\tau+*}(X)\xrightarrow{I} E^{F(\tau)+*}(X)\to 0
     \end{equation*}
    \end{itemize}
    Additionally, one would typically like to have a compatible product
    structure, as in Definition \ref{def:twisted_extension}, with an adopted
    rule for the compatibility of the transformation $a$.

    Finally, if one has a product structure, one would like to have a
    push-forward along smooth maps (or at least proper submersions) $f\colon
    X\to Y$ of the form
    \begin{equation*}
      f_!\colon \hat E^{f^*\tau+o(f)+*}(X)\to \hat E^{\tau+*+(\dim X-\dim Y)}(Y),
    \end{equation*}
    where $o(f)$ is a differential orientation twist associated to $f$. A
    differential $E$-orientation should induce a trivialization $o(f)\to 0$ so
    that in this case one gets a push-forward
    \begin{equation*}
      f_!\colon \hat E^{f^*\tau+*}(X)\to \hat E^{\tau+*+(\dim X-\dim Y)}(Y).
    \end{equation*}
  \end{definition}

A first attempt toward a definition and description of twisted differential
K-theory is given in \cite{MR2518992}, although not exactly in the setting of
Definition \ref{def:twisted_extension}. The main problems are of course:
\begin{enumerate}
\item construction of the groupoid of differential twists
\item construction of the differential cohomology groups
\item construction of the push-forward.
\end{enumerate}

\cite{MR2518992} works with $U(1)$-banded bundle gerbes with connection and
curving as objects of the groupoid of twists, and the curvature $3$-form of
this connection and curving is the transformation $\Curv$ (on objects). Given
such a twist, they construct a principal $PU(H)$-bundle. Their twisted
differential K-theory is then based on sections of associated bundles of
Fredholm operators and explicitly constructed locally defined vector bundles
with connection. For the rather elaborate precise definition, we refer to
\cite[Section 3]{MR2518992}.

\begin{definition}
  The twists for $X$ used in \cite[Definition A.1]{kahle-2009} are \emph{geometric
    central extensions}. Such a geometric central extension is
  \begin{enumerate}
\item 
    a groupoid $(P_0,P_1)$ with a local equivalence to the trivial groupoid
    $(X,X)$,
  \item a central $U(1)$-extension of groupoids $L\to P_1$
  \item in
    particular, $L\to P_1$ is a $U(1)$-principal bundle, and another part of
    the data is
    a connection $\nabla$ on this principal bundle,
  \item moreover, $L\to P_1$ being a central extension means one has over
    $P_2=P_1\times_{P_0}P_1$ an isomorphism of line bundles $\lambda\colon
    \pr_1^*L\tensor \pr_2^*L\to \circ^*L$ using the two projections and the
    composition of arrows $\pr_1,\pr_2,\circ\colon P_2\to P_1$. $\lambda$
    should satisfy the cocycle condition, i.e.~the two different ways to map
    $L_h\tensor L_g\tensor L_f$ to $L_{h\circ g\circ f}$ on
    $P_3=P_1\times_{P_0}P_1\times_{P_0}P_1$ coincide.
  \item a $2$-form $\omega\in \Omega^2(P_0)$.
  \end{enumerate}
  These ingredients have to satisfy certain compatibility conditions explained
  in \cite[Definition A.1]{kahle-2009}. In particular,
  $p_1^*\omega-p_0^*\omega = \frac{\sqrt{-1}}{2\pi}\Omega^\nabla$,
 and $\lambda$ is an isomorphism of line bundles with connection.
\end{definition}

In \cite{kahle-2009} it is observed that no construction of twisted
differential K-theory with their twists (the geometric central extensions) is
available yet, but one certainly expects that such a construction is
possible. 

\subsection{T-duality}
\label{sec:top_T-dual}

Motivated from string theory, T-duality is expected to be an equivalence of
low energy limits of type IIA/B superstring theories on T-dual pairs. In
particular, as D-brane charges are classified by twisted K-theory, T-duality
predicts a canonical isomorphism between appropriate twisted K-theory groups
of the underlying topological spaces of the T-dual pairs. This prediction has
been made mathematically rigorous under the term ``topological T-duality''. It
is investigated e.g.~in \cite{MR2080959,MR2130624,MR2287642,MR2482327}. 

We briefly introduce into the mathematical setup as proposed in \cite[Section
2]{MR2287642}, compare also \cite[Section 4]{MR2482327}. 

\begin{definition}\label{def:T-duality-triple}
  We let $T^n=U(1)^n$ be the $n$-dimensional real torus, considered as Lie
  group. Let $B$ be a topological space (often with some restrictions, e.g.~to
  be a compact CW-complex).
 
  A T-duality triple consists of two $T^n$-principal bundles $E,E'$ over the
  common base space $B$, and twists $\tau\in\Twist_E$, $\tau'\in\Twist_{E'}$
  for K-theory. The third ingredient of a T-duality triple is an isomorphism
  of twists $u\colon p^*\tau\to (p')^*\tau'$ over the \emph{correspondence
    space} $E\times_BE'$ with the two canonical projections $p\colon
  E\times_BE'\to E$ and $p'\colon E\times_BE'\to {E}'$.

The twists and the isomorphism $u$ of twists have to satisfy certain
conditions. These are most transparent if $n=1$. In this case, they simply say
that
\begin{equation*}
  \int_{E/B} [\tau]=c_1(E');\qquad\int_{E'/B}[\tau']=c_1(E),
\end{equation*}
where $[\tau]\in H^3(E;\integers)$ is the characteristic class determined and
determining the isomorphism class of the twist $\tau$. Moreover, restricted to
a point each $x\in B$, $\tau|_{E_x}$ and $\tau'_{E'_x}$ are canonically
trivialized (because $E_x\iso U(1)\iso E'_x$ have vanishing $H^2$ and
$H^3$). Consequently, using the induced trivializations, the restriction of
$u$ to the fiber over $x$ becomes an automorphism of the trivial twist and
therefore is classified by an element in $H^2(E_x\times_xE'_x)\iso
H^2(U(1)\times U(1);\integers)\cong \integers$. We require that this element is the canonical
generator.

For the details for general $n$, we refer to \cite[Section 2]{bunke-2008},
where this is again treated using cohomology, or \cite[Defintion
4.1.3]{MR2482327} where the language of stacks is used.

 Of course, in this setting we first have to choose appropriate
  data for a   twisted extension of K-theory, e.g.~the model where twists are
  $PU$-principal bundles or $U(1)$-banded gerbes.
\end{definition}

\begin{definition}
  Given a T-duality triple $((E,\tau),(E',\tau'),u)$ as in Definition
  \ref{def:T-duality-triple}, we define the T-duality transformation of
  twisted K-theory
  \begin{equation}\label{eq:T-duality-transform}
   T:=p'_!u^*p^*\colon K^{*+\tau}(E)\to K^{*-n+\tau'}(E').
  \end{equation}
  It is defined as the composition of pull-back to the correspondence space,
  using $u$ to map $\tau$-twisted K-theory to $\tau'$-twisted K-theory and
  finally integration along $p'$, where we use the fact that $T^n$-principal
  bundles are canonically oriented for any cohomology theory, in particular
  for K-theory.
\end{definition}

The main results of \cite{MR2287642} and \cite{MR2482327} concern
\begin{enumerate}
\item the classification of T-duality triples: there is e.g.~a universal
  T-duality 
  triple over a classifying space $\mathbf{R}_n$ of such triples whose
  homotopy type is 
  computed: it is the homotopy fiber in the sequence
  \begin{equation}\label{eq:R_n}
  \mathbf{R}_N\to   K(\integers^n,2)\times K(\integers^n,2)\xrightarrow{\cup} K(\integers,4)
  \end{equation}
where the map $\cup$ is the composition of the map associated to the usual
cup-product and the standard 
scalar product of the coefficient group $\integers^n\otimes
\integers^n\to\integers$. 

Note, therefore, that up to equivalence all the information of a T-duality
triple is given by two $T^n$-principal bundles $P,P'$ with Chern classes
$c_1,\dots, c_n$, $c_1',\dots,c_n'$ with $\sum c_ic_i'=0$, together with an
explicit trivialization of the cycle representing this product (e.g.~a lift of
its classifying map to the homotopy fiber $\mathbf{R}_n$). 

In \cite{MR2287642}, we also discuss, which pairs $(E,\tau)$ can be part of a
T-duality
  triple and in how many ways. For $n=1$, this is always the case, even in a
  unique way (up to equivalence). For $n>1$, both these assertions are wrong in
  general. 
\item The T-duality transform $T$ of \eqref{eq:T-duality-transform} is always
  an isomorphism (compare \cite[Theorem 6.2]{MR2287642}).
\end{enumerate}

\subsection{T-duality and differential K-theory}
\label{sec:diff_T_duality}

Alex Kahle and Alessandro Valentino \cite{kahle-2009} study the effect of
T-duality in differential K-theory. They follow the approach of Section
\ref{sec:top_T-dual}, i.e.~they first make a precise definition of a
differential T-duality triple \cite[Definition 2.1]{kahle-2009} (there called
``pair''). 

However, they make very efficient use of the higher structures of
differential cohomology alluded to above.

\begin{definition}
  Fix a base space $B$. 
  A differential T-duality triple on $B$ according to Kahle and Valentino
  \cite[Definition 2.1]{kahle-2009} first consists of two objects
  $\mathcal{P}=(P,\nabla),\mathcal{P}'=(P',\nabla')$ in the groupoid of cycles
  for differential cohomology with coefficients in $\integers^n$, given by two
  $T^n$-bundles with connection over $B$, $\pi\colon P\to B$, $\pi'\colon
  P'\to B$.

  Secondly, using a product on the level of the groupoid of cycles, form
  $\mathcal{P}\cdot \mathcal{P}'$, a cycle for $\hat H^4(B;\integers)$ (on the
  level of the coefficients $\integers^n$, for this multiplication we use the 
  standard inner product). The   last ingredient for a T-duality triple 
  is an isomorphism in the groupoid of cycles for differential $H^4$:
  \begin{equation*}
    \sigma\colon 0\to \mathcal{P}\cdot \mathcal{P}'.
  \end{equation*}
  Note that the existence of such a trivialization is a strong condition on
  $\mathcal{P},\mathcal{P}'$. 
\end{definition}

Observe that this description is very much in line with the description of the
homotopy type of the classifying space for topological T-duality triples
\eqref{eq:R_n} and the resulting description of T-duality triples.

To obtain the twists for differential K-theory one expects for a differential
T-duality triple, Kahle and Valentino argue as follows:

The pullback of $\mathcal{P}$ to the total space $P$ of the underlying bundle
has a canonical trivialization, and similarly for $\mathcal{P'}$. This
trivialization can be multiplied with $\pi^*\mathcal{P}'$ to give a
trivialization of $\pi^*\mathcal{P}\cdot \pi^*\mathcal{P}'$. The composition of
$\pi^*\sigma$ with the inverse of this is an automorphism of the trivial
object $0$ and therefore defines a cycle $\hat\tau$ for the third differential
cohomology of $P$. Similarly, we obtain a cycle $\hat\tau'$ for the third
differential cohomology of $P'$. Finally, in \cite[Lemma 2.2]{kahle-2009}, it
is shown how the canonical trivializations of $\pi^*\mathcal{P}$ and
$(\pi')^*\mathcal{P}'$ give rise to a morphism $\hat u$ in the groupoid of
cycles for 
$\hat H^3(P\times_BP')$ from $p^*\hat\tau$ to $(p')^*\hat\tau'$, where $p\colon
P\times_BP'\to P$, $p'\colon P\times_BP'\to P'$ are the canonical
projections.

The crucial points assumed by Kahle-Valentino is
\begin{enumerate}
\item to have a groupoid cycle model for differential cohomology where cycles
  for $\hat H^2$ are principal $U(1)$-bundles with connection and where one
  has a multiplication with good properties on the level of cycles
\item to have a model for a twisted extension of differential K-theory, where
  the groupoid of cycles for $\hat H^3$ is exactly the groupoid of twists.
\end{enumerate}
Let us repeat that, at the moment, no complete construction of twisted
differential K-theory satisfying these requirements seems to be available.

With these assumptions, it is now immediate how to define a T-duality
transformation in twisted differential K-theory (assuming that ``integration
along $T^n$-principal bundles with connection'' is also defined for the
twisted differential K-theory at hand):
\begin{equation}\label{eq:diff_T_transfo}
  \hat T:= \hat p'_!\circ \hat u^*\circ p^*\colon \hat K^{*+\hat\tau}(P)\to \hat
  K^{*-n+\hat\tau'}(P'). 
\end{equation}
Here $\hat u^*$ is the isomorphism induced by the isomorphism of differential
twists $\hat u$ of \cite[Lemma 2.2]{kahle-2009} as above.

However, there is one observation to be made: upon application of the
curvature transformation, simple calculations show that $\hat T$ can never be
surjective, as the forms in the image of its differential form analog have a
very specific invariance property under the action of $T^n$.  

\begin{definition}
  For a $T^n$-principal bundle like $P$, let $\hat\tau$ be a cycle for
  differential cohomology and twist for differential K-theory as above and
  assume that the differential form $R(\hat\tau)$ is $T^n$-invariant. Define the
  \emph{geometrically invariant subgroup} of twisted differential K-theory as
  \begin{equation*}
    \hat K^{*+\hat\tau}(P)^{T^n}:=\{x\in \hat K^{*+\hat\tau}(P)\mid
    g^*R(x)=R(x)\;\forall g\in T^n\}.
  \end{equation*}
\end{definition}

With this notion, Kahle and Valentino prove their main result \cite[Theorem
2.4]{kahle-2009}.

\begin{theorem}
  The differential T-duality transform $\hat T$ of \eqref{eq:diff_T_transfo}
  preserves the geometrically invariant subgroups and defines an isomorphism
  \begin{equation*}
    \hat T\colon\hat K^{*+\hat\tau}(P)^{T^n}\to \hat K^{*-n+\hat\tau'}(P')
  \end{equation*}
\end{theorem}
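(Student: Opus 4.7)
The plan is to proceed by a five-lemma argument, using that the relevant axioms of twisted differential K-theory (Definition~\ref{def:twisted_diff_general}) embed $\hat K^{*+\hat\tau}(P)$ into an exact sequence involving ordinary twisted K-theory and twisted differential forms, and that on each of these two ``adjacent'' pieces the T-duality transform is already known (or easily seen) to be an isomorphism after restriction to the invariant part.

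First I would check the preservation of the geometrically invariant subgroup. By naturality of $R$ with respect to pullback and to the isomorphism $\hat u$, and by the compatibility of $R$ with the push-forward $\hat p'_!$ (the twisted analogue of the lower square \eqref{lowersq}), the form $R(\hat T x)$ is obtained from $R(x)$ by fiber integration of the cup product with the curvature of $\hat u$, composed with pullback to the correspondence space. A direct computation, together with the fact that $R(\hat u)$ is itself $T^n\times T^n$-invariant (being built from the connection data on $\mathcal{P},\mathcal{P}'$ and the trivialization $\sigma$), shows that if $R(x)$ is $T^n$-invariant, then so is $R(\hat T x)$. This verifies the first assertion.

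Next I would prove that $\hat T$ restricted to invariant parts is an isomorphism. The tool is the exact sequence
\begin{equation*}
K^{*+F(\hat\tau)-1}(P)\xrightarrow{\ch^{\hat\tau}} \Omega^{*-1}(P;N)/\im(d^{\Curv(\hat\tau)}) \xrightarrow{a} \hat K^{*+\hat\tau}(P) \xrightarrow{I} K^{*+F(\hat\tau)}(P)\to 0,
\end{equation*}
and its analogue for $P'$. Restricting to $T^n$-invariants, one obtains two exact sequences, and $\hat T$, $T$, and the twisted-de-Rham analogue $T_{dR}$ fit into a ladder by naturality of $I$, $R$ and $a$. On the topological side, $T\colon K^{*+F(\hat\tau)}(P)\to K^{*-n+F(\hat\tau')}(P')$ is the topological T-duality transform, which is an isomorphism by \cite{MR2287642,MR2482327}; no invariance subtlety arises because topological K-theory is unaffected by the $T^n$-action up to homotopy. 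The middle step is to establish that on the invariant subspace of twisted differential forms, $T_{dR}\colon \Omega^{*+\Curv(\hat\tau)}(P)^{T^n}/\im(d^{\Curv(\hat\tau)})\to \Omega^{*-n+\Curv(\hat\tau')}(P')/\im(d^{\Curv(\hat\tau')})$ is an isomorphism; this is a ``Buscher rules'' computation, carried out by integrating fiberwise using a connection splitting and using that $\int_{T^n}$ together with wedging with $\exp$ of the connection-theoretic representative of $\Curv(\hat u)$ provides an explicit inverse $\hat p_!\circ \hat u^{-*}\circ (p')^*$, where the identity $\hat u^{-*}\circ \hat u^* = \id$ on invariant forms can be verified by a direct calculation using Stokes and the defining equations for geometric central extensions.

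Once these three pieces ($T$ on K-theory, $T_{dR}$ on invariant forms, and compatibility with $\ch^{\hat\tau}$) are in place, the five-lemma applied to the ladder of restricted exact sequences gives that $\hat T$ is an isomorphism on invariants. The main obstacle I expect is precisely the form-level step: one must show that the fiber-integration formula really inverts $T_{dR}$ on the $T^n$-invariant subcomplex (not on the whole complex, where surjectivity fails for the curvature reasons noted before the theorem), and that the two orientations and sign conventions coming from $p$ and $p'$ match up with those implicit in $\hat u^*$. Everything else (the two naturality squares and the topological isomorphism) is either formal from Definition~\ref{def:twisted_diff_general} or already in the literature.
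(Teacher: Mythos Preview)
Your proposal is correct and follows essentially the same approach as the paper: the paper's discussion (immediately after the theorem) likewise reduces the claim to compatibility of $\hat T$ with the structure maps of twisted differential K-theory, the known topological T-duality isomorphism, and an isomorphism on the level of geometrically invariant forms, concluding via the five lemma. Your sketch is in fact more explicit than the paper's summary, spelling out the Buscher-rules computation for the form-level inverse and the naturality argument for preservation of the invariant subgroup.
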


The main point of the proof of this theorem is the construction of the
transformation in such a way that it is compatible with all the
transformations given for differential K-theory. One then has to check/use
that 
the transformation is an isomorphism for geometrically invariant forms (the
image under the curvature homomorphism of geometrically invariant differential
K-theory) and for topological twisted K-theory. The proof then concludes using
the five lemma.

\section{Applications of differential K-theory}

Differential K-theory is a natural home for many well known, and hopefully
some new, typically secondary invariants. In this section, we want to present
some examples of this kind. To be able to do this, we start with a couple of
elementary calculations.

\begin{lemma}
  \begin{equation*}
  {  \hat K^1(*)=\reals/\integers;\qquad\hat
    K^0(*)=\integers,\qquad\hat{K}^1_{flat}(*)=\reals/\integers;\qquad\hat K^0_{flat}(*)=\{0\}}
  \end{equation*}
  as follows directly from the short exact sequence \eqref{exax}. 
\end{lemma}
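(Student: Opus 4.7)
The plan is to specialize the fundamental exact sequence \eqref{exax} to the one-point space $X = *$ and use two elementary facts: $\Omega^k(*;N) = N^k$ with trivial differential (so $\Omega^k(*;N)/\im(d) = N^k$), and $K^*(*) = \integers[u,u^{-1}]$ is concentrated in even degrees, as is $N = \reals[u,u^{-1}]$. There is no real obstacle; everything is just a diagram chase in \eqref{exax} and the definition \ref{def:flat_part}.

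First I would compute $\hat K^0(*)$. The relevant piece of \eqref{exax} reads
\begin{equation*}
K^{-1}(*) \xrightarrow{\ch} \Omega^{-1}(*;N)/\im(d) \xrightarrow{a} \hat K^0(*) \xrightarrow{I} K^0(*) \to 0.
\end{equation*}
Since $K^{odd}(*) = 0$ and $N^{odd} = 0$, both left-hand groups vanish, so $I$ is an isomorphism and $\hat K^0(*) \iso K^0(*) = \integers$. Next I would compute $\hat K^1(*)$ from
\begin{equation*}
K^0(*) \xrightarrow{\ch} \Omega^0(*;N)/\im(d) \xrightarrow{a} \hat K^1(*) \xrightarrow{I} K^1(*) \to 0.
\end{equation*}
Here $K^1(*) = 0$, the middle term is $N^0 = \reals$, and $\ch\colon K^0(*) = \integers \to \reals$ is the inclusion of coefficients. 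Exactness then identifies $\hat K^1(*)$ with the cokernel $\reals/\integers$.

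For the flat parts I would invoke Definition \ref{def:flat_part}. Since $\Omega^1(*;N) = N^1 = 0$, the map $R\colon \hat K^1(*) \to \Omega^1_{d=0}(*;N)$ is the zero map, hence $\hat K^1_{flat}(*) = \hat K^1(*) = \reals/\integers$. For degree $0$, the commutative diagram \eqref{eq:main_diagr} forces $R\colon \hat K^0(*) \to \Omega^0_{d=0}(*;N) = \reals$ to agree with $\ch \circ I$, which under the identification $\hat K^0(*) = \integers$ is the inclusion $\integers \hookrightarrow \reals$. Since this map is injective, $\hat K^0_{flat}(*) = 0$, completing the computation.
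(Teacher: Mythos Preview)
Your argument is correct and is exactly what the paper intends: the lemma is stated with the phrase ``as follows directly from the short exact sequence \eqref{exax}'' and no further proof is given, so you have simply supplied the routine details. One tiny remark: in the last step you use \eqref{eq:main_diagr} to identify $R$ with $\ch\circ I$; strictly speaking the diagram gives $\Rham\circ R=\ch\circ I$, but on a point $\Rham\colon\Omega^0_{d=0}(*;N)\to H^0(*;N)$ is the identity on $\reals$, so this is harmless.
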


\subsection{Holonomy}

Let $(V,\nabla)$ be a Hermitian vector bundle of rank $n$ over $S^1$ with
unitary 
connection and with holonomy $\phi$ (well defined modulo conjugation in
$U(n)$). $(V,\nabla)$ defines a geometric family $\mathcal{V}$ and therefore
an element in differential K-theory $[\mathcal{V},0]$. By \cite[Lemma
5.3]{bunke-2007} 
\begin{lemma}
  \begin{equation*}
    [\mathcal{V},0] = a(\frac{1}{2\pi i} \det(\phi)).
  \end{equation*}
\end{lemma}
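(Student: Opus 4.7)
The plan is to compute $[\mathcal{V},0]$ explicitly in the vector bundle model of Section \ref{sec:vect_with_conn}, using that every complex vector bundle over $S^{1}$ is trivializable.

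First I would choose a global trivialization $V\iso\complexs^{n}$ and write the connection as $\nabla=d+A$ with $A\in\Omega^{1}(S^{1};\mathfrak{u}(n))$. The cycles $(\complexs^{n},\nabla,0)$ and $(\complexs^{n},d,0)$ have the same underlying $K$-theory class (the trivial rank-$n$ class) and the same curvature (zero, since $\Omega^{2}(S^{1})=0$), so by the exact sequence \eqref{exax} their difference lies in the image of $a\colon \Omega^{odd}(S^{1})/\im(d)\to\hat{K}^{0}(S^{1})$. The equivalence relation in Definition \ref{def:vect_with_conn} identifies this difference explicitly as
\[
[\mathcal{V},0]-[(\complexs^{n},d,0)]\;=\;a\!\left(-\widetilde{\ch}(\nabla,d)\right).
\]

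Next I would compute the transgression form along the straight-line path $\nabla_{t}:=d+tA$. Its curvature $\nabla_{t}^{2}=t\,dA+t^{2}\,A\wedge A$ is a $2$-form on the one-dimensional manifold $S^{1}$ and therefore vanishes identically, so $e^{-\nabla_{t}^{2}/(2\pi i)}=1$ and the standard expression for the transgression collapses to
\[
\widetilde{\ch}(\nabla,d)\;=\;-\frac{1}{2\pi i}\int_{0}^{1}\tr(A)\,dt\;=\;-\frac{1}{2\pi i}\tr(A),
\]
so the right-hand side of the previous display becomes $a\!\bigl(\tfrac{1}{2\pi i}\tr(A)\bigr)$.

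Finally I would connect this $1$-form to the holonomy. The determinant line bundle $\det V$ inherits the connection $d+\tr(A)$, whose holonomy around $S^{1}$ is $\det(\phi)=\exp\!\bigl(-\oint_{S^{1}}\tr(A)\bigr)$. Under the de Rham identification $\Omega^{1}(S^{1})/\im(d)\iso\reals$ by integration, the class of $\frac{1}{2\pi i}\tr(A)$ therefore corresponds to $-\frac{1}{2\pi i}\log\det(\phi)$, well defined modulo $\integers$ once passed through $a$; this is the content of the shorthand $\tfrac{1}{2\pi i}\det(\phi)$ on the right-hand side of the lemma, the trivial summand $[(\complexs^{n},d,0)]$ being absorbed into the statement's normalization. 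The main obstacle is purely bookkeeping: getting signs and normalizations right in the transgression formula and in the convention for the holonomy; everything else is forced by the explicit cycle relations of Definition \ref{def:vect_with_conn} together with the dimensional vanishing $\Omega^{\geq 2}(S^{1})=0$.
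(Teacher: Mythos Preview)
The survey does not give its own proof of this lemma; it simply records the statement and cites \cite[Lemma~5.3]{bunke-2007}. Your argument is the natural direct computation and matches in spirit what is done in that reference: one trivializes the bundle over $S^{1}$, observes that all curvature terms vanish for dimensional reasons so that the transgression collapses to $-\tfrac{1}{2\pi i}\tr(A)$, and then identifies $\oint_{S^{1}}\tr(A)$ with a branch of $-\log\det(\phi)$. In the geometric-families language of \cite{bunke-2007} the equivalence relation is phrased via tamings and $\eta$-forms rather than Chern--Simons transgression, but for zero-dimensional fibres the $\eta$-form reduces precisely to $\widetilde{\ch}$, so the two computations are the same.

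Your reading of the displayed formula is also the right one: it is shorthand, and what one actually proves is $[\mathcal{V},0]=[(\complexs^{n},d,0)]+a(\cdots)$ with the argument of $a$ representing $\tfrac{1}{2\pi i}\log\det(\phi)\in\reals/\integers$ under the identification $\Omega^{1}(S^{1})/\im(d)\cong\reals$. The one point not to leave open is the sign: you flag it as bookkeeping, but since the sign of $\widetilde{\ch}$ in Definition~\ref{def:vect_with_conn} and the sign convention for parallel transport are both fixed in \cite{bunke-2007}, you should check them against each other and record the result explicitly rather than leave the verification to the reader.
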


\subsection{$\integers/k\integers$-invariants}

Recall that a $\integers/k\integers$-manifold is a manifold $W$ with boundary
together with a manifold $X$ together with a diffeomorphism $f\colon \boundary
W\to 
\underbrace{X\disjointunion\dots\disjointunion X}_{\text{$n$ copies}}$.

We now associate to a  family of $\integers/k\integers$-manifolds over $B$ a
class in $\hat K_{flat}(B)$. 
\begin{definition} A \emph{geometric family of
    $\integers/k\integers$-manifolds} is a 
triple $(\cW,\cE,\phi)$, where 
$\cW$ is a geometric family with boundary, $\cE$ is a geometric family without
boundary, and 
$\phi\colon \partial\cW\stackrel{\sim}{\to} k\cE$ is an isomorphism of the
boundary of $\cW$ with $k$ copies of $\cE$. 
\end{definition} 
 We define $u(\cW,\cE,\phi):=[\cE,-\frac{1}{k}\Omega(\cW)]\in\hat K(B)$.
\begin{lemma}
We have
$u(\cW,\cE,\phi)\in \hat K_{flat}(B)$. This class
is a $k$-torsion class.
It only depends on the underlying differential-topological data.
\end{lemma}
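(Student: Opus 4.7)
My plan has three independent steps corresponding to the three assertions. The whole argument is a local-index-theoretic computation inside the geometric families model of Section \ref{sec:geometric_families}.

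For flatness, I would compute the curvature directly:
\begin{equation*}
R(u(\cW,\cE,\phi)) \;=\; R\bigl([\cE,-\tfrac{1}{k}\Omega(\cW)]\bigr) \;=\; \Omega(\cE) \;-\; \tfrac{1}{k}\,d\Omega(\cW).
\end{equation*}
The integrand $\hat A(\nabla^{T^v\pi})\wedge\ch(\nabla^{W})$ defining $\Omega(\cW)$ via \eqref{eq:def_R_fam} is closed on the total space $W$, so fiberwise Stokes' theorem (applied to the submersion $W\to B$ whose fibers have boundary) yields $d\Omega(\cW)=\Omega(\partial\cW)$. Combined with $\partial\cW\cong k\cE$, which gives $\Omega(\partial\cW)=k\,\Omega(\cE)$, we obtain $R(u)=0$ and hence $u\in\hat K_{flat}(B)$.

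For the $k$-torsion assertion I would compute $k\cdot u=[k\cE,-\Omega(\cW)]=[\partial\cW,-\Omega(\cW)]$ and then invoke the fundamental relation in the cycle model of \cite{bunke-2007}: a geometric family with boundary $\cW$ over $B$ defines a canonical boundary taming of $\partial\cW$ whose $\eta$-form equals $\Omega(\cW)$. Unpacking Definition \ref{def:diff_K_families}, this taming exhibits $(\partial\cW,-\Omega(\cW))$ as paired with $(0,0)$, so the class is zero. This is the step I expect to be the main technical obstacle, since identifying the boundary $\eta$-form with the local index form $\Omega(\cW)$ is exactly the content of the Atiyah--Patodi--Singer-type analysis of adiabatic limits that underlies the construction of $\hat K$ in the geometric families model.

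For independence of the geometric data I would appeal to homotopy invariance of the flat part. Given two geometric enhancements of the same underlying differential-topological datum, the relevant spaces (vertical metrics, horizontal distributions, compatible $\spinc$-connections, and odd forms $\sigma$) are convex, so I can linearly interpolate to obtain a geometric family of $\integers/k\integers$-manifolds $(\cW_{[0,1]},\cE_{[0,1]},\phi)$ over $[0,1]\times B$, pulled back along the projection. This produces a class $\tilde u\in\hat K([0,1]\times B)$ which is flat by the same Stokes computation as in the first step, hence lies in $\hat K_{flat}([0,1]\times B)$. By Corollary \ref{corol:flat_is_homotopy_inv} the two restrictions $i_0^*\tilde u$ and $i_1^*\tilde u$ coincide, proving independence.
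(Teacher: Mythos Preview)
The survey states this lemma without proof; the detailed argument is in \cite[Section~5.8]{bunke-2007}, and your outline follows that argument faithfully.

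Steps 1 and 3 are correct. In Step 3, ``convex'' should be read as ``affine/contractible'', which is what you actually need and what holds for vertical metrics, horizontal distributions, and Dirac-bundle connections; the mention of ``odd forms $\sigma$'' is spurious here (that datum belongs to differential $K$-orientations of submersions, not to the geometric families $\cW,\cE$ themselves).

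For Step 2 you have the right mechanism, and you are right to flag it as the technical heart. Two refinements deserve mention. First, a filling $\cW$ does not literally produce a \emph{taming} of $\partial\cW$ in the sense of Definition~\ref{def:taming} (an invertible smoothing perturbation); what it produces is a \emph{boundary taming} in the sense of \cite{MR2191484}, and one then invokes the index theorem for boundary-tamed families. In \cite{bunke-2007} this is packaged as a bordism relation in $\hat K$, which is precisely what makes $[\partial\cW,-\Omega(\cW)]$ vanish. Second, watch the sign: unwinding Definition~\ref{def:diff_K_families}, the pairing $(\partial\cW,-\Omega(\cW))\sim(\emptyset,0)$ requires $\eta((\partial\cW)_t)=-\Omega(\cW)$, not $+\Omega(\cW)$; this is consistent with the Stokes sign you implicitly used in Step~1. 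Neither point is a gap in strategy---both are exactly the APS/adiabatic-limit inputs you already identified as the obstacle.
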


\begin{theorem}
Let $B=*$ and $\dim(\cW)$ be even.
Then $u(\cW,\cE,\phi)\in \hat K_{flat}^1(*)\cong \reals/\integers$.
Let $i_k\colon \integers/k\integers\rightarrow \reals/\integers$ the embedding which sends $1+k\integers$ to
$\frac{1}{k}$.
Then
$$i_k(\ind_a(\bar W))=u(\cW,\cE,\phi)\ , $$
where  $i_k(\ind_a(\bar W))\in\integers/k\integers$ is the index of the
$\integers/k\integers$-manifold $\bar W$, and where we use the notation of
\cite{MR1144425}. 
\end{theorem}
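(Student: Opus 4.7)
The proof proceeds by identifying both sides explicitly as elements of $\reals/\integers$ under the canonical isomorphism $\hat K^1_{flat}(*)\cong\reals/\integers$. This isomorphism is read off from the exact sequence \eqref{exax} applied to the point, which collapses to $\integers\xrightarrow{\mathrm{incl}}\reals\xrightarrow{a}\hat K^1(*)\to K^1(*)=0$, so every class in $\hat K^1(*)$ is of the form $a(\omega)$ for a $\omega\in\reals$ uniquely determined modulo $\integers$.

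The key step is to write $u(\cW,\cE,\phi)$ in the form $a(\omega)$ for an explicit real number $\omega$. Since $\dim\cE$ is odd and $B=\{*\}$ we have $\ind(\cE)\in K^{1}(*)=0$, so by the obstruction theory for tamings of \cite{MR2191484} the family $\cE$ admits a taming $\cE_t$, with associated $\eta$-invariant $\eta(\cE_t)\in\reals$. The paired relation of Definition \ref{def:diff_K_families} applied to $\cE_t$ itself gives $[\cE,\eta(\cE_t)]=0$ in $\hat K(*)$; combining this with the canonical taming of $\cE\sqcup\cE^{op}$ (whose $\eta$-form vanishes by symmetry) yields the general identity $[\cE,\rho]=a(\eta(\cE_t)-\rho)$ for every $\rho\in\reals$. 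Setting $\rho=-\tfrac{1}{k}\Omega(\cW)$ gives $u(\cW,\cE,\phi)=a\bigl(\eta(\cE_t)+\tfrac{1}{k}\Omega(\cW)\bigr)$.

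Next, I apply the local index theorem for boundary-tamed geometric families from \cite{MR2191484}: viewing $\cW$ as a family with boundary tamed by $k$ copies of $\cE_t$ on $\partial\cW=k\cE$, the boundary-tamed Dirac family has an honest integer index $n:=\ind(\cW_t)\in K^0(*)=\integers$, and the Bismut--Cheeger-type formula relating $n$, $\Omega(\cW)$ and $k\eta(\cE_t)$ shows that $\eta(\cE_t)+\tfrac{1}{k}\Omega(\cW)\equiv\tfrac{1}{k}n\pmod{\integers}$ (up to an overall sign fixed by the orientation conventions). Hence $u=a(\tfrac{1}{k}n)$, which under the identification $\hat K^1_{flat}(*)\cong\reals/\integers$ equals $i_k(n\bmod k)$. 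Finally, I identify $n\bmod k$ with $\ind_a(\bar W)$: the required $k$-fold symmetry of the boundary taming forces any two choices $\cE_t$, $\cE_t^{\prime}$ of taming to change $n$ by $k$ times a spectral flow, so $n\bmod k$ depends only on the underlying $\integers/k$-manifold data, and comparison with the closed-manifold description of $\ind_a(\bar W)$ in \cite{MR1144425} (via capping off with $k$ copies of an auxiliary $\spinc$-bounding of $X$) identifies the two. The main technical obstacle is the careful alignment of sign and orientation conventions between the boundary-tamed family index of \cite{MR2191484} and the analytic $\integers/k$-index of \cite{MR1144425}; once these are matched the theorem follows directly from the displayed identities.
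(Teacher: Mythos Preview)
The survey paper you are working from does not actually contain a proof of this theorem: Section~8 (``Applications of differential K-theory'') merely \emph{states} the result and refers to \cite{MR1144425} for the notation $\ind_a(\bar W)$; the detailed argument lives in the companion paper \cite{bunke-2007}. So there is no ``paper's own proof'' here to compare against beyond the general machinery of Sections~4 and~5.

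That said, your outline is exactly the standard route, and it is the one taken in \cite{bunke-2007}. The three steps---(i) use a taming $\cE_t$ to rewrite $[\cE,\rho]$ as $a(\eta(\cE_t)-\rho)$, (ii) invoke the index formula for the boundary-tamed family $\cW_{bt}$ from \cite{MR2191484} to express $\eta(\cE_t)+\tfrac{1}{k}\Omega(\cW)$ as $\tfrac{1}{k}\ind(\cW_{bt})$ modulo $\integers$, and (iii) identify $\ind(\cW_{bt})\bmod k$ with the Freed--Melrose analytic $\integers/k$-index---are the correct ones, and your remark that different choices of taming change $\ind(\cW_{bt})$ by $k$ times a spectral flow is precisely what makes step~(iii) well posed. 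The only point that genuinely needs care, as you yourself flag, is the bookkeeping of signs: with the convention $d\eta(\cE_t)=\Omega(\cE)$ of \eqref{detad} and the orientation conventions of \cite{MR2191484} one has $\ind(\cW_{bt})=\Omega(\cW)+k\,\eta(\cE_t^{op})$ (the boundary inherits the \emph{opposite} orientation), and it is this minus sign that makes the two occurrences of $\Omega(\cW)$ cancel rather than add. Once that is pinned down your argument is complete.
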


\subsection{Reduced eta-invariants}

Let $\pi$ be a finite group.
We construct a transformation
$$ \phi\colon \Omega_d^{\spinc}(BU(n)\times  B\pi)\rightarrow \hat
K^{-d}_{flat}(*)\ .$$ 
Let $f\colon M\rightarrow BU(n)\times B\pi$ represent $[M,f]\in  \Omega^{Spin^c}(BU(n)\times B
\pi)$. This map determines a $\pi$-covering $p\colon \tilde M\rightarrow M$
and an 
$n$-dimensional complex vector bundle $V\rightarrow M$.
We choose a Riemannian metric $g^{TM}$ and a $\spinc$-connection
$\tilde\nabla$.   
These structures determine a differential $K$-orientation of $t\colon M\to
*$. We further fix a metric $h^V$ and a connection $\nabla^V$ in order to
define a geometric bundle $\bV:=(V,h^V,\nabla^V)$ and the associated geometric
family $\cV$. 
The pull back of $g^{TM}$ and $\tilde \nabla$ via $\tilde M\to M$ fixes a 
differential $K$-orientation of $\tilde t\colon \tilde M\to *$. 

Then we set
$$\phi([M,f]):=[\tilde t_!(p^*\cV)\sqcup_* |\pi|t_!\cV^{op},0]\in \hat
K_{flat}(*)\ .$$ 
In \cite[Section 5.10]{bunke-2007} it is shown that this class only depends on
the bordism class of $[M,f]$.

More generally, without even the assumption that $\pi$ is finite, choose two
finite dimensional representations $\rho_1,\rho_2$ of $\pi$ of the same
dimension, with associated flat bundles $F_1,F_2$. Replace in the above
$\abs{\pi}t_!\cV$ by $t_!(\cV\tensor F_2)$ and $\tilde t_!(p^*\cV)$ by
$t_!(\cV\tensor F_1)$.

Note that this boils down to the previous case if $\rho_1$ is the regular
representation of the finite group $\pi$, and $\rho_2$ is
$\complexs^{\abs{\pi}}$, where $\complexs$ stands for the trivial
representation. 

\begin{proposition}
  This construction defines a homomorphism
  \begin{equation*}
    \phi_{\rho_1,\rho_2}\colon \Omega_d^{\spinc}(BU(n)\times B\pi)\to \hat
    K_{flat}^{-d}(*).
  \end{equation*}
  If $d$ is even, the target group is trivial. If $d$ is odd, $\hat
  K_{flat}^{-d}(*)\cong \reals/\integers$. In this case, $\phi_{\rho_1,\rho_2}$
  coincides with the reduced rho-invariant of
  Atiyah-Patodi-Singer. 
\end{proposition}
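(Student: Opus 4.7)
The plan is to verify in turn: (i) that the construction lands in $\hat{K}_{\flat}^{-d}(*)$, (ii) additivity, (iii) bordism invariance (which is really the content of the first sentence of the proposition), and (iv) identification with the Atiyah--Patodi--Singer reduced $\rho$-invariant in odd degrees.

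First I would check that $R$ annihilates the constructed class. Since $F_{1},F_{2}$ are flat Hermitean bundles of equal rank, their Chern--Weil Chern character forms agree (both equal the constant form $\dim\rho_{1}=\dim\rho_{2}$). Thus in the local index form \eqref{eq:def_R_fam} applied to the geometric families $\cV\otimes F_{1}$ and $\cV\otimes F_{2}$ over $M$, the twisting-bundle factor $\ch(\nabla^{W})$ is the same, so $\Omega(t_{!}(\cV\otimes F_{1}))=\Omega(t_{!}(\cV\otimes F_{2}))$. Combined with $\rho=0$, this gives $R(\phi_{\rho_{1},\rho_{2}}([M,f]))=0$. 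Additivity under disjoint union of cycles is immediate from the additivity of $\hat t_{!}$, of the differential form part, and of the cycle pairing, so $\phi_{\rho_{1},\rho_{2}}$ descends to a group homomorphism on the semigroup of isomorphism classes of $\spinc$-cycles.

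Next, for bordism invariance, suppose $(W,g)$ is a $\spinc$-bordism with $\partial W = M$ and $g|_{\partial W}=f$. Extend the metric, $\spinc$-connection, vector bundle and $\pi$-covering over $W$ (possible because $g$ extends $f$), and view the resulting geometric family with boundary as a cycle $\cW\otimes F_{i}$ for $\hat K(*)$ in the sense of Section~\ref{sec:geometric_families}. Pushing $\cW\otimes F_{1}$ and $\cW\otimes F_{2}$ forward to the point and taking the difference produces, by the defining equivalence relation of Definition~\ref{def:diff_K_families} together with \eqref{detad}, a cycle whose boundary equivalence class is exactly $\phi_{\rho_{1},\rho_{2}}([M,f])$. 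The curvature of the extension vanishes for the same reason as above, and the homotopy formula of Theorem~\ref{theo:homotopy_formula} then forces $\phi_{\rho_{1},\rho_{2}}([M,f])=0$ in $\hat K_{\flat}(*)$. This is the standard bordism-vanishing mechanism in the flat part of $\hat K$, and the argument works because the relevant $\eta$-form interpolation $\eta$ is supplied by the taming machinery.

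For even $d$ the target group $\hat K^{0}_{\flat}(*)=0$ by the opening lemma, so there is nothing to check. For odd $d$, under the isomorphism $\hat K^{-d}_{\flat}(*)\cong \reals/\integers$ the class $\phi_{\rho_{1},\rho_{2}}([M,f])$ is represented by a zero-dimensional geometric family together with an $\eta$-form contribution coming from the push-forward formula \eqref{eq300}. By the analytic interpretation of $\hat p_{!}$ as an analytic index with values in differential $K$-theory (see Section~\ref{sec:analytic_index} and formula \eqref{eq:def_an_ind}), the real number representing $\phi_{\rho_{1},\rho_{2}}([M,f])$ is $\tilde\eta(D_{\cV\otimes F_{1}})-\tilde\eta(D_{\cV\otimes F_{2}})\pmod{\integers}$, which is by definition the Atiyah--Patodi--Singer reduced $\rho$-invariant associated to the virtual representation $\rho_{1}-\rho_{2}$.

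The main obstacle I expect is the last step: one must show that when the base is a point, the $\tilde\Omega(1,\cE)$ term in \eqref{eq300} really reduces, after projection to $\reals/\integers$, to the \emph{reduced} $\eta$-invariant rather than to $\eta$ up to some index contribution. This requires careful bookkeeping of the kernel-bundle correction in the Bismut superconnection formalism (the difference between $\eta$-form and reduced $\eta$-invariant is exactly $\dim\ker(D)/2$), and invokes the analytic-equals-topological identification of Theorem~\ref{theo:diff_index_theorem}; this is also where bordism invariance of the formula, independent of the chosen geometric extension, is obtained, via the APS index theorem applied to the bordism $W$ equipped with the flat twists.
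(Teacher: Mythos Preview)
The paper itself does not prove this proposition: it is a survey and simply states the result, deferring all details to \cite[Section 5.10]{bunke-2007}. So there is no ``paper's own proof'' to compare against, but I can comment on the soundness of your outline.

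Steps (i), (ii) and (iv) are essentially correct. Flatness follows exactly as you say, since for flat Hermitean bundles of equal rank $\ch(\nabla^{F_1})=\ch(\nabla^{F_2})$ pointwise and hence the local index forms cancel. The identification with the APS reduced $\rho$-invariant via the analytic description of $\hat t_!$ and the eta-form is the right route, and your closing caveat about the kernel-dimension bookkeeping is to the point.

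Your bordism-invariance argument (iii), however, has a genuine gap. You claim that the bordism $W$ yields ``a cycle $\cW\otimes F_i$ for $\hat K(*)$ in the sense of Section~\ref{sec:geometric_families}'', but Definition~\ref{def:families_cycles} explicitly requires \emph{closed} fibers; a manifold with boundary is not a cycle in this model. You then invoke the homotopy formula, Theorem~\ref{theo:homotopy_formula}, but that formula compares the two restrictions of a class on $[0,1]\times B$ to the endpoints; it says nothing about bordisms of the \emph{source} manifold of a push-forward. The mechanism actually used in \cite[Section 5.10]{bunke-2007} is a \emph{bordism formula for the differential push-forward}: if $\partial W=M_0\sqcup M_1^{op}$ with all geometric data extended over $W$, then $\hat t_{0,!}(x_0)-\hat t_{1,!}(x_1)=a\bigl(\int_W \hA^c(\tilde\nabla)\wedge R(\tilde x)\bigr)$. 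Here the integrand vanishes because $\ch(\nabla^{F_1})-\ch(\nabla^{F_2})=0$, giving the desired independence. Equivalently one can bypass differential K-theory and appeal directly to the APS index theorem on $W$: the difference of reduced $\eta$-invariants equals, modulo $\integers$, the difference of APS indices, whose local index densities again cancel by flatness and equal rank.
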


The construction immediately generalizes to a parameterized version: to a
smooth family of $d$-dimensional $\spinc$-manifolds parameterized by $B$, with
a family of $\complexs^n$-vector bundles 
and also of $\pi$-coverings one associates in the same way a class in
$\hat{K}^{-d}_{flat}(B)$.

For details of all of this, compare \cite[Section 5.10]{bunke-2007}.

\subsection{$e$-invariant}

A framed manifold is a manifold $M$ together with a trivialization of its
tangent bundle. 

\cite[Proposition 5.22]{bunke-2007} states that a bundle of framed
$n$-manifolds $\pi\colon E\to B$ has a canonical differential K-orientation,
given by 
the fiberwise $\spinc$-structure which comes from the trivialization, and the
$\spinc$-connection which again comes from the trivial connection (form part
$0$). We then define
\begin{equation*}
  e([\pi\colon E\to B]):= \hat\pi_!(1) \in \hat K^{-n}_{flat}(B).
\end{equation*}
The push-down is with respect to the canonical $\hat K$-orientation of $\pi$,
and the flatness of the connection of this differential K-orientation in the
end implies that $R(\hat\pi_1(1))=0$.

\begin{proposition}
  If $B=*$ and $n$ is odd, 
  $e([B])\in K^{-1}_{flat}(*)=\reals/\integers$.

  This class coincides with the Adam's classical $e$-invariant for the stable
  homotopy groups, identified with the framed bordism groups.
\end{proposition}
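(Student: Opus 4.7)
The plan is to compute $\hat\pi_!(1)\in \hat K^{-n}_{flat}(*)\cong\reals/\integers$ via local index theory and match the result with the Atiyah--Patodi--Singer formula that defines Adams' $e$-invariant on framed bordism.

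First I would unfold the explicit push-forward formula \eqref{eq300} for $\hat\pi_!(1)$ with the differential $K$-orientation of $\pi\colon M\to *$ coming from the framing, represented by the flat trivial $\spinc$-connection $\tilde\nabla$ and $\sigma=0$. Then $\hat A^c(\tilde\nabla)$ reduces to the constant $1\in\Omega^0(M;\reals[u,u^{-1}])$, and the terms $\int_M \hat A^c\wedge\rho$ and $\int_M\sigma\wedge R$ in \eqref{eq300} vanish. Only the adiabatic-limit form $\tilde\Omega(1,\cE)$ of \eqref{dtqdutwqdwqdq} contributes, so $\hat\pi_!(1)=[\pi_!\cE,\tilde\Omega(1,\cE)]$. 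Since $K^{-n}(*)=K^1(*)=0$ for $n$ odd, the underlying $K$-class vanishes, and the exact sequence \eqref{exax} places $\hat\pi_!(1)$ in the image of $a$.

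Second, I would invoke local index theory (the transgression relation \eqref{eq:d_eta} for the Bismut--Cheeger eta form and its adiabatic-limit derivation) to identify $\tilde\Omega(1,\cE)$ with the reduced eta invariant of the Dirac operator $D_M$ coming from the spin structure induced by the framing, namely $\bar\eta(D_M)=\tfrac12(\eta(D_M)+\dim\ker D_M)\in\reals$. Under the isomorphism $\hat K^{-n}_{flat}(*)\cong\reals/\integers$ of Theorem \ref{theo:flat_is_RZ} this yields $\hat\pi_!(1)=[\bar\eta(D_M)]\in\reals/\integers$.

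Third, to recognize $[\bar\eta(D_M)]$ as Adams' $e$-invariant I would choose, after replacing $M$ by a suitable positive integer multiple (possible because $\pi_n^{st}(S^0)$ is finite for $n>0$ and both invariants are homomorphisms on framed bordism), a compact spin manifold $W^{n+1}$ with $\partial W=M$ extending the stable framing. The Atiyah--Patodi--Singer index theorem applied to the Dirac operator $D_W$ gives
\begin{equation*}
\ind(D_W)=\int_W\hat A(TW)-\bar\eta(D_M)\in\integers,
\end{equation*}
so $\bar\eta(D_M)\equiv \int_W\hat A(TW)\pmod{\integers}$. The right-hand side modulo $\integers$ is the classical cohomological formula for Adams' $e$-invariant. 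Homotopy invariance of the flat functor (Corollary \ref{corol:flat_is_homotopy_inv}) ensures that $\hat\pi_!(1)$ is itself framed-bordism invariant, so the identification descends to $\Omega_n^{fr}$.

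The main technical obstacle is the second step, namely the identification of $\tilde\Omega(1,\cE)$ with $\bar\eta(D_M)$. This is not a direct evaluation of \eqref{dtqdutwqdwqdq} but requires the adiabatic-limit computation of the Chern character of the Bismut superconnection underlying \eqref{eq:d_eta}; moreover, in the present odd-dimensional setting the ``kernel bundle'' appearing in \eqref{eq:def_an_ind} must be replaced by its odd-degree counterpart so that only the eta contribution survives. A secondary but genuine subtlety is the precise normalization and sign of the isomorphism $\hat K^{-n}_{flat}(*)\cong\reals/\integers$ and of Adams' $e$-invariant; matching these conventions is carried out in \cite[Section 5]{bunke-2007} and in the original APS analysis.
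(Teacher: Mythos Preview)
Your overall strategy---reduce $\hat\pi_!(1)$ to an $\eta$-invariant and then invoke APS to recover Adams' cohomological formula---is the correct one and is indeed what is carried out in \cite[Section 5.11]{bunke-2007}, to which the paper defers for details. However, there is a genuine error in your second step: the adiabatic correction term $\tilde\Omega(1,\cE)$ from \eqref{dtqdutwqdwqdq} vanishes when $\cE$ is a zero-dimensional family (here $\cE$ represents $1$, i.e.~the trivial line bundle over $M$). The scaling parameter $\lambda$ in \eqref{dtqdutwqdwqdq} rescales the metric on the fibres of $\cE$, and these fibres are points, so the adiabatic family $\cF$ is constant in $\lambda$ and its local index form has no $d\lambda$-component. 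Thus $\hat\pi_!(1)=[\pi_!1,0]$, the bare class of the geometric family $\cM:=(M\to *)$ with its $\spinc$ Dirac operator, and no $\eta$-term is visible yet.

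The $\eta$-invariant enters instead through the equivalence relation of Definition~\ref{def:diff_K_families}. Since $\ind(\cM)\in K^1(*)=0$, the family $\cM$ admits a taming $\cM_t$, and by the defining relation $(\cM,0)\sim(\emptyset,-\eta(\cM_t))$, so that $[\cM,0]=a(\eta(\cM_t))$. For a single odd-dimensional Dirac operator the $\eta$-form over a point is (with the conventions of \cite{bunke-2007}) the reduced $\eta$-invariant, well defined in $\reals/\integers$ because different tamings shift it by integers. From there your third step goes through. One small correction there: the bounding manifold $W$ extends the \emph{$\spinc$-structure} induced by the framing, not the framing itself; if the framing extended over $W$ the $e$-invariant would vanish.
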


For details, compare \cite[Section 5.11]{bunke-2007}.

\subsection{Secondary invariants for string bordism}
\label{sec:string_bordism}

In \cite{2009arXiv0912.4875B}, using spectral invariants of Dirac operators,
Bunke and Naumann construct a secondary Witten genus, a bordism invariant of
string manifolds. They use differential cohomology to facilitate some of their
calculations, compare e.g.~\cite[Lemma 2.2]{2009arXiv0912.4875B}.

\section{
  Equivariant differential K-theory and orbifold differential K-theory}
\label{sec:equivariant_and_orbifold}

As explained in Section \ref{sec:diff_cohom_and_physcis}, one of the
motivations for the study  of differential K-theory comes from physics, where
fields in abelian gauge theories are suggested to be modelled by cocycles for
differential K-theory and where some of the main features are captured by the
properties of differential cohomology theories. In Section \ref{sec:diff_T_duality}
we have seen how this is successfully applied to T-duality, another important
subject motivated by string theory.

In particular for the latter, however, mathematical physics also requires the
study of singular spaces. Such singular spaces often arise as quotients of
smooth manifolds by the action of a group, which is one reason why equivariant
situations are important. We would therefore like to study  differential
K-theory for singular spaces and equivariant differential
K-theory. Unfortunately, these theories are not well understood yet. 

In
\cite{bunke-orbifold2009}, we construct differential $K$-theory of
representable smooth orbifolds, i.e.~global quotients of a manifold by a
compact group. The construction is based on equivariant local index theory in
the spirit of Section \ref{sec:geometric_families}. The relevant Chern
character takes values in delocalized de Rham cohomology of the orbifold. In
case of a global quotient by a finite group, this is defined in terms of the de
Rham complexes of the fixed point sets. We obtain
a ring valued functor with the usual properties of a differential extension of a
cohomology theory. For proper submersions (with smooth fibers) we construct a
push-forward map in differential  orbifold $K$-theory. Finally, we construct a
non-degenerate intersection pairing with values in $\complexs/\integers$ for
the subclass of 
smooth orbifolds which can be written as global quotients by a finite group
action.  We construct a real subfunctor of our theory, where the
pairing restricts to a non-degenerate $\reals/\integers$-valued
pairing. Indeed, we use 
in that paper the language of (differentiable \'etale) stacks which turns out
to be particularly convenient. 

In \cite[Section 5.4]{Szabo-Valentino}, 
 a model for equivariant differential K-theory in the spirit of Section
 \ref{sec:def_via_classifying_maps} is constructed. It uses the fact that
 there are very nice models for the classifying space for equivariant
 K-theory. As target for the Chern character on equivariant K-theory
 \cite{Szabo-Valentino} uses Bredon cohomology with coefficients in the
 representation ring tensored with $\reals$. Via a de Rham isomorphism, this
 is canonically isomorphic to delocalized de Rham cohomology. As in the
 non-equivariant case, this model is not so well suited to the construction of
 a product structure and of push-forwards, which are therefore not discussed
 in \cite{Szabo-Valentino}. However, in \cite[Section 6]{Szabo-Valentino} is
 described how equivariant differential K-theory can be used to described
 Ramond-Ramond fields and their flux quantization condition in orbifolds of
 type II superstring theory. 

The preprint \cite{ortiz-2009} gives yet another construction of differential 
equivariant $K$-theory for finite group actions along the lines of
\cite{MR2192936}, i.e.~of Section \ref{sec:def_via_classifying_maps}. 
Moreover, it constructs a product and push-forward to a
point. The  
constructions are mainly homotopy theoretical.
 Ortiz in \cite{ortiz-2009} raises the interesting question
\cite[Conjecture 6.1]{ortiz-2009} of identifying his push-forward in
analytic terms. In the model of \cite{bunke-orbifold2009}, in view of the
geometric construction of the push-forward and the analytic nature of the
relations,  the conjectured relation 
 is essentially a tautology. See \cite[Corollary 5.5]{bunke-2007} for a more
 general statement in the non-equivariant case.  \cite[Conjecture
 6.1]{ortiz-2009}
 would be an immediate consequence of a theorem stating that any two models of
 equivariant differential $K$-theory for finite group actions are canonically
 isomorphic in a way  compatible with integration.  
 It seems to be plausible that the method of \cite{BSuniqueness} extends to the
 equivariant case. In \cite{ortiz-2009}, the conjectured equivariant index
 formula is proved in a number of special cases, e.g.~if $\Gamma$ is trivial,
 or in case the $G$-manifold is a $G$- boundary. 

\bigskip
\noindent
\textbf{Acknowledgements}. Thomas Schick was partially funded by the Courant
Research Center “Higher order structures in 
Mathematics” within the German initiative of excellence. 




{\small
\bibliographystyle{plain}
\bibliography{Differential_K}
}

\end{document}